\newcommand{\abs}[1]{\left|#1\right|}
\newcommand{\norm}[1]{\left\| #1 \right\|}
\newcommand{\A}{\mathcal{A}}
\newcommand{\xnice}{\mathbf{x}}
\newcommand{\Ho}{\mathcal{H}}
\newcommand{\CT}{\mathcal{C}}
\newcommand{\LT}{\mathcal{L}}
\newcommand{\fhat}{\hat{f}}
\newcommand{\Mtilde}{\tilde{M}}
\newcommand{\Fourier}{\mathcal{F}}
\newcommand{\R}{\mathbb{R}}
\newcommand{\C}{\mathbb{C}}
\newcommand{\N}{\mathbb{N}}
\DeclareMathOperator{\spt}{supp}
\newtheorem{theorem}{Theorem}[section]
\newtheorem{proposition}[theorem]{Proposition}
\newtheorem{corollary}[theorem]{Corollary}
\newtheorem{lemma}[theorem]{Lemma}
\theoremstyle{remark}
\newtheorem{remark}[theorem]{Remark}
\begin{document}

\title[A quantified Tauberian theorem and local decay of $C_0$-semigroups]{A quantified Tauberian theorem and local decay of $C_0$-semigroups}
\author[Reinhard Stahn]{Reinhard Stahn}

\begin{abstract} 
 We prove a quantified Tauberian theorem for functions under a new kind of Tauberian condition. In this condition we assume in particular that the Laplace transform of the considered function extends to a domain to the left of the imaginary axis, given in terms of an increasing function $M$ and is bounded at infinity within this domain in terms of a \emph{different} increasing function $K$. Our result generalizes \cite[Theorem 4.1]{BattyBorichevTomilov2016}. We also prove that the obtained decay rates are optimal for a very large class of functions $M$ and $K$. Finally we explain in detail how our main result improves known decay rates for the local energy of waves in odd-dimensional exterior domains.
\end{abstract}

\maketitle

{\let\thefootnote\relax\footnotetext{MSC2010: Primary 40E05. Secondary 47D06, 35B40.}}
{\let\thefootnote\relax\footnotetext{Keywords and phrases: Tauberian theorem, quantified, rates of decay, $C_0$-semigroups, local energy decay.}}


\section{Introduction}\label{sec: Introduction}
 In the last decade there has been much activity in the field of \emph{quantified} Tauberian theorems for functions of a real variable \cite{LiuRao2005,BatkaiEngelPruessSchnaubelt2006,ChillTomilov2007,BattyDuyckaerts2008,BorichevTomilov2010,Martinez2011,BattyChillTomilov2016,ChillSeifert2016,BattyBorichevTomilov2016}. See also \cite{Seifert2015,Seifert2016} and references therein for quantified Tauberian theorems on sequences and \cite{Hartlapp2017} for Dirichlet series. We refer to \cite{Korevaar} and \cite[Chapter 4]{ArendtBattyHieberNeubrander} for a general overview on Tauberian theory.
 
 Let $X$ be a Banach space and $f:\R_+\rightarrow X$ be a locally integrable function. For some continuous and increasing function $M:\R_+\rightarrow [2,\infty)$ let us define
 \begin{equation}\nonumber
  \Omega_M = \left\{ z\in \C; 0 > \Re z > - \frac{1}{M(\abs{\Im z})} \right\} .
 \end{equation}
 The above mentioned articles impose essentially the Tauberian condition that the function $f$ has a bounded derivative (in the weak sense), the Laplace transform $\fhat$ extends across the imaginary axis to $\Omega_M$ and it satisfies a growth condition, \emph{also expressed in terms of $M$}  in $\Omega_M$ at infinity. The decay rate (the rate of convergence to zero) is then determined in terms of $M$. For example, a polynomially growing $M$ gives a polynomial decay rate and an exponentially growing $M$ gives a logarithmic decay rate. In general $\fhat$ could also have a finite number of singularities on the imaginary axis \cite{Martinez2011}, but we are not interested in this situation in the present article.
 
 The pioneering works \cite{LiuRao2005,BatkaiEngelPruessSchnaubelt2006} focus on polynomial decay for orbits of $C_0$-semigroups. A generalization for functions (as formulated above) and to arbitrary decay rates was given in \cite{BattyDuyckaerts2008} for the first time. There the authors also improved the decay rates from \cite{LiuRao2005,BatkaiEngelPruessSchnaubelt2006}. In \cite{BorichevTomilov2010} it was shown that the results of \cite{BattyDuyckaerts2008} are optimal in the case of polynomial decay. We want to emphasize at this point that the main result of \cite{BattyDuyckaerts2008} for the special case of a truncated orbit of a unitary group $U$ of operators (i.e. $f(t)=P_2 U(t) P_1$ for some bounded operators $P_1, P_2$) were already obtained in the earlier article \cite{PopovVodev1999} \emph{with the same rate of decay}. Actually the authors only formulated a theorem on polynomial decay but in the retrospective it is not difficult to generalize their proof to arbitrary decay rates.
 
 A major contribution to the field of Tauberian theorems is the recent article \cite{BattyBorichevTomilov2016}. The authors extended the known Tauberian theorems to $L^p$-rates of decay. On the basis of a technique already applied in \cite{BorichevTomilov2010} the authors showed the optimality of their results in the case of polynomial decay. Another important observation, made in \cite{BattyBorichevTomilov2016}, concerns the above mentioned growth condition. In \cite{BattyDuyckaerts2008} it was assumed that the norm of $\fhat(z)$ is bounded by $M(\abs{\Im z})$ in $\Omega_M$. This condition was weakened in \cite{BorichevTomilov2010} in case of polynomial decay, and later in \cite{BattyBorichevTomilov2016} assuming merely that $\fhat(z)$ can be bounded by a polynomial in $(1+\abs{\Im z})M(\abs{\Im z})$. 

 The aim of the present article is to further generalize the growth condition on $\fhat$ in $\Omega_M$. That is, we introduce a second continuous and increasing function $K:\R_+\rightarrow [2,\infty)$ and assume that the norm of $\fhat(z)$ is bounded by $K(\abs{\Im z})$ in $\Omega_M$. The decay rate is then given in terms of $M$ and $K$.
 
 Let $M^{-1}$ denote the right-continuous right-inverse of $M$ given by $M^{-1}(t)=\sup\{s\geq0; M(s)=t\}$ for all $t\geq0$. Let
\begin{equation}\nonumber
 w_M(t) = 
 \begin{cases}
  M^{-1}(t) & \text{ if } t\geq M(1) \\
  1 & \text{ else.}
 \end{cases}
\end{equation}
 We are now ready to state our first main result, a generalization of \cite[Theorem 4.1]{BattyBorichevTomilov2016}.

\begin{theorem}\label{thm: M_K theorem - Laplace variant}
 Let $(X,\norm{\cdot})$ be a Banach space, $m\in\N$, and $f:\R_+\rightarrow X$ be a locally integrable function such that its $m$-th weak derivative $f^{(m)}$ is in $L^p(\R_+; X)$ for some $1< p\leq \infty$. Assume that there exist continuous and increasing functions $M, K:\R_+\rightarrow [2,\infty)$ satisfying
 \begin{enumerate}
  \item[(i)] $\forall s>1: K(s) \geq \max\{s, M(s)\}$,
  \item[(ii)] $\exists\varepsilon\in(0,1): K(s) = O\left(e^{e^{(sM(s))^{1-\varepsilon}}}\right)$ as $s\rightarrow\infty$.
 \end{enumerate}
 such that the Laplace transform $\fhat$ of $f$ extends analytically to $\Omega_M\cup\C_+$ and
 \begin{equation}\label{eq: fhat condition}
  \norm{\fhat(z)} \leq K(\abs{\Im z}) \text{ for all } z\in\Omega_M .
 \end{equation}
 Then there exists a constant $c_1>0$ such that
 \begin{equation}\label{eq: rate}
  \left(t\mapsto \norm{w_{M_K}(c_1 t)^m f(t)}\right) \in L^p(\R_+),
 \end{equation}
 where $M_K(s):=M(s)\log(K(s))$.
\end{theorem}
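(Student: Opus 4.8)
The plan is to adapt the proof of \cite[Theorem~4.1]{BattyBorichevTomilov2016} to the present weaker growth hypothesis on $\fhat$. As there, the architecture is: recover $f$ from $\fhat$ by an inversion integral along the imaginary axis; decompose the frequency variable $s\in\R$ into a low part $\abs s\le R(t)$ and a high part $\abs s>R(t)$ with the \emph{time-dependent} cut-off $R(t):=w_{M_K}(c_1 t)$; on the low-frequency part deform the contour into $\Omega_M$ to a depth $\asymp 1/M(\abs s)$, gaining a factor $\exp(-ct/M(\abs s))$ from $\abs{e^{zt}}$; and estimate the high-frequency part using only $f^{(m)}\in L^p$. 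The reason for this choice of $R(t)$ is that $R(t)=M_K^{-1}(c_1 t)$ is precisely the frequency at which the contour-shift gain stops beating the growth of $\fhat$: for $\abs s\le R(t)$ one has $M(\abs s)\log K(\abs s)\le c_1 t$, hence $\exp(-ct/M(\abs s))\le K(\abs s)^{-c/c_1}$.

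I would start with the passage to $f^{(m)}$. Since $f^{(m)}\in L^p(\R_+;X)$, the derivatives $f,\dots,f^{(m-1)}$ are absolutely continuous, the boundary values $f^{(k)}(0^+)$ exist, and $\widehat{f^{(m)}}(z)=z^m\fhat(z)-q(z)$ for a polynomial $q$ of degree $<m$. Using (i), in particular $K(s)\ge s$, this yields $\norm{\widehat{f^{(m)}}(z)}\lesssim K(\abs{\Im z})^{m+1}$ on $\Omega_M$; replacing $K$ by a fixed power of itself replaces $M_K$ by a constant multiple, which is absorbed into $c_1$ (using that (i)--(ii) force enough regularity of $w_{M_K}$, e.g.\ $w_{M_K}(ct)\asymp_c w_{M_K}(t)$). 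Boundedness of $\fhat$ near $z=0$ forces $\fhat$ to be analytic across the origin, so no genuine polynomial-in-$t$ component of $f$ occurs; the rational terms $z^{-(k+1)}f^{(k)}(0^+)$ of $\fhat$ are kept track of separately and cancel, as in \cite{BattyBorichevTomilov2016}. After the standard regularization of the inversion integral (splitting off a fixed smooth frequency-localized piece near $s=0$), the task becomes to bound, in $L^p(\R_+)$, the function
\begin{equation}\nonumber
 t\;\longmapsto\; w_{M_K}(c_1 t)^m\!\int_\R e^{ist}\,\fhat(is)\,\varphi_{R(t)}(s)\,ds \;+\; \bigl(\text{high-frequency remainder}\bigr),
\end{equation}
where $\varphi_{R(t)}$ is a smooth cut-off to $\abs s\lesssim R(t)$.

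For the low-frequency part, Cauchy's theorem lets one move the contour from $i\R$ to $\{-\delta(s)+is:s\in\R\}\subset\Omega_M$ with $\delta(s)\asymp 1/M(\abs s)$; this is legitimate thanks to the decay of $\fhat$ at infinity on such contours, and the vertical connecting segments near $\abs s\asymp R(t)$ are estimated routinely. On the shifted contour $\abs{e^{zt}}\le e^{-ct/M(\abs s)}$, so the contribution is bounded in norm by $w_{M_K}(c_1 t)^m\int_{\abs s\lesssim R(t)} e^{-ct/M(\abs s)}(1+\abs s)^{-m}K(\abs s)^{m+1}\,ds$, and by the defining property of $R(t)$ the integrand is at most $(1+\abs s)^{-m}K(\abs s)^{m+1-c/c_1}$, a negative power of $K$ once $c_1$ is small relative to $c$. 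Condition (ii), a ceiling on the growth of $K$, is exactly what guarantees that this integral converges and that, after the weight $w_{M_K}(c_1 t)^m=R(t)^m$ is carried through (the same bookkeeping as in \cite{BattyBorichevTomilov2016}), the resulting function of $t$ lies in $L^p(\R_+)$.

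The high-frequency part, frequencies $\abs s>R(t)=w_{M_K}(c_1 t)$, is where the main obstacle lies. Here the contour stays on $i\R$; writing $\fhat(is)=(is)^{-m}(\widehat{f^{(m)}}(is)+q(is))$ and noting $w_{M_K}(c_1 t)^m\abs s^{-m}\le 1$ on $\{\abs s>R(t)\}$, one is reduced to showing that the \emph{time-dependent} Fourier multiplier with symbol $m_t(s)=w_{M_K}(c_1 t)^m(is)^{-m}\mathbf 1_{\abs s>R(t)}$, of modulus $\le 1$, maps $f^{(m)}$ into $L^p(\R_+;X)$ with norm $\lesssim\norm{f^{(m)}}_{L^p}$ (plus the separately-treated $q$-terms). \textbf{This $L^p$-boundedness of a parameter-dependent cut-off multiplier is the heart of the argument.} For $p=2$ it is reachable via Plancherel, but the dependence of the symbol on $t$ still forces work, e.g.\ a dyadic-in-$t$ decomposition together with the regularity of $w_{M_K}$; for general $1<p\le\infty$ it requires the real-variable kernel estimates of \cite[Section~4]{BattyBorichevTomilov2016}. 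Combining this with the low-frequency bound and collecting the separately-estimated boundary and remainder terms gives \eqref{eq: rate}.
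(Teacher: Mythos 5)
Your proposal takes a genuinely different route from the paper. The paper does not touch contour deformation at all: it proves the \emph{Fourier} variant (Theorem \ref{thm: M_K theorem - Fourier variant}) by the Chill--Seifert method---a convolution decomposition $f = (\delta - \phi_R)^{*m} * f - [(\delta - \phi_R)^{*m} - \delta] * f$ with a kernel $\phi = \Fourier^{-1}\psi$, $\psi \in C_c^\infty$ built via the Denjoy--Carleman theorem, plus a Poisson-kernel/Carleson-measure bound for the first piece and $N$-fold integration by parts for the second---and then transfers to the Laplace variant via Lemma \ref{lem: Fourier vs Laplace}. You instead propose the direct Laplace inversion with a time-dependent frequency cut-off and a contour shift to depth $\asymp 1/M(\abs s)$, i.e.\ the architecture of \cite{BattyBorichevTomilov2016}. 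That architecture could in principle be pushed further, but your sketch has concrete gaps precisely where the paper's technique is doing the work.

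First, the boundary of the shifted contour is not ``routine.'' If $\varphi_{R(t)}$ is a genuine smooth cut-off in the real variable $s$, then $\fhat(z)\varphi_{R(t)}(\Im z)$ is not analytic in $z$ and Cauchy's theorem does not apply; if instead you close off a rectangular contour with vertical segments at $\Im z = \pm R(t)$, then on those segments $\norm{\fhat(z)} \le K(R(t))$ while the exponential gives only $\int_{-c/M(R(t))}^0 e^{\delta t}\,d\delta \lesssim 1/t$, so the connecting pieces contribute $\asymp K(R(t))/t$. Multiplying by the weight $R(t)^m$ and using $t \asymp M(R(t))\log K(R(t))/c_1$ leaves $R(t)^m K(R(t))/\bigl(M(R(t))\log K(R(t))\bigr)$, which diverges as soon as $K$ grows superpolynomially---exactly the regime this theorem is about. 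To beat this one needs a Newman--Korevaar-type kernel on a curved contour (the ``clever choice of contours'' the paper explicitly avoids), not a routine estimate.

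Second, your account of where condition (ii) enters is not right. A \emph{ceiling} on $K$ makes $\int K^{-\alpha}$ converge more slowly, not faster, so (ii) cannot be ``exactly what guarantees convergence'' of your low-frequency integral; that integral converges for any $K$ satisfying (i). In the paper, (ii) is a Denjoy--Carleman admissibility condition: it is what allows a compactly supported $\psi$ with $\sup\abs{\psi^{(j)}} \le C^{j+1}(j\log(2+j)^{1+\varepsilon})^j$ to exist (Section \ref{sec: remark on condition ii} of the paper discusses why $\varepsilon = 0$ is impossible), and it surfaces in the $J_2$ estimate when $N \asymp \log K(R)/c_1$ integrations by parts produce the factor $\sum_j \bigl(\log(2+N)^{1+\varepsilon}/(RM(R))\bigr)^j$. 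In a contour approach the analogous constraint would surface in the smoothness/quasi-analyticity of the cut-off or kernel you use near $\abs s = R(t)$, i.e.\ precisely in the step you have left unaddressed.

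Third, the step you correctly flag as ``the heart of the argument''---the high-frequency multiplier $m_t(s) = R(t)^m(is)^{-m}\mathbf 1_{\abs s > R(t)}$---is genuinely problematic as stated: a sharp cut-off multiplier is not bounded on $L^\infty$, which is the main case ($p = \infty$) of interest; and for $1 < p < \infty$ the $t$-dependence is not just ``forcing work,'' it is where the Carleson-measure argument (or some substitute) must enter. The paper handles this uniformly in $p$ with Lemma \ref{lem: J1 estimate} (Poisson-kernel domination of $J_1$) plus a Carleson measure along the curve $t \mapsto (t, 1/w_{M_K}(c_1 t))$, which your sketch does not replicate.

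So: a valid alternative strategy in outline, but as written it defers all the real difficulties (contour boundary, role of (ii), the $t$-dependent multiplier/cut-off) and misidentifies why (ii) is needed. The paper's Fourier-side proof is designed specifically to avoid the contour subtleties that you label routine.
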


 \begin{remark}\label{rem: fhat well-defined}
  Note that a function $f\in L^1_{loc}(\R_+; X)$ with $f^{(m)}\in L^p(\R_+;X)$ is polynomially bounded. In fact, $\norm{f(t)}\leq C (1+t)^{m-1/p}$ holds for all $t\geq0$. In particular the Laplace transform of $f$ is well-defined in the interior of $\C_+$ as an absolutely convergent integral.
 \end{remark}

 \begin{remark}
  One can drop condition (i) on $K$ but then one has to replace $M_K$ by the function given by $M(s)\log((2+s)M(s)K(s))$. 
 \end{remark}
 
 \begin{remark}
  We are not able to prove the theorem for $\varepsilon=0$ in condition (ii). In Section \ref{sec: remark on condition ii} the reader can find a short discussion on a slightly weaker constraint on $K$.
 \end{remark}

 If we replace $K(\abs{\Im z})$ in (\ref{eq: fhat condition}) by $((1+\abs{\Im z})M(\abs{\Im z}))^\alpha$ for some $\alpha>0$ and set $m=1$ we recover \cite[Theorem 4.1]{BattyBorichevTomilov2016}. Our theorem applies perfectly to local energy decay of waves in odd-dimensional exterior domains. Here $f$ is typically a spatially truncated orbit of a solution to the wave equation and one is often confronted with the situation that $M$ is constant and $K$ is asymptotically larger than any polynomial. In this situation no known Tauberian result applies directly. One might guess that one can apply the Phragm\'en-Lindel\"of principle to get a better estimate on $\fhat$ on a smaller domain to the left of the imaginary axis. Indeed this works, and as shown in \cite{BonyPetkov2006} one can apply known Tauberian theorems after this procedure. However in Section \ref{sec: Applications: decay of waves} we discuss the application to local decay of waves in exterior domains in detail and show that this procedure yields a weaker estimate than a direct application of Theorem \ref{thm: M_K theorem - Laplace variant}. 
 
We prove Theorem \ref{thm: M_K theorem - Laplace variant} as a corollary to the following variant which is a generalization of \cite[Theorem 2.1(b)]{ChillSeifert2016}:
\begin{theorem}\label{thm: M_K theorem - Fourier variant}
 Let $(X,\norm{\cdot})$ be a Banach space, $m\in\N$, and $f:\R_+\rightarrow X$ be a locally integrable function such that $f^{(m)}\in L^p(\R_+; X)$ for some $1< p\leq \infty$. Let $M$ and $K$ be as in Theorem \ref{thm: M_K theorem - Laplace variant}. Assume that the Fourier transform $F$ of $f$ is of class $C^{\infty}$ and its derivatives satisfy for all $j\in\N_0$
 \begin{equation}\label{eq: F condition}
  \norm{F^{(j)}(s)} \leq j!K(\abs{s}) M(\abs{s})^j \text{ for all } s\in\R .
 \end{equation}
 Then there exists a constant $c_1>0$ such that
 \begin{equation}\label{eq: rate - Fourier variant}
  \left(t\mapsto \norm{w_{M_K}(c_1 t)^m f(t)}\right) \in L^p(\R_+),
 \end{equation}
 where $M_K(s):=M(s)\log(K(s))$.
\end{theorem}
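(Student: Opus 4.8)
The plan is to prove Theorem~\ref{thm: M_K theorem - Fourier variant} by estimating $f$ after decomposing it in frequency at a sequence of scales tied to a dyadic decomposition of the time axis, treating each frequency band by one of two mechanisms according to whether the elapsed time is short or long relative to that band. First we record, from Remark~\ref{rem: fhat well-defined}, the a priori bound $\norm{f(t)}\leq C(1+t)^{m-1/p}$, so that $F=\Fourier(f\mathbf{1}_{\R_+})$ is a well-defined tempered distribution which by hypothesis is of class $C^\infty$; we also fix the algebraic identity $\Fourier(f^{(m)}\mathbf{1}_{\R_+})(s)=(is)^m F(s)-P(is)$, where $P$ is a polynomial of degree at most $m-1$ whose coefficients are the one-sided traces $f^{(k)}(0^+)$. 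Finally, by Taylor's theorem the bound~\eqref{eq: F condition} is, up to absolute constants, equivalent to the statement that $F$ extends holomorphically to the strip-like region $\{z:\abs{\Im z}<1/(2M(\abs{\Re z}))\}$ with $\norm{F(z)}\lesssim K(\abs{\Re z})$ there; this is the Fourier-side counterpart of the hypothesis of Theorem~\ref{thm: M_K theorem - Laplace variant}, and it is what we actually use.

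For the \emph{high-frequency regime} --- the contribution of frequencies $\abs{s}\gtrsim\rho$ to $f$ on a time interval where the weight $w_{M_K}(c_1 t)^m$ has size $\approx\rho^m$, so that $\rho\approx w_{M_K}(c_1 t)$ --- one divides by $(is)^m$. Writing the relevant piece as the Fourier multiplier with (smoothed) symbol $\mathbf{1}_{\abs{s}\gtrsim\rho}(s)(is)^{-m}$ applied to $f^{(m)}\mathbf{1}_{\R_+}$, plus the contribution of the rational function $-(is)^{-m}P(is)$, one checks that this symbol is a Mikhlin symbol of size $\lesssim\rho^{-m}$, so that the associated operator is bounded on $L^p(\R)$ for $1<p<\infty$ (the endpoint $p=\infty$ requiring a minor modification that uses $m\geq1$) with norm $\lesssim\rho^{-m}$, and that its kernel decays rapidly on the scale $\rho^{-1}\leq 1/2$, so that the operator is essentially local in time. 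Hence the weight restores exactly the lost power, and summing $p$-th powers over the dyadic time blocks leaves only $\lesssim\norm{f^{(m)}}_{L^p(\R_+;X)}$. Condition~(i), $K(s)\geq\max\{s,M(s)\}$, is what lets one bound $\rho$ --- and, below, various other polynomial factors --- by $K(\rho)$. The rational piece $(is)^{-m}P(is)$ is holomorphic away from $0$ and is disposed of by the contour shift used next, with room to spare.

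For the \emph{low-frequency regime} --- the contribution of a band at scale $\rho$ at times $t$ that are large compared with $M_K(\rho)$ --- one takes a Gevrey-class cutoff $\varphi$ localizing to frequencies $\sim\rho$, passes to an almost-analytic extension $\tilde\varphi$ of $\varphi$ into the strip $0\leq\Im z\leq\sigma$ with $\sigma$ of the order of the strip width available at scale $\rho$, shifts the contour in $\tfrac{1}{2\pi}\int\varphi(s)F(s)e^{ist}\,ds$ upward by $\sigma$, and estimates the result together with the $\bar\partial$-error. On the shifted contour $\norm{F}\lesssim K(\rho)$ and $\abs{e^{izt}}=e^{-\sigma t}$, which yields a main term of size $\lesssim\rho\,K(\rho)^{1-ct/M_K(\rho)}$ (since $\sigma t\approx t\log K(\rho)/M_K(\rho)$); using $\rho\leq K(\rho)$, $\log K\leq K^{1/2}$ for large arguments, and $M\leq K$ when one integrates, this can be summed against the weight and over the dyadic blocks. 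The $\bar\partial$-error is $\lesssim\rho\,K(\rho)\int_0^\sigma\abs{\bar\partial\tilde\varphi(x+iy)}\,e^{-yt}\,dy$ with $\abs{\bar\partial\tilde\varphi}\lesssim\exp(-c(\rho/y)^{1/(\kappa-1)})$ for the Gevrey order $\kappa>1$; optimizing first $y$ and then $\kappa$ (the optimal choice being $\kappa-1\approx1/\log(\rho t)$) bounds it by $\rho\,K(\rho)\exp(-c\,\rho t/\log(\rho t))$, and this is negligible exactly when $\rho t/\log(\rho t)\gtrsim\log K(\rho)$; since $t\gtrsim M_K(\rho)=M(\rho)\log K(\rho)$ on the relevant range, this reduces, up to logarithmic factors, to $\rho M(\rho)\gtrsim\log\log K(\rho)$ --- which is precisely what condition~(ii) supplies, the $\varepsilon>0$ providing the slack that absorbs those logarithmic factors.

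The step we expect to be the main obstacle is exactly this last one, and more broadly the bookkeeping of the cutoff errors and of the transition between the two regimes: everything hinges on a \emph{simultaneous} choice of the dyadic frequency scales, the strip widths $\sigma$, the Gevrey orders $\kappa$, and the final constant $c_1$ such that the multiplier loss $\rho^{-m}$ in the high-frequency regime is exactly the power the weight restores, the contour-shift terms carry enough decay to be summed against that same weight, and the almost-analytic-extension error is dominated by the assumed bound on $K$; reconciling these requirements at the frequency bands that are neither clearly ``high'' nor clearly ``low'' at a given time is where the argument must be run carefully. Once Theorem~\ref{thm: M_K theorem - Fourier variant} is established, Theorem~\ref{thm: M_K theorem - Laplace variant} follows: the holomorphic extension of $\fhat$ to $\Omega_M\cup\C_+$ with the bound~\eqref{eq: fhat condition}, together with Cauchy's integral formula on circles of radius $\approx1/M(\abs{\Im z})$, yields precisely the derivative bounds~\eqref{eq: F condition} for $F=\Fourier(f\mathbf{1}_{\R_+})$, the polynomial boundedness of $f$ from Remark~\ref{rem: fhat well-defined} allowing one to identify $F$ with the boundary values of $\fhat$.
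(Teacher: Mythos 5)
Your outline and the paper's proof share the same two-regime skeleton — one term controlled only through $f^{(m)}\in L^p$, the other only through the Fourier/Laplace bound, matched at the scale $\rho\approx w_{M_K}(c_1 t)$ — but they diverge in both halves. For the high-frequency half the paper writes $(\delta-\phi_R)^{*m}*f = (-1/R)^m\Phi_R^{*m}*f^{(m)}$, dominates this pointwise by a Poisson integral $R^{-m}P_{1/R}*\|f^{(m)}\|$, and gets the $L^p$ bound along the varying curve $y=1/R(t)$ from the Carleson-measure theorem in one stroke; you instead propose Mikhlin multipliers plus approximate time-locality and a dyadic discretization of $t$. That is a plausible substitute, though the ``essentially local in time'' summation over dyadic blocks is more delicate than it looks (the weight changes across a block, and the kernel tails are only rapidly decaying, not compactly supported) and is precisely what the Carleson argument is designed to handle cleanly. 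For the low-frequency half the paper keeps a \emph{single} cutoff $\psi\in C_c^\infty$ subject to the Denjoy--Carleman bound $\|\psi^{(j)}\|_\infty\le C_1^{j+1}\bigl(j\log(2+j)^{1+\varepsilon}\bigr)^j$ and integrates by parts $N\approx t/(C_2 M(R))$ times; you propose a family of Gevrey-$\kappa$ cutoffs with $\kappa=\kappa(\rho,t)$, an almost-analytic extension, a contour shift, and a $\bar\partial$-error estimate.

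The genuine gap is in that last step. You write $\abs{\bar\partial\tilde\varphi(x+iy)}\lesssim\exp\bigl(-c(\rho/y)^{1/(\kappa-1)}\bigr)$ with $c$ treated as universal, and then optimize $\kappa-1\approx 1/\log(\rho t)$, so that $\kappa\to 1^+$ along the relevant regime. But for a compactly supported Gevrey-$\kappa$ bump on an interval of length $\sim\rho$, the admissible Gevrey constant $C_0(\kappa)$ in $\|\varphi^{(j)}\|_\infty\le C_0(\kappa)^{j+1}(j!)^{\kappa}$ necessarily blows up as $\kappa\downarrow1$ (otherwise one would obtain a compactly supported analytic function), and this constant enters the $\bar\partial$-bound: the correct estimate has the form $\exp\bigl(-c_\kappa(\rho/y)^{1/(\kappa-1)}\bigr)$ with $c_\kappa\to 0$ as $\kappa\downarrow 1$. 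Since your optimal $\kappa$ depends on $\rho$ and $t$ and tends to $1$, the degeneration of $c_\kappa$ competes with the gain you record, and your headline bound $\rho K(\rho)\exp(-c\,\rho t/\log(\rho t))$ is not justified as stated. This is exactly the knife-edge the Denjoy--Carleman theorem governs, and the paper resolves it differently: it fixes one cutoff in a single non-quasi-analytic class that is as close to quasi-analytic as possible (the $\log(2+j)^{1+\varepsilon}$ correction), runs the integration-by-parts sum $\sum_j\bigl(C_3\log(2+N)^{1+\varepsilon}/(RM(R))\bigr)^j$, and uses the $\varepsilon$-slack in condition (ii) to make that geometric series converge — so all $\kappa$-dependent constants are replaced by one fixed admissible sequence $(A_j)$. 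To make your route go through you would need to prove a quantitative almost-analytic-extension lemma with explicit, uniform dependence of $c_\kappa$ on $\kappa$, and then redo the optimization over $\kappa$ carrying that dependence; you have correctly identified this as the main obstacle, but as written the step does not hold. (A more minor point: your claimed equivalence via Taylor's theorem between \eqref{eq: F condition} and holomorphy of $F$ in $\{|\Im z|<1/(2M(|\Re z|))\}$ with bound $K(|\Re z|)$ loses information on the way back — recovering \eqref{eq: F condition} from the extension requires Cauchy estimates on circles and shifts the arguments of $M$ and $K$, which is exactly what Lemma \ref{lem: Fourier vs Laplace}(b) in the paper records.)
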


\begin{remark}
 Note that the Fourier transform of $f$ is well-defined in the sense of tempered distributions since $f$ is polynomially bounded (compare with Remark \ref{rem: fhat well-defined}).
\end{remark}

A theorem of this type (for $p=\infty$, $m=1$ and $K=M$) was formulated for the first time in \cite{ChillSeifert2016}. A main contribution of the authors was also to provide a new and easier to understand technique - on the basis of Ingham's original proof of the unquantified version \cite{Ingham1935} - for proving Tauberian theorems. For example in \cite{BattyDuyckaerts2008} and \cite{BattyBorichevTomilov2016} one main difficulty is to choose contours for integration in the complex plane in a clever way. In \cite{ChillSeifert2016} the authors avoid this technicality by considering the derivatives of the Fourier transform of $f$ instead of the Laplace transform.

To prove Theorem \ref{thm: M_K theorem - Fourier variant} we adapt the proof of \cite[Theorem 2.1(b)]{ChillSeifert2016}. That is - for $m=1$ - we decompose $f=[f-\phi_R*f]+\phi_R*f=J_1+J_2$ into two terms with the help of some suitably chosen and scaled convolution kernel $\phi_R(t)=R\phi(Rt)$ with $\int_{\R} \phi(t) dt=1$. Then we estimate the $X$-norm of $J_1(t,R)$ and $J_2(t,R)$ in terms of $R$ and $t$, solely assuming $f'\in L^p$ respectively the bounds on all derivatives $F^{(j)}$. Finally we optimize the sum of these two estimates by choosing $R=w_{M_K}(c_1t)$ for a sufficiently small $c_1$.

We improve the techniques of \cite{ChillSeifert2016} in the following way: We estimate $J_1(t,R)$ from above by a Poisson integral $R^{-1}P_{R^{-1}}*\norm{f'}(t)$ which makes it possible to apply a fundamental result on Carleson measures. We note that this technique was already applied in \cite{BattyBorichevTomilov2016}. 
Compared to the proof in \cite{ChillSeifert2016} we get a better estimate on $J_2(t,R)$ by choosing a better convolution kernel $\phi$. Also the Fourier transform $\psi$ of our convolution kernel is a $C_c^{\infty}$-function which simplifies the prove slightly. Our choice of $\psi$ is based on the Denjoy-Carleman theorem on quasi-analytic functions.

The paper is organized as follows. In Section \ref{sec: Proof of Theorem Fourier} and \ref{sec: proof of Theorem Laplace} we prove Theorem \ref{thm: M_K theorem - Fourier variant} and \ref{thm: M_K theorem - Laplace variant}, respectively. In Section \ref{sec: Optimality} we prove the optimality of Theorem \ref{thm: M_K theorem - Laplace variant} for a very large class of possible choices of $M$ and $K$. This is even new in the case where $K=M$. To prove the optimality we make a similar construction as in \cite{BorichevTomilov2010}. As a side product this construction also shows that there actually exist functions $f$ satisfying the hypotheses of Theorem \ref{thm: M_K theorem - Laplace variant} for $K$ increasing faster than any polynomial in $sM(s)$, but do not satisfy (\ref{eq: fhat condition}) if one replaces $K$ by a polynomial in $sM(s)$ (see Remark \ref{rem: YY}). This proves that Theorem \ref{thm: M_K theorem - Laplace variant} is a \emph{proper} generalization of \cite[Theorem 4.1]{BattyBorichevTomilov2016}. A short discussion on the optimal choice of $c_1$ in (\ref{eq: rate}) is included in Subsection \ref{sec: Optimality of c1}. In Subsection \ref{sec: Decay of C0-semigroups} we explain how to get local decay rates for $C_0$-semigroups from our results. Finally in Subsection \ref{sec: Local energy decay} we apply this to local energy decay of waves in odd-dimensional exterior domains.

\subsection{Notation}
We denote $\R_+=[0,\infty)$ and $\C_+=\{z\in\C; \Re z \geq 0\}$. By $\N_0$ we denote the natural numbers including $0$. For $m\in\N_0$ we define $\N_m$ to be the natural numbers greater or equal to $m$. By $C$ we denote a strictly positive constant which may change implicitly their value from line to line. Every statement in this article which includes $C$ remains true if one replaces $C$ by a larger constant. Other strictly positive constants, having the names $C_1, C_2,\ldots$ are not allowed to change their values - except it is explicitly stated. Analogously $c,c_1,c_2,\ldots$ are strictly positive constants which might be replaced by smaller constants without invalidating any statement in our article. We say that a function $\phi:\R\rightarrow\R$ decays \emph{rapidly} if for any $n\in\N_0$ there exists a constant $C$ such that $\abs{\phi(t)}\leq C(1+t)^{-n}$.


\section{Proof of Theorem \ref{thm: M_K theorem - Fourier variant}}\label{sec: Proof of Theorem Fourier}
Without loss of generality we may assume that $f(0)=f'(0)=\ldots=f^{(m-1)}(0)=0$. If this was not satisfied we could replace $f$ by $f-g$ for some function $g\in C_c^m([0,t_1); X)$ with $g(0)=f(0), \ldots, g^{(m-1)}(0)=f^{(m-1)}(0)$ and $t_1>0$ arbitrary. This neither changes the asymptotics of $f$ at infinity nor does it change the growth of $F$ and its derivatives at infinity considerably. To see this note that the Fourier transform $G$ of $g$ satisfies 
\begin{equation}\nonumber
 \norm{G^{(j)}(s)}\leq t_1^{j+1}\norm{g}_{\infty} \text{ for } j\in\N_0 \text{ and } s \in \R .
\end{equation}
Now let us extend $f$ by zero on the negative numbers. By our additional assumptions we see that the extended function is $(m-1)$-times continuously differentiable on the whole real line and $f^{(m)}\in L^p(\R;X)$.

Let $\psi \in C_c^{\infty}(\R)$ with $\spt\psi\subseteq[-1,1]$ and $\psi(0)=1$ be a function to be fixed later in the proof. Let
\begin{equation}\nonumber
 \phi(t) = \Fourier^{-1}\psi (t) = \frac{1}{2\pi} \int_{-\infty}^{\infty} e^{ist} \psi(s) ds 
\end{equation}
be its inverse Fourier transform. Note that $\phi$ is a Schwartz function with $\int_{-\infty}^{\infty}\phi dt=\psi(0)=1$. For $R>0$ let $\phi_R(t)=R\phi(Rt)$ and $\psi_R(s)=\psi(s/R)$. Let us decompose
\begin{align*}
 f(t) &= (\delta-\phi_R)^{*m}*f(t) - [(\delta-\phi_R)^{*m} - \delta]*f(t) \\
 &= \left[ \sum_{j=0}^m \binom{m}{j}(-1)^j \phi^{*j}_R*f \right](t) - \left[ \sum_{j=1}^m \binom{m}{j}(-1)^j \phi^{*j}_R*f \right](t) \\
 &=: J_1(t,R) + J_2(t,R).
\end{align*}
Here by $\phi^{*j}$ we denote the $j$-times convolution of $\phi$ with itself. We also define $\phi^{*0}=\delta$ (delta-function). Note that $(\phi_R)^{*j}=(\phi^{*j})_R$.

\subsection{Estimation of $J_1$}
Let us define the Poisson kernel by
\begin{equation}\nonumber
 P_y(t) = \frac{1}{\pi}\cdot \frac{y}{t^2+y^2}.
\end{equation}
 Recall that by Young's inequality the Poisson kernel acts as a continuous operator on $L^p(\R)$ via convolution.
\begin{lemma}\label{lem: J1 estimate}
 Let $1\leq p\leq\infty$ and $m\in\N_1$. Let $f:\R\rightarrow X$ be a locally integrable function such that $f^{(m)}\in L^p(\R; X)$. Let $\phi$ be as above. Then there exists a constant $C>0$ (only depending on $\phi$ and $m$) such that
 \begin{equation}
  \norm{(\delta-\phi_R)^{*m}*f(t)} \leq \frac{C}{R^m} P_{\frac{1}{R}}*\norm{f^{(m)}}(t)
 \end{equation}
 holds for all $t\geq0$ and $R>0$.
\end{lemma}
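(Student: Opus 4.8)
The plan is to write $(\delta-\phi_R)^{*m}*f$ explicitly in terms of $f^{(m)}$ by means of the elementary identity expressing an iterated finite difference as an iterated integral of a derivative, and then to dominate the resulting (vector-valued) convolution pointwise by a rescaled Poisson integral. First I would expand $(\delta-\phi_R)^{*m}=\sum_{j=0}^m\binom{m}{j}(-1)^j(\phi^{*j})_R$ and unwind the convolutions to obtain
\[
 (\delta-\phi_R)^{*m}*f(t)=\int_{\R^m}\phi_R(v_1)\cdots\phi_R(v_m)\sum_{S\subseteq\{1,\dots,m\}}(-1)^{|S|}f\Big(t-\sum_{i\in S}v_i\Big)\,dv_1\cdots dv_m ,
\]
all integrals being absolutely convergent since $\phi$ is Schwartz and $f$ is polynomially bounded (Remark \ref{rem: fhat well-defined}). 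Iterating the identity $g(t)-g(t-v)=\int_0^v g'(t-r)\,dr$, applied to the absolutely continuous representative of a weakly differentiable $g$, turns the alternating sum into $\int_0^{v_1}\cdots\int_0^{v_m}f^{(m)}\big(t-\sum_i r_i\big)\,dr_1\cdots dr_m$, where for $v_i<0$ the symbol $\int_0^{v_i}$ carries the reversed orientation.

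Next I would take norms inside, apply Tonelli, and interchange the order of integration so that, for fixed $r_1,\dots,r_m$, each $v_i$ runs over $\{\,|v_i|\geq|r_i|\,\}$. Setting $\Phi(x):=\int_{|v|\geq x}|\phi(v)|\,dv$ and $\Phi_R(r):=\Phi(R|r|)$, the substitution $u=r_1+\cdots+r_m$ then gives
\[
 \norm{(\delta-\phi_R)^{*m}*f(t)}\leq\int_{\R^m}\Phi_R(r_1)\cdots\Phi_R(r_m)\,\norm{f^{(m)}\Big(t-\sum_i r_i\Big)}\,dr=\big(\Phi_R^{*m}*\norm{f^{(m)}}\big)(t).
\]
Since $\phi$ decays rapidly, so does $\Phi$; in particular $\Phi(x)\leq C(1+x^2)^{-1}$ with $C=C(\phi)$, hence $\Phi_R(r)\leq\frac{C}{R}P_{1/R}(r)$ pointwise. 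Convolving this pointwise bound $m$ times and using the semigroup property $P_{1/R}^{*m}=P_{m/R}$ together with the elementary estimate $P_{m/R}\leq m\,P_{1/R}$ yields $\Phi_R^{*m}\leq\frac{mC^m}{R^m}P_{1/R}$, which is the assertion, with a constant depending only on $\phi$ and $m$.

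All of this is routine; the only delicate point is the finite-difference identity, and especially keeping track of the orientation of $\int_0^{v_i}$ when $v_i<0$, so that passing to absolute values produces exactly the truncation $\{\,|v_i|\geq|r_i|\,\}$ and nothing larger. One should also note that the identities above hold only for almost every $t$, but both sides of the claimed inequality are continuous in $t$ (the left-hand side because $\phi$ is Schwartz, the right-hand side because $P_{1/R}$ lies in every $L^q(\R)$), so the inequality extends to all $t\geq0$ once the continuous representative of $f$ is fixed. A Fourier-multiplier proof is also available, writing the left-hand side as a convolution of $f^{(m)}$ with $\Fourier^{-1}$ of the symbol $((1-\psi_R(s))/(is))^m$ and estimating that kernel; but this symbol fails to be integrable when $m=1$, so the finite-difference argument is cleaner.
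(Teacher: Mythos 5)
Your argument is correct, and it takes a genuinely different route from the paper's, though the two are close cousins. The paper derives the \emph{exact} identity $(\delta-\phi_R)^{*m}*f=(-1/R)^m\,\Phi_R^{*m}*f^{(m)}$, where $\Phi$ is the signed, piecewise antiderivative of $\phi$ defined in \eqref{eq: Phi} (which satisfies $\Phi'=\phi-\delta$ distributionally); this is proved for $m=1$ by an integration by parts split at $\tau=0$ to kill the boundary term coming from the jump of $\Phi$, and then extended to general $m$ by induction. Only afterwards does the rapid decay of $\Phi^{*m}$ get used to dominate by the Poisson kernel. You instead work entirely with \emph{inequalities}: you expand the $m$-fold finite difference via the divided-difference identity, take norms and apply Tonelli, and arrive at the pointwise majorant $\Phi_R^{*m}*\norm{f^{(m)}}$ with $\Phi(x)=\int_{\abs{v}\ge x}\abs{\phi}\,dv$ the two-sided tail, after which you invoke the Poisson semigroup property $P_{1/R}^{*m}=P_{m/R}$ together with $P_{my}\le m\,P_y$. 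Your route avoids the integration-by-parts subtleties around $\tau=0$ (at the cost of a slightly lossy bound, which is immaterial here), and it makes the appearance of the Poisson kernel transparent; the paper's route has the aesthetic advantage of an exact convolution identity. One small imprecision in your write-up: after Tonelli the $v_i$-integration is actually over the half-line $\{\mathrm{sgn}(v_i)=\mathrm{sgn}(r_i),\ \abs{v_i}\ge\abs{r_i}\}$ rather than all of $\{\abs{v_i}\ge\abs{r_i}\}$, so the tail $\Phi(R\abs{r_i})$ is a strict overcount; this only helps the inequality, but ``exactly the truncation $\{\abs{v_i}\ge\abs{r_i}\}$'' is not quite the right phrasing.
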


\begin{remark}
 It is clear from the proof that in the statement of the lemma one can replace $P=P_1$ by any positive and integrable kernel bounded from below by $c(1+t)^{-\alpha}$ for some $\alpha>1$. We then define $P_y(t)=y^{-1}P(y^{-1}t)$. Unfortunately this is not consistent with the definition of $\phi_R$, but for the Carleson measure argument below it is more convenient to define $P_y$ as above.
\end{remark}

\begin{proof}
 Let us define two antiderivatives of $\phi$
 \begin{equation}\nonumber
  \Phi_-(t) = \int_{-\infty}^{t}\phi(\tau)d\tau, \quad \Phi_+(t) = -\int_{t}^{\infty}\phi(\tau)d\tau .
 \end{equation}
 Furthermore we define the following auxiliary function
 \begin{equation}\label{eq: Phi}
  \Phi(t) = 
  \begin{cases}
   \Phi_-(t) & \text{ if } t<0 \\
   \Phi_+(t) & \text{ if } t\geq 0
  \end{cases}
   .
 \end{equation}
 We observe that the derivative of $\Phi$ is $\phi$ plus a factor times the delta function at zero. This observation is the reason why we split the integral from the following calculation at $0$. 
 
 First we consider the case $m=1$.
 \begin{align}\nonumber
  [f-\phi_R*f](t) &= \int_{-\infty}^{\infty} (f(t)-f(t-\tau))\phi_R(\tau) d\tau \\ \nonumber
  &= \left[\int_{-\infty}^{0} + \int_{0}^{\infty}\right] (f(t)-f(t-\frac{\tau}{R}))\phi(\tau) d\tau \\ \nonumber
  &= -\frac{1}{R} \int_{-\infty}^{\infty} f'(t-\frac{\tau}{R})\Phi(\tau) d\tau \\ \label{eq: induction beginning}
  &= -\frac{1}{R} \Phi_R*f'(t) .
 \end{align}
 We need to explain why the partial integration executed from line two to three produces no boundary terms at $-\infty,0$ and $\infty$. At zero there are no boundary terms since $(f(t)-f(t-\frac{\tau}{R}))$ vanishes at $\tau=0$ and the two limits $\lim_{t\rightarrow0\pm}\Phi(t)$ exist. Recall that $f$ is polynomially bounded. Moreover the function $\Phi$ decays rapidly at infinity. Thus there are no boundary terms at plus or minus infinity.
 Finally the last equality together with the fact that $\Phi$ decays rapidly implies
 \begin{align*}
  \norm{[f-\phi_R*f](t)} &\leq \frac{C}{R} \int_{-\infty}^{\infty} \norm{f'(t-\frac{\tau}{R})}\frac{1}{\tau^2+1} d\tau \\
  &\leq \frac{C}{\pi R} \int_{-\infty}^{\infty} \norm{f'(t-\tau)}\frac{R^{-1}}{\tau^2+R^{-2}} d\tau \\
  &= \frac{C}{R} P_{\frac{1}{R}}*\norm{f'}(t) .
 \end{align*}
 
 Now we consider the case $m\in\N_2$. Let us define recursively $f_{j+1}=f_j-\phi_R*f_j, f_0=f$ for $j\in\{0,1,\ldots,m-1\}$. Clearly $f_m=(\delta-\phi_R)^{*m}*f$. We prove now $f_j=(-1/R)^j\Phi_R^{*j}*f^{(j)}$ via induction on $j$. Observe that for any $j\in\N_1$ the function $\Phi^{*j}$ decays rapidly. For $j=1$ the inductive hypothesis is precisely (\ref{eq: induction beginning}). Assume that the hypothesis is valid for some $j<m$. Then by (\ref{eq: induction beginning}) for $f$ replaced by $f_j$
 \begin{align}\nonumber
  f_{j+1} = f_j - \phi_R*f_j = -\frac{1}{R}\Phi_R*f_j' = \left( -\frac{1}{R} \right)^{j+1}\Phi_R^{*(j+1)}*f^{(j+1)} .
 \end{align}
 From here we can finish the proof as in the case $m=1$.
\end{proof}

Since the $L^1$-norm of the Poisson kernel is $1$ (for any $y>0$) we see from Young's inequality that for any $g\in L^p(\R)$ and $y>0$ it holds that $\norm{P_y*g}_{L^p}\leq \norm{g}_{L^p}$. If $p=\infty$ and if we set $R=R(t)=w_{M_K}(c_1 t)$ we deduce from Lemma \ref{lem: J1 estimate} that 
\begin{equation}\label{eq: Carleson estimate for f' p=infty}
 R(t)^m \norm{(\delta-\phi_{R(t)})^{*m}*f(t)} \leq C_{c_1} < \infty 
\end{equation}
holds for all $t\geq 0$. If we compare this with (\ref{eq: rate - Fourier variant}) we see that this already yields the desired estimate on $J_1$ in the case $p=\infty$. If $p<\infty$ we need a slightly more involved argument based on a property of Carleson-measures.
 
Therefore let $P*g(t,y):=P_y*g(t)$ and let $\mu$ be a Borel measure on the upper half-plane $H=\{(t,y)\in\R^2; y>0\}$. Now we ask for which measures $\mu$ an inequality 
\begin{equation}\label{eq: Carleson estimate}
 \norm{P*g}_{L^p(H,d\mu)} \leq C_p \norm{g}_{L^p(\R)}
\end{equation}
holds for all $g\in L^p(\R)$ with a constant $C_p$ not depending on $g$? Note that the inequality $\norm{P_y*g}_{L^p}\leq \norm{g}_{L^p}$ is a special case of (\ref{eq: Carleson estimate}) for $C_p=1$ with $\mu$ being the one-dimensional Hausdorff measure of the line $\{(t,y)\in H; t\in\R\}\subset H$. Actually for $1<p<\infty$ one can characterize the class of all measures $\mu$ for which (\ref{eq: Carleson estimate}) holds for all $g$. These measures are called \emph{Carleson measures} (see \cite[Theorem I.5.6.]{Garnett1981}). Let $\gamma:\R\rightarrow (0,\infty)$ be a bounded continuous function with bounded variation.  Then the one-dimensional Hausdorff measure of
\begin{equation}\nonumber
 \Gamma = \{t+\gamma(t); t\in\R\} \subset H
\end{equation}
is a Carleson measure. Now let $\gamma(t)=1/R(t)=1/w_{M_K}(c_1 t)$ for $t>0$ and $\gamma(t)=w_{M_K}(0)=1$ for $t<0$. If we set $\mu_{M_K}$ to be the Carleson measure corresponding to this particular choice of $\gamma$ then we deduce that for $1<p<\infty$
\begin{equation}\label{eq: Carleson estimate for f'}
 \norm{P*\norm{f^{(m)}}}_{L^p(H,d\mu_{M_K})} \leq C_p \norm{f^{(m)}}_{L^p(\R_+; X)} < \infty.
\end{equation}
From this together with Lemma \ref{lem: J1 estimate} we deduce
\begin{lemma}\label{lem: final J1 estimate} Let $c_1>$ and define $R(t)=w_{M_K}(c_1t)$.
 (i) Then for $p=\infty$ we have
 \begin{equation}\nonumber
  \sup_{0<t<\infty} R(t)^m \norm{ (\delta-\phi_{R(t)})^{*m}*f(t) } \leq C \norm{f^{(m)}}_{L^{\infty}(\R_+;X)} .
 \end{equation}
 (ii) For $1<p<\infty$ we have
 \begin{equation}\nonumber
  \int_0^{\infty} \norm{ R(t)^m (\delta-\phi_{R(t)})^{*m}*f(t) }^p dt \leq C \norm{f^{(m)}}_{L^p(\R_+;X)}^p .
 \end{equation}
 In both cases $C$ does not depend on $f$.
\end{lemma}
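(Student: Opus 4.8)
The plan is to obtain Lemma~\ref{lem: final J1 estimate} essentially by repackaging Lemma~\ref{lem: J1 estimate} together with the two estimates (\ref{eq: Carleson estimate for f' p=infty}) and (\ref{eq: Carleson estimate for f'}) assembled just above it. Concretely, in the pointwise bound of Lemma~\ref{lem: J1 estimate} I would substitute $R=R(t)=w_{M_K}(c_1t)$ and multiply both sides by $R(t)^m$, obtaining for every $t\geq0$
\begin{equation}\nonumber
 R(t)^m\norm{(\delta-\phi_{R(t)})^{*m}*f(t)}\ \leq\ C\,P_{1/R(t)}*\norm{f^{(m)}}(t)\ =\ C\,(P*\norm{f^{(m)}})\bigl(t,1/R(t)\bigr),
\end{equation}
with $C$ depending only on $\phi$ and $m$, and where $f^{(m)}$ is understood to be extended by $0$ to $\R$ so that $\norm{\,\norm{f^{(m)}}\,}_{L^p(\R)}=\norm{f^{(m)}}_{L^p(\R_+;X)}$. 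This reduces both parts of the lemma to controlling the Poisson integral of $\norm{f^{(m)}}$ sampled along the graph $y=1/R(t)$.

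Part~(i) is then immediate: since $\norm{P_y}_{L^1(\R)}=1$ for every $y>0$, Young's inequality gives $(P*\norm{f^{(m)}})(t,1/R(t))\leq\norm{f^{(m)}}_{L^\infty(\R_+;X)}$ uniformly in $t>0$, and taking the supremum finishes the case $p=\infty$. For part~(ii), with $1<p<\infty$, I would note that all the sampling points $(t,1/R(t))$, $t>0$, lie on the curve $\Gamma=\{t+i\gamma(t):t\in\R\}\subset H$ with $\gamma(t)=1/w_{M_K}(c_1t)$ for $t>0$ and $\gamma(t)=1$ for $t\leq0$; since $M_K=M\log K$ is continuous and increasing, $\gamma$ is continuous, takes values in $(0,1]$ and is of bounded variation, so $\mathcal{H}^1|_\Gamma=\mu_{M_K}$ is a Carleson measure and the Carleson embedding theorem \cite[Theorem~I.5.6]{Garnett1981} supplies a constant $C_p$, independent of $f$, for which (\ref{eq: Carleson estimate}) holds; applied to $g=\norm{f^{(m)}}$ this is exactly (\ref{eq: Carleson estimate for f'}). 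Finally I would pass from integration along the graph in the variable $t$ to integration against arc length: parametrizing $\Gamma$ by $t$ one has $d\mathcal{H}^1=\sqrt{1+\gamma'(t)^2}\,dt\geq dt$ (more precisely, the pushforward of $\mathcal{H}^1|_\Gamma$ under $(t,y)\mapsto t$ dominates Lebesgue measure on $\R$), whence $\int_0^\infty|(P*\norm{f^{(m)}})(t,\gamma(t))|^p\,dt\leq\norm{P*\norm{f^{(m)}}}_{L^p(H,d\mu_{M_K})}^p$; chaining this with the displayed pointwise bound and with (\ref{eq: Carleson estimate for f'}) yields (ii), the constant being independent of $f$ since the constant of Lemma~\ref{lem: J1 estimate} depends only on $\phi,m$ and $C_p$ only on $p$ and $\mu_{M_K}$.

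The substitution and the application of Young's inequality are routine; the only points that genuinely need care are (a) confirming that $\gamma=1/w_{M_K}(c_1\cdot)$ really satisfies the hypotheses under which $\mathcal{H}^1|_\Gamma$ is a Carleson measure — that is, that $w_{M_K}$ inherits continuity, monotonicity and the lower bound $w_{M_K}\geq1$ from the definition of $w$ and the stated properties of $M_K$ — and (b) the elementary comparison showing that integrating along $\Gamma$ with respect to $dt$ is dominated by integrating with respect to $d\mathcal{H}^1$, which is what legitimizes the last step. I expect (a) to be the main obstacle, in the sense that it is the place where one must be precise about what "continuous and increasing'' is taken to mean for $M$ (and hence for $M_K$ and $w_{M_K}$); once the Carleson property of $\mu_{M_K}$ is granted, the rest of the argument is bookkeeping.
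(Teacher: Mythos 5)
Your proposal is correct and follows essentially the same route as the paper: plug $R=R(t)$ into Lemma \ref{lem: J1 estimate}, handle $p=\infty$ by $\|P_y\|_{L^1}=1$ and Young, and handle $1<p<\infty$ via the Carleson embedding applied to the arc-length measure on the graph of $\gamma=1/w_{M_K}(c_1\cdot)$, as in (\ref{eq: Carleson estimate for f' p=infty})–(\ref{eq: Carleson estimate for f'}). If anything you are slightly more explicit than the paper about the last bookkeeping step, namely that the pushforward of $\mathcal{H}^1|_\Gamma$ under $(t,y)\mapsto t$ dominates Lebesgue measure so that integration in $dt$ over $(0,\infty)$ is controlled by the $L^p(H,d\mu_{M_K})$ norm; the paper leaves this implicit, and your point (a) about verifying continuity and bounded variation of $\gamma$ (hence the Carleson property of $\mu_{M_K}$) is a reasonable thing to flag, though the paper treats it as clear from $M_K$ being continuous and increasing.
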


\subsection{Estimation of $J_2$}
The following Lemma is only necessary if $p\neq\infty$.
\begin{lemma}\label{lem: K auxiliary estimate}
 There exists a $\delta>0$ such that $K(w_{M_K}(t))\geq t^{\delta}$ for all $t\geq M_K(1)$.
\end{lemma}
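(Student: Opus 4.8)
The plan is to reduce the statement to an elementary pointwise inequality relating $K$, $M$ and $M_K$. Writing $s:=w_{M_K}(t)$, I would first record the basic structure of $M_K(s)=M(s)\log K(s)$: it is continuous (a product of continuous functions, with $\log K(s)\geq\log 2>0$), nondecreasing, and tends to $\infty$, since hypothesis (i) forces $K(s)\geq s$ for $s>1$, hence $\log K(s)\to\infty$ and $M_K(s)\geq 2\log K(s)\to\infty$. Consequently, for every $t\geq M_K(1)$ the set $\{u\geq 0: M_K(u)=t\}$ is nonempty (by the intermediate value theorem) and bounded above, so the supremum $s=M_K^{-1}(t)$ is attained and $M_K(s)=t$; moreover $s\geq 1$ because $w_{M_K}$ is nondecreasing and $w_{M_K}(M_K(1))=\sup\{u: M_K(u)=M_K(1)\}\geq 1$.

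The decisive point is that, for every $s>1$,
\[
 M_K(s)=M(s)\log K(s) < K(s)\cdot K(s) = K(s)^2 ,
\]
using $M(s)\leq K(s)$ from hypothesis (i) together with the elementary bound $\log x<x$, valid for all $x>0$ and in particular for $x=K(s)\geq 2$. Hence $K(s)>M_K(s)^{1/2}$ for all $s>1$, and substituting $s=w_{M_K}(t)$ and using $M_K(w_{M_K}(t))=t$ yields
\[
 K\bigl(w_{M_K}(t)\bigr) > t^{1/2} \qquad\text{for every } t\geq M_K(1) \text{ with } w_{M_K}(t)>1 .
\]
This already gives the claim with $\delta=1/2$ at all such $t$.

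It remains to treat those $t\geq M_K(1)$ for which $w_{M_K}(t)=1$. Since $w_{M_K}(t)=1$ together with $M_K(w_{M_K}(t))=t$ forces $t=M_K(1)$, this can occur only at the single point $t=M_K(1)$; there $K(w_{M_K}(t))=K(1)\geq 2$, while $t=M_K(1)$ is a fixed finite number, so $K(1)\geq M_K(1)^{\delta}$ holds as soon as $\delta$ is small enough, e.g.\ $\delta\leq\log 2/(1+\log M_K(1))$ (note $M_K(1)=M(1)\log K(1)\geq 2\log 2>1$, so this bound is positive). Taking $\delta:=\min\{1/2,\ \log 2/(1+\log M_K(1))\}>0$ then works for all $t\geq M_K(1)$. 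I do not anticipate a genuine obstacle here: the only points needing a little care are the identity $M_K(w_{M_K}(t))=t$ (which uses continuity of $M_K$ and $M_K(s)\to\infty$) and the single boundary value $t=M_K(1)$. Note that hypothesis (ii) plays no role in this lemma — only (i) and the normalizations $M,K\geq 2$ are used.
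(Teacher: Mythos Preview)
Your argument is correct and is in fact cleaner than the paper's. The paper proceeds indirectly: from $K(R)\geq M(R)$ it writes $t=M(R)\log K(R)\geq M(R)\log M(R)$, then invokes the asymptotic inverse of $x\mapsto x\log x$ to obtain $M(R)\leq \delta^{-1}t/\log t$, and finally computes $K(R)=\exp(t/M(R))\geq t^{\delta}$. This yields only the existence of some small $\delta>0$ coming from an asymptotic estimate. You instead observe the one-line inequality $M_K(s)=M(s)\log K(s)<K(s)\cdot K(s)=K(s)^2$ for $s>1$, which immediately gives the explicit value $\delta=1/2$ at all interior points, with a trivial adjustment at the single boundary value $t=M_K(1)$. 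Both arguments use only hypothesis (i); your route avoids the detour through the inverse of $x\log x$ and produces a concrete constant.
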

\begin{proof}
 Let $R=w_{M_K}(t)$. Since $w_{M_K}$ is essentially the right-inverse of $M_K$ we have
 \begin{align*}
  t = M_K(R) = M(R)\log(K(R))\geq M(R)\log(M(R)).
 \end{align*}
 The inverse of the function $x\mapsto x\log(x)$ is asymptotically equal to $y\mapsto y/\log(y)$ for large $y>0$. Hence there exists a $\delta>0$ such that $M(R) \leq \delta^{-1} t/\log(t)$. Thus
 \begin{align*}
  K(R) = \exp(\log(K(R))) = \exp\left(\frac{t}{M(R)}\right) \geq \exp(\delta\log(t)) = t^{\delta}.
 \end{align*}
\end{proof}
At this point in the proof we fix a $\psi$ having one additional property. We assume that the derivatives of $\psi$ satisfy for some $C_1>0$
\begin{equation}\label{eq: psi Denjoy-Carleman condition}
 \forall j\in\N_0: \sup_{s\in[-1,1]}\abs{\psi^{(j)}(s)} \leq C_1^{j+1}A_j \text{ with } A_j = (j\log(2+j)^{1+\varepsilon})^j .
\end{equation}
Note that (\ref{eq: psi Denjoy-Carleman condition}) can not be satisfied by any $\psi$ if we would replace $A_j$ by $j!$ since then $\psi$ would be analytic and hence can not have compact support and $\psi(0)=1$ at the same time. The Denjoy-Carleman\footnote{A special version of the Denjoy-Carleman theorem (sufficient for our considerations) reads as follows: Let $S$ be the set of $C^{\infty}$-functions on $\R$ supported on $[-1,1]$ such that (\ref{eq: psi Denjoy-Carleman condition}) holds for a sequence $(A_j)$ such that $(\sqrt[j]{A_j})$ is increasing. Then $S$ contains a non-zero function if and only if $\sum_{j} 1/\sqrt[j]{A_j}<\infty$.} theorem (see e.g. \cite[Theorem 1.3.8]{HoermanderI} or \cite{Cohen68}) gives a description of those sequences $(A_j)$ which allow for compactly supported non-zero functions $\psi$ satisfying the inequality in (\ref{eq: psi Denjoy-Carleman condition}). In particular, the Denjoy-Carleman theorem implies that our choice of $A_j$ is admissible for the existence of such a $\psi$. Conversely it implies that there is no $\psi\in C_c^{\infty}(\R)\backslash\{0\}$ which satisfies (\ref{eq: psi Denjoy-Carleman condition}) with $\varepsilon=0$.

Now we proceed with the estimation of $J_2(t,R)$. Therefore we have to estimate $J_{2,j}(t,R)=\phi_R^{*j}*f(t)$ for $j\in\{1,\ldots,m\}$. First let us consider $J_{2,1}$. Let $N\in\N_0$. Integration by parts $N$-times yields
\begin{align}\label{eq: J_2,1 on the Fourier side}
 J_{2,1}(t,R) &= \frac{1}{2\pi} \int_{-\infty}^{\infty} e^{ist}F(s)\psi_R(s) ds \\ \nonumber
 &= \frac{1}{2\pi} \left(\frac{i}{t}\right)^N \int_{-R}^{R} e^{ist}\left[\sum_{j=0}^N \binom{N}{j} F^{(N-j)} R^{-j}(\psi^{(j)})_R\right](s) ds .
\end{align}
To verify the following calculations recall (\ref{eq: F condition}) and (\ref{eq: psi Denjoy-Carleman condition}). We estimate the integral very roughly from above: length of interval of integration times supremum of the integrand within this interval. We also use Stirling's formula implying for example that $(cj)^j\leq j!\leq (Cj)^j$ for appropriate constants $c,C>0$.
\begin{align}\nonumber
 \norm{R^mJ_{2,1}(t,R)} &\leq Ct^{-N} R^{m+1} \sum_{j=0}^N \binom{N}{j} (N-j)! K(R) M(R)^{N-j} \left(\frac{\norm{\psi^{(j)}}_{\infty}^{\frac{1}{j}}}{R}\right)^j \\ \label{eq: no observation}
 &\leq C \cdot R^{m+1} K(R)\left(\frac{C_2M(R)N}{e t}\right)^N \cdot \sum_{j=0}^N\left(\frac{C_3\log(2+N)^{1+\varepsilon}}{RM(R)}\right)^j \\ \nonumber
 &=: C \cdot A \cdot B.
\end{align}
The second inequality is valid for sufficiently large $C_2, C_3>0$. Now let us set $N=\lfloor t/(C_2M(R)) \rfloor$ and $R=w_{M_K}(c_1 t)$. The constant $c_1>0$ will be chosen later. Then the condition (ii) on $K$ implies
\begin{equation}\nonumber
 B \leq \sum_{j=0}^N\left(\frac{C_3\log((c_1C_2)^{-1}\log(K(R)))^{1+\varepsilon}}{RM(R)}\right)^j 
 \leq \sum_{j=0}^N\left(\frac{C_4(RM(R))^{1-\varepsilon^2}}{RM(R)}\right)^j
 \leq C .
\end{equation}
The constant in the last inequality does not depend on $t$. Moreover
\begin{align*}
 A \leq C R^{m+1} K(R) e^{-N} \leq C R^{m+1} K(R) e^{-\frac{\log(K(R))}{c_1C_2}} = C R^{m+1} K(R)^{1-\frac{1}{c_1C_2}} .
\end{align*}
If we choose $c_1$ sufficiently small Lemma \ref{lem: K auxiliary estimate} implies that
\begin{align}\label{eq: J2 final estimate}
 \norm{w_{M_K}(c_1 t)^mJ_{2,1}(t,w_{M_K}(c_1 t))} \leq
 \begin{cases}
  C & \text{ if } p=\infty, \\
  \frac{C}{(1+t)^{2/p}} & \text{ if } 1\leq p < \infty .
 \end{cases}
\end{align}
Clearly (\ref{eq: J_2,1 on the Fourier side}) remains valid if one replaces $J_{2,1}$ by $J_{2,k}$ and $\psi$ by its $k$-th power $\psi^k$. It is not difficult to check that $\psi^k$ also satisfies (\ref{eq: psi Denjoy-Carleman condition}) if one replaces $C_1^{j+1}$ by $C_1^k(kC_1)^{j}$. Therefore (\ref{eq: J2 final estimate}) remains true after replacing $J_{2,1}$ by $J_2$. This together with Lemma \ref{lem: final J1 estimate} proves Theorem \ref{thm: M_K theorem - Fourier variant}.

\subsection{A remark on condition (ii) for $K$}\label{sec: remark on condition ii}
Our proof breaks down if we allow $\varepsilon$ to be zero in condition (ii) in Theorem \ref{thm: M_K theorem - Laplace variant} (and \ref{thm: M_K theorem - Fourier variant}). This is essentially due to the fact that by the Denjoy-Carleman theorem a function $\psi$ satisfying (\ref{eq: psi Denjoy-Carleman condition}) for $\varepsilon=0$ is necessarily \emph{quasi-analytic}. This means that $\psi^{(j)}(s_0)=0$ for a single $s_0\in\R$ but all $j\in\N$ automatically implies $\psi=0$. However, one can
weaken (ii) slightly by choosing for some given $\varepsilon\in(0,1)$ and $n\in\N_1$
\begin{align*}
 A_j &= j\cdot L_1(j)\cdot L_2(j)\cdot \ldots \cdot L_n(j)\cdot L_{n+1}(j)^{1+\varepsilon} \text{ with } \\
 L_k(j) &= \underbrace{[\log \circ \ldots \circ \log]}_{k \text{ times}} (1+k+j) .
\end{align*}
This allows to replace (ii) by the condition
\begin{equation}\nonumber
 K(s) = O\left(\exp\left(\exp\left( \frac{sM(s)}{L_1(sM(s))\cdot\ldots\cdot L_{n-1}(sM(s))\cdot L_n(sM(s))^{1+\varepsilon}} \right)\right)\right) .
\end{equation}
Again choosing $\varepsilon=0$ is forbidden for any $n$.

\section{Proof of Theorem \ref{thm: M_K theorem - Laplace variant}}\label{sec: proof of Theorem Laplace}
Lemma \ref{lem: Fourier vs Laplace} below implies that Theorem \ref{thm: M_K theorem - Laplace variant} and Theorem \ref{thm: M_K theorem - Fourier variant} are equivalent. To prepare the formulation of this lemma we introduce some notation. Let $M_1$, $M_2$, $K_1$, $K_2:\R_+\rightarrow [2,\infty)$ be continuous and increasing functions. For $f:\R_+\rightarrow X$ measurable and polynomially bounded and extended by zero on the negative real numbers we consider two distinct conditions. The first one is
\begin{align}\label{eq: Laplace condition}
 \forall z\in\Omega_{M_1}: \norm{\fhat(z)} \leq K_1(\abs{\Im z}).
\end{align}
This condition implicitly states that the Laplace transform of $f$ can be extended to $\Omega_{M_1}$. Let $F$ be the Fourier-transform of $f$. The second condition is
\begin{align}\label{eq: Fourier condition}
 \forall j\in\N_0, s\in\R: \norm{F^{(j)}(s)} \leq j! K_2(\abs{s})M_2(\abs{s})^j.
\end{align}
This condition implicitly states that the Fourier transform is a $C^{\infty}$-function. 

The following lemma relates these conditions to each other under a mild condition on $f$.

\begin{lemma}\label{lem: Fourier vs Laplace}
 Let $f:\R_+\rightarrow X$ be a measurable and polynomially bounded function with $f^{(m)}\in L^p(\R_+;X)$ for some $1\leq p\leq\infty$ and $m\in\N_1$. We extend $f$ by zero on the negative real numbers and denote by $F$ its Fourier transform. (a) If $F$ satisfies (\ref{eq: Fourier condition}) then $f$ satisfies (\ref{eq: Laplace condition}) with
 \begin{align*}
  M_1(s) = (1-\varepsilon)^{-1}M_2(s) \text{ and }
  K_1(s) = \varepsilon^{-1} K_2(s)
 \end{align*}
 for any $\varepsilon\in(0,1)$.
 (b) If $f$ satisfies (\ref{eq: Laplace condition}) then $F$ satisfies (\ref{eq: Fourier condition}) with
 \begin{align*}
  M_2(s) &= M_1\left(s+\frac{1}{M_1(s)}\right) \text{ and }  \\
  K_2(s) &= K_1\left(s+\frac{1}{M_1(s)}\right) + C_f\frac{M_1\left(s+\frac{1}{M_1(s)}\right)^{2-\frac{1}{p}}}{(1+s)^m} + C'_f .
 \end{align*}
 The constant $C_f$ depends only on $\norm{f^{(m)}}_{L^p}$, the constant $C'_f$ depends only on $\norm{f(0)}, \ldots,\norm{f^{(m-1)}(0)}$.
\end{lemma}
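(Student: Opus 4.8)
The plan is to relate the two conditions by moving the spectral parameter from the imaginary axis to the contour $\Re z = -1/M_1(\Im z)$ (for (a)) or by averaging over a small contour inside $\Omega_{M_1}$ (for (b)), using Cauchy's integral formula for the derivatives of an analytic function. For part (b), recall that $\fhat$ agrees with $F$ on the imaginary axis in the appropriate sense; however $F$ need not be analytic there, so we cannot apply Cauchy's formula with the imaginary axis as the center of a disc. Instead, I would fix $s\in\R$, set $r = 1/M_1(\abs{s})$, and note that the point $z_0 = -r/2 + i s$ lies in $\Omega_{M_1}$, and a disc of radius $\rho := r/2$ around the point $is$... — more carefully, since $\fhat$ is analytic on $\Omega_{M_1}\cup\C_+$, it is analytic on a neighbourhood of the segment joining $is$ to $-r + is$, and Cauchy's formula on a suitable small circle contained in $\Omega_{M_1}\cup\C_+$ gives
\begin{equation}\nonumber
 F^{(j)}(s) = (-i)^j \fhat^{(j)}(is) = (-i)^j \frac{j!}{2\pi i}\int_{\abs{w - is} = \rho} \frac{\fhat(w)}{(w-is)^{j+1}}\,dw,
\end{equation}
where $\rho$ is chosen so that the circle stays in $\Omega_{M_1}\cup\C_+$; then $\abs{\fhat}$ on the circle is bounded by $K_1$ evaluated at $\abs{s}+\rho \leq \abs{s} + 1/M_1(\abs{s})$ (using that $M_1$ is increasing, the worst case is the rightmost point of the circle), and dividing by $\rho^{j}$ produces the factor $M_2(\abs{s})^j$ with $M_2(s) = M_1(s + 1/M_1(s))$ up to harmless constants absorbed into the choice of $\rho$. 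The subtlety is that $\fhat$ is only controlled by $K_1$ on $\Omega_{M_1}$, not on all of $\C_+$; on the part of the circle lying in $\C_+$ we must instead use Remark \ref{rem: fhat well-defined}, i.e. the polynomial bound $\norm{f(t)}\leq C(1+t)^{m-1/p}$, which after integrating the defining Laplace integral yields a bound of order $M_1(s+1/M_1(s))^{2-1/p}/(1+\abs{s})^m$ — exactly the middle term in the stated formula for $K_2$ — plus a constant $C'_f$ coming from the (non-zero) initial values $f(0),\dots,f^{(m-1)}(0)$ that appear when one integrates by parts $m$ times to exploit $f^{(m)}\in L^p$.

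For part (a), I would go the other direction: given the bounds \eqref{eq: Fourier condition} on all derivatives of $F$ at a fixed real point $s$, the Taylor series of $F$ around $s$ has radius of convergence at least $1/M_2(\abs{s})$, since $\sum_j \norm{F^{(j)}(s)}/j! \cdot \rho^j \leq K_2(\abs{s})\sum_j (\rho M_2(\abs{s}))^j$ converges for $\rho < 1/M_2(\abs{s})$. Hence $F$ extends analytically to the disc $\abs{z - is} < 1/M_2(\abs{s})$ — note the argument is in $\C$ now, identifying $F(s)$ with $\fhat(is)$ — and on the smaller disc $\abs{z - is} \leq (1-\varepsilon)/M_2(\abs{s})$ the extension is bounded by $K_2(\abs{s})\sum_j (1-\varepsilon)^j = \varepsilon^{-1}K_2(\abs{s})$. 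Covering a left half-strip by these discs as $s$ ranges over $\R$, one checks that the union of the discs of radius $(1-\varepsilon)/M_2$ contains $\Omega_{M_1}$ with $M_1(s) = (1-\varepsilon)^{-1} M_2(s)$: indeed a point $z = x + iy \in \Omega_{M_1}$ has $\abs{x} < 1/M_1(\abs{y}) = (1-\varepsilon)/M_2(\abs{y})$, so it lies in the disc centered at $iy$. A minor point is the overlap consistency of the analytic continuations, which follows because two Taylor expansions centered at nearby real points agree on the overlap of their discs of convergence by the identity theorem (they agree on a real interval). The bound on the union then transfers to the claimed bound \eqref{eq: Laplace condition}.

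The main obstacle, and where the bookkeeping gets delicate, is part (b): keeping the radius $\rho$ of the Cauchy circle large enough that $M_2 = M_1(\cdot + 1/M_1(\cdot))$ comes out with the right argument while small enough that the circle genuinely stays inside $\Omega_{M_1}\cup\C_+$ (one uses the monotonicity of $M_1$ here, and possibly shrinks $\rho$ by a universal constant, which is permissible since constants may be absorbed), and then carefully tracking the contribution of the $\C_+$-arc of the circle through the integration-by-parts estimate, so that the power of $M_1$ is exactly $2 - 1/p$ and the initial-data contribution is isolated into the additive constant $C'_f$. Everything else is a routine application of Cauchy estimates and Young's inequality, and the "equivalence" of the two theorems claimed at the start of Section \ref{sec: proof of Theorem Laplace} then follows by checking that, under hypotheses (i)–(ii), the passage $M \mapsto M_2, K \mapsto K_2$ (and back) changes $M_K(s) = M(s)\log K(s)$ only by constants in the argument, which is absorbed into $c_1$.
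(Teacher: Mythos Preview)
Your treatment of part (a) is correct and coincides with the paper's: expand $\fhat$ in a Taylor series at $is$, bound term by term via \eqref{eq: Fourier condition}, and sum the geometric series.

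Part (b), however, has a genuine gap. The paper does \emph{not} use the plain Cauchy formula on the circle $\gamma=\{|z-is|=r\}$; it inserts the auxiliary factor
\[
 1+\frac{(z-is)^{2}}{r^{2}},
\]
which is analytic, equals $1$ at $z=is$ (so the residue is unchanged), and vanishes at the two points $is\pm ir$ where $\gamma$ meets the imaginary axis. Parametrising $\gamma_{+}$ by $z=is+re^{i\theta}$ with $\theta\in(-\tfrac{\pi}{2},\tfrac{\pi}{2})$, this factor contributes $|1+e^{2i\theta}|=2\cos\theta$. After the $m$-fold integration by parts you describe, the remaining Laplace integral on $\gamma_{+}$ obeys (by H\"older)
\[
 \Bigl\|\int_{0}^{\infty}e^{-zt}f^{(m)}(t)\,dt\Bigr\|\le \|f^{(m)}\|_{L^{p}}\,(q\,r\cos\theta)^{-1/q},
\]
so for $p=\infty$ (hence $q=1$) the bare Cauchy integral forces you to integrate $(\cos\theta)^{-1}$ over $(-\tfrac{\pi}{2},\tfrac{\pi}{2})$, which diverges. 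The extra $\cos\theta$ from the Newman-type factor is precisely what saves this endpoint case; the paper then uses the estimate $\int_{-\pi/2}^{\pi/2}e^{-rt\cos\theta}\cos\theta\,d\theta\le C((rt)^{2}+1)^{-1}$ followed by H\"older to produce the exponent $2-\tfrac{1}{p}$ on $M_{1}$ in the formula for $K_{2}$. Your outline treats the $\gamma_{+}$ contribution as routine bookkeeping, but without this device the argument does not close for $p=\infty$, and the lemma is stated (and applied in Section~\ref{sec: proof of Theorem Laplace}) for all $p\le\infty$.

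A secondary point: your tentative radius $\rho=\tfrac{1}{2}M_{1}(|s|)^{-1}$ does not in general keep the left half of the circle inside $\Omega_{M_{1}}$, since one needs $\rho\le 1/M_{1}(|s|+\rho)$ and halving alone need not achieve this. The paper takes $r=1/M_{1}\bigl(|s|+1/M_{1}(|s|)\bigr)$, which makes the containment exact and is the reason the argument $s+1/M_{1}(s)$ appears in $M_{2}$ and $K_{2}$.
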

Before proving this lemma we finish the proof of Theorem \ref{thm: M_K theorem - Laplace variant}. Since $f$ satisfies (\ref{eq: Laplace condition}) for $M_1=M$ and $K_1=K$, Lemma \ref{lem: Fourier vs Laplace} implies that (\ref{eq: Fourier condition}) is true for $M_2$ and $K_2$ given as in part (b) of the lemma. In the following we assume $s>0$ large enough to satisfy $1/M_1(s)\leq s$. Note that condition (i) in Theorem \ref{thm: M_K theorem - Laplace variant} implies the existence of a (small) constant $c>0$ such that (for large $s$)
 \begin{equation}\nonumber
  c M_2(s)\log(K_2(s)) \leq M(2s)\log(K(2s)) .
 \end{equation}
This immediately yields for large $t$
 \begin{equation}\nonumber
   w_{M_K}(ct) \leq 2 w_{(M_2)_{K_2}}(t).
 \end{equation}
Therefore $w_{(M_2)_{K_2}}(c_1\cdot)^m f \in L^p$ for some $c_1>0$ implies that $ w_{M_K}(cc_1 \cdot)^mf \in L^p$. The proof of Theorem \ref{thm: M_K theorem - Laplace variant} is complete.


\begin{proof}[Proof of Lemma \ref{lem: Fourier vs Laplace}]
 Let us begin with the easier part (a). Hadamard's formula shows that (\ref{eq: Fourier condition}) implies that $\fhat$ is analytic in $\Omega_{M_2}\supset \Omega_{M_1}$. Let $z\in\Omega_{M_1}$ and let $s=\Im z$. Then 
 \begin{align*}
  \norm{\fhat(z)} = \norm{\sum_{j=0}^{\infty} \frac{1}{j!}\fhat^{(j)}(is)(z-is)^j} 
  \leq \sum_{j=0}^{\infty} K_2(s) M_2(s)^j\left(\frac{1-\varepsilon}{M_2(s)}\right)^j 
  = \varepsilon^{-1} K_2(s) .
 \end{align*}

 Let us now prove part (b). Let us fix $s\in\R$, let $r=1/M_1(\abs{s}+1/M(\abs{s}))$ and let $\gamma$ be the positively oriented circle of radius $r$ around $is$ in the complex plane. Note that $\gamma$ is indeed included in the closure of the union of $\Omega_{M_1}$ and $\C_+$. Let $\gamma_+$ and $\gamma_-$ be the intersection of $\gamma$ with $\C_+$ and $\C_-$, respectively. By Cauchy's formula we have
 \begin{align*}
  \fhat^{(j)}(is) &= \frac{j!}{2\pi i} \left[\int_{\gamma_-} + \int_{\gamma_+}\right] \frac{\fhat(z)}{(z-is)^{k+1}}\left(1+\frac{(z-is)^2}{r^2}\right) dz \\
  &=: j!\left[I_- + I_+\right] .
 \end{align*}
 Let us first estimate $I_-$:
 \begin{align}\nonumber
  \norm{I_-} &\leq \frac{1}{2\pi} \cdot r^{-j-1} \sup_{z\in\gamma_-} \norm{\fhat(z)} \cdot \pi r\cdot 2 \\ \label{eq: F vs L minus}
  &\leq K_1\left(\abs{s}+\frac{1}{M_1(\abs{s})}\right) M_1\left(\abs{s}+\frac{1}{M_1(\abs{s})}\right)^j .
 \end{align}
 Let us now estimate $I_+$:
 \begin{align*}
  I_+ &= \frac{1}{2\pi i}\int_{\gamma_+} \frac{\left(1+\frac{(z-is)^2}{r^2}\right)}{(z-is)^{j+1}} 
  \left( \sum_{k=0}^{m-1}z^{-j-1}f^{(k)}(0) + z^{-m} \int_0^{\infty} e^{-zt} f^{(m)}(t) dt \right) dz \\
  &=: \sum_{k=0}^{m-1} I_{+,k} + I_{+,m}
 \end{align*}
 It is an easy exercise to show that the integral of $e^{-rt\cos(\theta)}\cos(\theta)$ over $\theta\in(\pi/2,\pi/2)$ can be estimated from above by a constant times $((rt)^2+1)^{-1}$. Therefore by H\"older's inequality we get for large $\abs{s}$
 \begin{align*}
  \norm{I_{+,m}} &\leq \frac{C}{\abs{s}^m r^{j+1}} \int_0^{\infty} \int_{-\frac{\pi}{2}}^{\frac{\pi}{2}} e^{-rt\cos(\theta)}\cos(\theta) d\theta \norm{f^{(m)}(t)} dt \\
  &\leq \frac{C}{\abs{s}^m r^{j+2-1/p}} \norm{f^{(m)}}_{L^p} \\
  &\leq \frac{C\norm{f^{(m)}}_{L^p}}{\abs{s}^m} M_1(\abs{s}+1/M_1(\abs{s}))^{j+2-1/p} .
 \end{align*}
 A similar (and easier) estimate is true for the other summands $I_{+,k}$. This together with (\ref{eq: F vs L minus}) yields the claim.
\end{proof}

 \section{Optimality of Theorem \ref{thm: M_K theorem - Laplace variant}}\label{sec: Optimality}
 In this section we show that under the assumptions of Theorem \ref{thm: M_K theorem - Laplace variant} and for $p=\infty, m=1$ one can - up to improvement of the constant $c_1$ - not get a faster decay rate than the one already given by the theorem. To show this we use almost the same method as in \cite{BorichevTomilov2010}. There the authors showed the optimality in the very particular case that $M(s)=C(1+s^{\alpha})$ and $K(s)=C(1+s^\beta)$ for $\beta>\alpha/2>0$.
 
 \begin{theorem}\label{thm: Optimality}
  Let $c_1>0$ and let $M,K:\R_+\rightarrow[2,\infty)$ be continuous and increasing functions satisfying for some increasing function $N:\R_+\rightarrow[1,\infty)$
  \begin{enumerate}
   \item[(i)] $\lim_{s\rightarrow\infty} \frac{M_K(s)}{\log(2+s)} = \infty $ and $\exists\varepsilon>0, s_0>0 \forall s\geq s_0: K(s)\geq s^{\varepsilon}$,
   \item[(ii)] $\exists s_0>0\forall s\geq s_0,s'\geq0: M(s+s')\leq N(s') M(s)$.
  \end{enumerate}
  Then there exists a real number $\gamma\geq0$, not depending on $c_1$ and a locally integrable function $f:\R_+\rightarrow\C$ with $f'\in L^{\infty}(\R_+)$ such that
  \begin{equation}\label{eq: optimality fhat estimate}
   \abs{\fhat(z)} \leq \frac{C}{R} M(\abs{\Im z})^{\frac{1}{2}} K(\abs{\Im z})^{\frac{\gamma}{c_1}} \text{ for all } z\in\Omega_M
  \end{equation}
  and 
  \begin{equation}\label{eq: optimality decay rate}
   \limsup_{t\rightarrow\infty} M_K^{-1}(c_1t)\abs{f(t)} \geq c > 0.
  \end{equation}
  If instead of (ii) we have the stronger assumption that there exists a $\gamma_0\geq1$ such that
    \begin{enumerate}
   \item[(ii')] $\forall s_1>0 \exists s_0>0 \forall s\geq s_0,s'\leq s_1: M(s+s')\leq \gamma_0 M(s)$
  \end{enumerate}
  and if $\gamma>\gamma_0$ then it is possible to choose $f$ in such a way that  (\ref{eq: optimality fhat estimate}) holds for this choice of $\gamma$. If in addition $M$ is unbounded then it is possible to choose $f$ in such a way that (\ref{eq: optimality fhat estimate}) holds for all $\gamma>\gamma_0$.
 \end{theorem}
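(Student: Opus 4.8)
The plan is to construct $f$ as a lacunary superposition of elementary "resolvent bumps", following the method of Borichev and Tomilov \cite{BorichevTomilov2010}. Fix a constant $\sigma>\gamma_0$ (to be taken large, and $\sigma=\sigma_n\to\infty$ when $M$ is unbounded), a very rapidly increasing frequency sequence $s_n\to\infty$, orders $k_n\to\infty$ and amplitudes $a_n>0$, all chosen below. With $\eta_n:=1/M(s_n)$ put
\begin{equation}\nonumber
 \fhat_n(z)=\frac{a_n\eta_n^{k_n}}{(z-is_n+\sigma\eta_n)^{k_n}},\qquad
 f_n(t)=a_n\,\frac{(\eta_n t)^{k_n-1}}{(k_n-1)!}\,e^{(is_n-\sigma\eta_n)t},
\end{equation}
and set $f=\sum_n f_n$, $\fhat=\sum_n\fhat_n$. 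Each pole $is_n-\sigma\eta_n$ has real part $-\sigma/M(s_n)<-1/M(s_n)$, so by the slow variation of $M$ on bounded intervals ((ii), resp.\ (ii')) it lies strictly to the left of $\Omega_M$ once $\sigma$ exceeds the relevant local oscillation constant of $M$; hence each $\fhat_n$ is analytic on $\Omega_M\cup\C_+$, and — taking the $s_n$ sparse enough for the tails to converge — so is $\fhat$, which on $\C_+$ coincides with the Laplace transform of the bounded function $f$.

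First I would record the single-bump estimates. By Stirling's formula $\abs{f_n}$ peaks at $t_n=(k_n-1)/(\sigma\eta_n)\asymp k_nM(s_n)/\sigma$ with $h_n:=\abs{f_n(t_n)}\asymp a_n\eta_n\sigma^{-k_n}/\sqrt{k_n}$, and $\abs{f_n(t)}=h_n\,(ue^{1-u})^{k_n-1}$ with $u=t/t_n$; thus $f_n$ decays super-exponentially in $k_n$ away from $t_n$, and differentiating gives $\norm{f_n'}_\infty\asymp s_nh_n$ (the modulation dominates the envelope). On $\Omega_M$ the pole is at distance $\gtrsim(\sigma-\gamma_0)\eta_n$ from heights near $s_n$ and at distance $\ge\abs{\Im z-s_n}$ otherwise, so $\abs{\fhat_n(z)}\le a_n(\sigma-\gamma_0)^{-k_n}$ throughout $\Omega_M$, together with the height decay $\abs{\fhat_n(z)}\le a_n(\eta_n/\abs{\Im z-s_n})^{k_n}$ used for $z$ whose imaginary part lies between consecutive $s_n$. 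Here $\gamma_0$ (under (ii): a fixed oscillation constant of $M$ near $\infty$) — and not $c_1$ — enters.

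The crux is the choice of parameters. Because the bumps sit at widely separated times, $\norm{f'}_\infty\lesssim\sup_n s_nh_n$, $\norm{f}_\infty\lesssim\sup_n h_n$ and $\abs{f(t_n)}\ge h_n-\sum_{m\ne n}\abs{f_m(t_n)}\ge h_n/2$ all hold once the $s_n$ (hence $t_n,k_n$) are lacunary enough, by the rapid decay of $(ue^{1-u})^k$ in $k$ for $u$ bounded away from $1$; these interference bounds are arranged a posteriori. It then suffices to guarantee, for large $n$: (a) $h_n\asymp 1/M_K^{-1}(c_1t_n)$, which forces $a_n\asymp\sigma^{k_n}M(s_n)\sqrt{k_n}/M_K^{-1}(c_1t_n)$ and yields $w_{M_K}(c_1t_n)\abs{f(t_n)}\gtrsim 1$; (b) $h_n\lesssim 1/s_n$, i.e.\ $M_K^{-1}(c_1t_n)\gtrsim s_n$, i.e.\ $k_n\gtrsim(\sigma/c_1)\log K(s_n)$, which gives $f'\in L^\infty(\R_+)$; (c) the Laplace bound, which by the single-bump estimates (and $M,K\ge 2$, plus slow variation of $M$ near $s_n$) reduces essentially to $(\sigma/(\sigma-\gamma_0))^{k_n}\sqrt{k_n}\lesssim K(s_n)^{\gamma/c_1}$, i.e.\ $k_n\lesssim(\gamma/c_1)\log K(s_n)/\log(\sigma/(\sigma-\gamma_0))$, and then gives (\ref{eq: optimality fhat estimate}). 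The upper bound in (c) and the lower bound in (b) are both of size $(\mathrm{const})\cdot c_1^{-1}\log K(s_n)$, so a valid $k_n\to\infty$ exists exactly when $\sigma\log(\sigma/(\sigma-\gamma_0))<\gamma$; since $\sigma\log(\sigma/(\sigma-\gamma_0))\downarrow\gamma_0$ as $\sigma\to\infty$ (staying $>\gamma_0$), this holds for suitable $\sigma$ whenever $\gamma>\gamma_0$ (giving (ii')), and under (ii) for some $\gamma$ independent of $c_1$; when $M$ is moreover unbounded one may let $\sigma=\sigma_n\to\infty$ with $\sigma_n\eta_n\to 0$ (poles still outside $\Omega_M$), pushing the threshold down to $\gamma_0$. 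Hypothesis (i) ensures $M_K\to\infty$, that $M_K^{-1}$ is well defined and $w_{M_K}$ sub-exponential — so that the maximum of $w_{M_K}(c_1t)\abs{f_n(t)}$ really sits near $t_n$ — and, via $K(s)\ge s^{\varepsilon}$, lets one absorb powers of $s_n$ into $K(s_n)^{o(1)}$ in (b), (c).

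Having fixed $\sigma$ and then chosen the $s_n$ growing fast enough that (a)–(c) hold with slack and all interference and tail series converge, one reads off that $\fhat$ is analytic on $\Omega_M\cup\C_+$ and satisfies (\ref{eq: optimality fhat estimate}), that $f'\in L^\infty(\R_+)$, and that $\limsup_{t\to\infty}M_K^{-1}(c_1t)\abs{f(t)}\ge\liminf_n w_{M_K}(c_1t_n)\abs{f(t_n)}\ge c>0$ (using $w_{M_K}(c_1t_n)=M_K^{-1}(c_1t_n)$ for large $n$), which is (\ref{eq: optimality decay rate}). \emph{The main obstacle} is the simultaneous fulfilment of (a), (b), (c): slow decay forces $a_n$ — hence $k_n$ — large, the Laplace bound forces $k_n$ small, and $L^{\infty}$-control of $f'$ caps $h_n$ at $\asymp 1/s_n$; the three can coexist only because of the gap $\gamma>\gamma_0$ and because the well-separated bumps make $\norm{f'}_\infty$ behave like a supremum rather than a sum. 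The most delicate bookkeeping is then verifying (\ref{eq: optimality fhat estimate}) uniformly over $z\in\Omega_M$ — in particular for $\Im z$ lying between the chosen frequencies, where the weight $\abs{\Re z}^{-1}M(\abs{\Im z})^{1/2}$ and the height decay of $\fhat_n$ must be played off each other — and this is where conditions (i), (ii)/(ii') are really consumed.
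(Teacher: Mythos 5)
Your construction is in the same Borichev--Tomilov spirit as the paper's, but with a different building block: a single pole of order $k_n$ at $is_n-\sigma/M(s_n)$, instead of the $k+1$ simple poles the paper places on a tiny circle a \emph{fixed} distance $\delta$ to the left of $i\R$. The simplification is appealing, but your choice of pole distance $\sigma\eta_n=\sigma/M(s_n)$ creates a genuine gap: it does not yield the exponent $\tfrac12$ on $M$ in (\ref{eq: optimality fhat estimate}). Tracing your own bookkeeping: from $t_n\approx k_nM(s_n)/\sigma$ and $c_1t_n\approx M_K(s_n)$ you have $k_n\asymp(\sigma/c_1)\log K(s_n)$, and from $h_n\asymp 1/s_n$ you have $a_n\asymp h_n\sqrt{k_n}\,\sigma^{k_n}/\eta_n$; the single-bump bound on $\Omega_M$ near $\Im z=s_n$ then reads
\begin{equation}\nonumber
\abs{\fhat_n(z)}\ \lesssim\ \frac{a_n\eta_n^{k_n}}{\bigl((\sigma-\gamma_0)\eta_n\bigr)^{k_n}}
\ \asymp\ \frac{\sqrt{k_n}\,M(s_n)}{s_n}\left(\frac{\sigma}{\sigma-\gamma_0}\right)^{k_n}
\ \asymp\ \frac{M(s_n)\sqrt{\log K(s_n)}}{s_n}\,K(s_n)^{\sigma\log(\sigma/(\sigma-\gamma_0))/c_1}.
\end{equation}
Comparing with the target $\tfrac{C}{s_n}M(s_n)^{1/2}K(s_n)^{\gamma/c_1}$ and absorbing the subpolynomial $\sqrt{\log K}$ into a small power of $K$, an \emph{unabsorbed} factor $M(s_n)^{1/2}$ remains. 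Your condition (c) silently drops it, but Theorem \ref{thm: Optimality} does not assume $M\lesssim K$: for $M(s)=\exp(\sqrt{s})$ and $K(s)=s^2$, which satisfy (i) and (ii') with every $\gamma_0>1$, the factor $M(s_n)^{1/2}$ beats every power of $K(s_n)$. Your scaling therefore only proves $\abs{\fhat}\lesssim\tfrac{C}{R}M\,K^{\gamma/c_1}$, not the claimed $\tfrac{C}{R}M^{1/2}K^{\gamma/c_1}$.

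The discrepancy has a clean source. Keeping the pole at distance $\asymp\eta_n$ from $i\R$ forces $k_n\asymp\log K(s_n)$, so the bump of peak time $t_n\approx M_K(s_n)/c_1$ has width $t_n/\sqrt{k_n}\asymp M(s_n)\sqrt{\log K(s_n)}$, which is $M(s_n)^{1/2}$ times the paper's; the Laplace transform near resonance is roughly (peak height)$\times$(width), so your bound is larger by exactly that factor. In the paper, the singularities sit at fixed distance $\delta$, so $k\asymp\delta M_K(R)/c_1$ and Stirling's $\sqrt{k}\asymp M^{1/2}\sqrt{\log K}$ supplies the $M^{1/2}$. The fix within your framework is to use the same scaling: put the single high-order pole at $is_n-\delta$ with $\delta>1/M(0)$ fixed, take $k_n\approx\delta M_K(s_n)/c_1$, and estimate $(1-1/(\delta M(\Im z)))^{-k_n}\le e^{\gamma_1 k_n/(\delta M(\Im z))}$ together with (ii)/(ii') to pass from $M(\Im z)$ to $M(s_n)$; this restores the theorem and is in substance the paper's Lemma \ref{lem: Optimality}. (The minor normalization slips --- $\fhat_n$ should carry $\eta_n^{k_n-1}$, and $h_n\asymp a_n\sigma^{1-k_n}/\sqrt{k_n}$ --- are mutually consistent and not the source of the problem.)
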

 
 \begin{remark}
  Note that condition (i) is only a very mild restriction. In fact, a typical situation where (i) is violated is that $M$ is a constant and $K$ grows at most polynomially. But then Theorem \ref{thm: M_K theorem - Laplace variant} implies exponential decay for $f$. This in turn implies, that the integral which defines $\fhat$ is absolutely convergent in a small strip to the left of the imaginary axis. In particular $\fhat$ extends analytically to this strip and is bounded there. So our results are trivially optimal in that case.
 \end{remark}

 Before we prove the Theorem we need a similar lemma as in \cite{BorichevTomilov2010}. Given a compactly supported measure $\mu$ on $\C\backslash\overline{\Omega_M\cup\C_+}$ we use the following notation for $z\in \Omega_M\cup\C_+$ and $t\geq0$
 \begin{align*}
  \CT\mu(z) = \int \frac{1}{z-\zeta} d\mu(\zeta),\, 
  \LT\mu(t) = \int e^{t\zeta} d\mu(\zeta),\,
  \LT'\mu(t) = \int \zeta e^{t\zeta} d\mu(\zeta).
 \end{align*}
 To simplify the notation we extend $M$ and $K$ symmetrically to the negative real axis.
 
 \begin{lemma}\label{lem: Optimality}
  Let $c_1,M$ and $K$ be as in Theorem \ref{thm: Optimality}. There exists a $\delta>0$ and $\gamma>0$, only depending on $M$ and $\delta$, such that for all $\varepsilon>0$ and $k_0\in\N_0$ there exists $k\in\N_{k_0}$ and a compactly supported Borel measure $\mu$ on $\C\backslash\overline{\Omega_M\cup\C_+}$ such that
  \begin{align}\label{eq: optimality 1}
   \abs{\CT\mu(z)}  \leq \frac{C}{R} M^{\frac{1}{2}}K^{\gamma} 1_{[R-2\delta,R+2\delta]}(\Im z) + \varepsilon, \\ \label{eq: optimality 2}
   \abs{\LT'\mu(t)} \leq C 1_{[\frac{k}{2\delta},\frac{2k}{\delta}]}(t) + \varepsilon, \\ \label{eq: optimality 3}
   \abs{\LT\mu(t)}  \leq \frac{C}{R} 1_{[\frac{k}{2\delta},\frac{2k}{\delta}]}(t) + \frac{\varepsilon}{\max\{R,M_K^{-1}(c_1 t)\}}, \\ \label{eq: optimality 4}
   \abs{\LT\mu(\frac{k}{\delta})} \geq \frac{c}{R}
  \end{align}
  holds for all $z\in\Omega_M$ and $t\geq0$. Here $R$ is the largest real number such that $c_1k=\delta M_K(R)$.   If instead of (ii) we have the stronger assumption that there exists a $\gamma_0\geq1$ such that
  \begin{enumerate}
   \item[(ii')] $\forall s_1>0 \exists s_0>0 \forall s\geq s_0,s'\leq s_1: M(s+s')\leq \gamma_0 M(s)$
  \end{enumerate}
  and if $\gamma>\gamma_0$ then it is possible to choose $f$ in such a way that (\ref{eq: optimality 1}) holds for this choice of $\gamma$. If in addition $M$ is unbounded then it is possible to choose $f$ in such a way that (\ref{eq: optimality 1}) holds for all $\gamma>\gamma_0$.
 \end{lemma}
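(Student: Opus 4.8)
The plan is to prove Lemma \ref{lem: Optimality} by an explicit construction, following the strategy of \cite{BorichevTomilov2010} but adapting it to the general functions $M$ and $K$. The measure $\mu$ will be a discretization of a suitable density supported on a short vertical segment lying just to the left of the forbidden region $\overline{\Omega_M\cup\C_+}$, at imaginary height roughly $R$, where $R$ is determined by $c_1 k = \delta M_K(R)$. The first step is to fix the geometry: choose $\delta>0$ small (its precise size dictated by condition (ii), i.e.\ by how fast $M$ can grow over an interval of length $\delta$), and place the support of $\mu$ on the vertical line $\Re\zeta = -\delta/M(R)$ (or a similar line slightly inside the complement of $\Omega_M$), in a window of imaginary parts of length $O(\delta)$ centered at $R$. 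One then writes $\mu$ as an atomic measure $\sum_n a_n \delta_{\zeta_n}$ with the $\zeta_n$ equally spaced by $\sim 1/R$ in this window, and chooses the weights $a_n$ so that $\LT\mu$ behaves like a modulated Gaussian bump concentrated near $t = k/\delta$ of width $\sim 1/\delta$ — this is exactly what is needed for the indicator $1_{[k/(2\delta),2k/\delta]}(t)$ in \eqref{eq: optimality 2}, \eqref{eq: optimality 3} and the lower bound \eqref{eq: optimality 4}.

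Concretely, I would take the weights to be samples of a bump function whose Fourier transform is a Gaussian-like function, so that $\LT\mu(t) = \int e^{t\zeta}\,d\mu(\zeta)$ evaluates (after summing the geometric-type series) to something like $e^{-(\text{something})R\cdot\Re\zeta}$ times a function of $t$ that decays rapidly away from $t\asymp k/\delta$. The factor $1/R$ appearing on the right-hand side of \eqref{eq: optimality 1}, \eqref{eq: optimality 3}, \eqref{eq: optimality 4} comes from the total mass or from the spacing $1/R$ of the atoms; the factor $M^{1/2}$ in \eqref{eq: optimality 1} is the standard gain one gets in estimating $\CT\mu(z) = \int (z-\zeta)^{-1}\,d\mu(\zeta)$ when $z$ ranges over $\Omega_M$ at height near $R$: the distance from such $z$ to the support of $\mu$ is $\gtrsim 1/M(R)$, but an $\ell^2$/Cauchy–Schwarz argument over the $\sim R$ atoms in a window of length $\delta$ produces the square-root improvement, exactly as in \cite{BorichevTomilov2010}. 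The factor $K^\gamma$ is the price for pushing the support of $\mu$ a \emph{fixed} multiple $\delta$ deeper than $1/M(R)$ into the left half-plane: evaluating $\CT\mu$ back at the line $\Re z = -1/M(|\Im z|)$ forces through a factor $e^{c\delta \cdot k/(\delta M(R))}\cdot(\ldots)$ which, since $k \asymp M_K(R)/c_1 = M(R)\log K(R)/c_1$, is a power $K(R)^{\gamma}$ with $\gamma$ proportional to $1/c_1$ — matching the exponent $\gamma/c_1$ in \eqref{eq: optimality fhat estimate} of Theorem \ref{thm: Optimality}. Finally \eqref{eq: optimality 2} for $\LT'\mu(t) = \int \zeta e^{t\zeta}\,d\mu(\zeta)$ follows from \eqref{eq: optimality 3}-type bounds since $|\zeta|\le C$ on the (bounded, since compactly supported) support of $\mu$; one keeps $\Im\zeta \asymp R$ but that is absorbed because $\LT'\mu$ is only required to be bounded (no $1/R$), whereas differentiating under the integral in $t$ brings down $\zeta$.

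For the part under the stronger hypothesis (ii$'$): here one wants to sharpen the exponent of $K$ from an unspecified $\gamma$ down to anything exceeding $\gamma_0$. The idea is that (ii$'$) says $M$ is, up to the fixed constant $\gamma_0$, insensitive to a bounded shift in its argument, so one can afford to keep the support of $\mu$ at imaginary heights in $[R-s_1, R+s_1]$ rather than a $\delta$-window, replacing $M(R)$ everywhere by a quantity comparable to it up to $\gamma_0$. Re-running the $\CT\mu$ estimate then yields the exponent $\gamma$ with $\gamma/\gamma_0$ arbitrarily close to $1$ — and if $M$ is unbounded one gets a sequence $R\to\infty$ along which this holds for \emph{every} $\gamma>\gamma_0$ simultaneously by letting the window parameter $s_1\to\infty$ slowly.

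The main obstacle I anticipate is getting the three estimates \eqref{eq: optimality 1}, \eqref{eq: optimality 2}–\eqref{eq: optimality 3}, and \eqref{eq: optimality 4} to hold \emph{simultaneously} with a single choice of the weights $a_n$ and the parameters $\delta, k$: tightening the time-localization of $\LT\mu$ (needed for the indicator supports) competes with keeping $\CT\mu$ small off the window $[R-2\delta, R+2\delta]$ and with the lower bound at $t=k/\delta$. This is exactly the uncertainty-principle tradeoff that makes the Borichev–Tomilov construction delicate, and handling it for general $M, K$ — rather than pure powers — is where condition (i) (to ensure $R\to\infty$ and $\log$-growth of $M_K$, so the windows are genuinely inside the complement) and condition (ii) (to control $M$ over the window) are used. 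The additive error $\varepsilon$ and the freedom to pass to large $k\in\N_{k_0}$ give exactly the slack needed: one proves the bumps with sharp main terms and absorbs the tails of the series and the approximation error from atomizing a continuous density into the $\varepsilon$ (resp.\ $\varepsilon/\max\{R, M_K^{-1}(c_1t)\}$) terms, which is harmless because in the deduction of Theorem \ref{thm: Optimality} from this lemma one sums over $k$ and lets $\varepsilon\to 0$.
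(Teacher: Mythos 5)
Your proposal is a plan rather than a proof, and the plan diverges from the paper's construction at every structural point; more importantly, you explicitly flag the decisive difficulty (making \eqref{eq: optimality 1}--\eqref{eq: optimality 4} hold \emph{simultaneously} for a single choice of weights and parameters) and leave it unresolved, which is exactly the part the explicit construction is designed to handle.

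Concretely, the paper does not place atoms on a vertical segment at $\Re\zeta = -\delta/M(R)$ with Gaussian-sampled weights. It takes
\[
 \mu = \frac{\tau}{R}\sum_{j=0}^{k} q^{j}\,\delta_{w+A^{-1}q^{j}},\qquad w = iR-\delta,\ q=e^{2\pi i/(k+1)},\ A^{-1}=\frac{\delta}{k\,l(k)},\ \tau=\frac{(\delta A)^{k}}{\sqrt{k}},
\]
i.e.\ $k+1$ point masses on a \emph{tiny circle} of radius $A^{-1}\to 0$ around $w$, at a \emph{fixed} depth $\delta$ (independent of $R$ and of $M(R)$) into the left half-plane, with weights proportional to the $(k+1)$-th roots of unity. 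The time-localization is not a Gaussian Fourier-transform effect but an algebraic collapse: expanding $\LT\mu(t)=\frac{\tau}{R}e^{tw}\sum_{m}\frac{1}{m!}(t/A)^{m}\sum_{j}q^{(m+1)j}$, the inner sum over $j$ vanishes unless $k+1\mid m+1$, so $\LT\mu(t)$ reduces (up to a bounded factor) to $\frac{\tau}{R}e^{tw}\frac{(k+1)t^{k}}{A^{k}k!}$, a single term which is explicitly maximized at $\delta t = k$; this is what delivers \eqref{eq: optimality 2}--\eqref{eq: optimality 4} at once. Likewise $\CT\mu$ has the exact closed form $\frac{\tau}{R}\cdot\frac{(k+1)A}{(A(z-w))^{k+1}-1}$, from which \eqref{eq: optimality 1} follows by estimating $\abs{z-w}\ge\delta-1/M(\Im z)$ and using $1-x\ge e^{-\gamma_1 x}$. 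Your explanation of the $M^{1/2}$ factor as an $\ell^{2}$/Cauchy--Schwarz gain over $\sim R$ atoms is not what happens: in the paper $M^{1/2}$ (really $\sqrt{M_K(R)}$) is a Stirling factor, coming from the $\sqrt{k}$ in $\tau=(\delta A)^k/\sqrt{k}$ together with $k\asymp\delta M_K(R)/c_1$, while the $K^{\gamma}$ factor comes from $e^{\gamma_1 k/(\delta M(\Im z))}\le K(R)^{\gamma_1 N(2\delta)/c_1}$ with condition (ii) of Theorem \ref{thm: Optimality} controlling the passage from $M(\Im z)$ to $M(R)$. The refinements under (ii$'$) are obtained by sharpening the constant $N(2\delta)$ to $\gamma_0$ in that last step, not by widening the imaginary window to $[R-s_1,R+s_1]$.

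So the gap is twofold: (1) the mechanism you propose (sampled Gaussian density on a segment at depth $-\delta/M(R)$) is not shown to produce closed-form, simultaneously controllable expressions for $\CT\mu$, $\LT\mu$, $\LT'\mu$ — and you yourself concede you do not know how to reconcile the competing requirements; (2) the two claimed sources of the key factors ($M^{1/2}$ from Cauchy--Schwarz; $K^{\gamma}$ from being a fixed multiple $\delta$ deeper than $1/M(R)$) do not correspond to the actual computation, which rests on the roots-of-unity collapse, Stirling's formula, and condition (ii).
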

 
 \begin{remark}\label{rem: Y}
  For $\Im z = R$ the inequality (\ref{eq: optimality 1}) holds also in the reverse direction (for a different value of $C$). This will be indicated in the proof.
 \end{remark}

 \begin{proof}
  Let $\delta>1/M(0)$ be a real number to be fixed later. Let $k\in\N_{k_0}$ to be fixed later. Let us define
  \begin{align*}
   w = iR - \delta,\, q = e^{2\pi i/(k+1)},\, \delta A = kl(k)
  \end{align*}
  where $l:\R_+\rightarrow(0,\infty)$ is a strictly increasing function such that $l(t)\geq\beta\log(e+t)$ for some $\beta\geq 1$ to be fixed later. By $\delta_{z_0}$ we denote the Dirac-measure at $z_0\in\C$. Let us define
  \begin{align*}
   \mu = \frac{\tau}{R} \sum_{j=0}^k q^j\delta_{w+A^{-1}q^j}.
  \end{align*}
  The constant $\tau>0$ will be chosen later. Before we go on we state a simple lemma which will be frequently applied in the following.
  \begin{lemma}
   Let $n>0$ be a real number. The function $s\mapsto s^n e^{-s}$ has a unique maximum on $\R_+$. Before this maximum the function is strictly increasing and after that maximum it is strictly decreasing.
  \end{lemma}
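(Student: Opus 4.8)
The plan is to analyse the sign of the derivative of $g(s) = s^n e^{-s}$ on the open half-line $(0,\infty)$. First I would record the boundary behaviour: $g(0) = 0$ because $n > 0$, while $g(s) > 0$ for all $s > 0$, and $g(s) \to 0$ as $s \to \infty$ since the exponential dominates any power. Together with continuity of $g$ on $\R_+$ this already guarantees that $g$ attains a strictly positive maximum at some interior point, so the real content of the lemma is the precise location of this maximum and the strict monotonicity on either side of it.

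Next I would compute, for $s > 0$,
\[
 g'(s) = n s^{n-1} e^{-s} - s^n e^{-s} = s^{n-1} e^{-s} (n - s),
\]
and observe that the prefactor $s^{n-1} e^{-s}$ is strictly positive for $s > 0$. Hence $g'(s)$ has the sign of $n - s$: it is positive on $(0, n)$, vanishes at $s = n$, and is negative on $(n, \infty)$. Combined with the continuity of $g$ on $[0,\infty)$, this yields that $g$ is strictly increasing on $[0, n]$ and strictly decreasing on $[n, \infty)$; in particular $g(s) < g(n)$ for every $s \neq n$, so $s = n$ is the unique maximum. There is essentially no obstacle here; the only mild point worth keeping in mind is that for $0 < n < 1$ the factor $s^{n-1}$ is unbounded near $0$, but since the monotonicity argument is carried out on the open interval $(0,\infty)$ and the endpoint $s=0$ is treated separately via $g(0) = 0 < g(n)$, this causes no difficulty.
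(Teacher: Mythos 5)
Your argument is correct and is exactly the approach the paper indicates: the paper remarks only that ``one can prove the lemma by simply taking the derivative of the function,'' and your sign analysis of $g'(s)=s^{n-1}e^{-s}(n-s)$ carries that out.
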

  One can prove the lemma by simply taking the derivative of the function. 
  
  \paragraph{\textbf{Part 1:} Estimation of $\LT\mu$.} We distinguish the two cases $t\leq A$ and $t>A$.
  
  \emph{Case 1: $t\leq A$.} We calculate
  
  \begin{align*}
   \LT\mu(t) &= \frac{\tau}{R}\sum_{j=0}^k q^j e^{t(w+A^{-1}q^j)} \\
   &= \frac{\tau}{R} e^{tw} \sum_{m=0}^{\infty} \frac{1}{m!}\left( \frac{t}{A} \right)^m \sum_{j=0}^k q^{(m+1)j} \\
   &= \frac{\tau}{R} \cdot e^{tw} \frac{(k+1)t^k}{A^k k!} \cdot \sum_{n=1}^{\infty}\frac{k!}{(n(k+1)-1)!} \left( \frac{t}{A} \right)^{(n-1)(k+1)} \\
   &=: \frac{\tau}{R} \cdot I \cdot II.
  \end{align*}
  Clearly $II$ is bounded from below by $1$ and bounded from above by a constant which does not depend on $k$ or $A$. Thus by Stirling's formula we get
  \begin{align*}
   \LT\mu(t) \geq c\frac{\tau}{R} \sqrt{k} e^{-\delta t} \left( \frac{e \delta t}{\delta A k} \right)^k .
  \end{align*}
  As a function in $t$ we can maximize the right-hand side by setting $\delta t=k$. If we furthermore define
  \begin{align}\label{eq: definition of tau}
   \tau = \frac{1}{\sqrt{k}}(\delta A)^k
  \end{align}
  we see that (\ref{eq: optimality 4}) is proved. Since $II$ is bounded from above we have
  \begin{align}\label{eq: LT from above}
   \LT\mu(t) \leq C\frac{\tau}{R} \sqrt{k} e^{-\delta t} \left( \frac{e \delta t}{\delta A k} \right)^k .
  \end{align}
  Again we maximize the right-hand side by setting $\delta t = k$ and plugging in (\ref{eq: definition of tau}). This leads to
  \begin{align*}
   \LT\mu(t) \leq C\frac{\tau}{R} \sqrt{k} e^{-k} \left( \frac{e}{\delta A} \right)^k  \leq \frac{C}{R}
  \end{align*}
  For $t\in[k/2\delta,2k/\delta]$ this is already what we want to have in (\ref{eq: optimality 3}).

  \emph{Case 1.1: $\delta t \leq k/2$.} In this case the maximum in (\ref{eq: LT from above}) with respect to $t$ is attained for $\delta t=k/2$. This yields
  \begin{align*}
   \abs{\LT\mu(t)} \leq C\frac{\tau}{R} \sqrt{k} e^{-\frac{k}{2}} \left( \frac{e}{2\delta A} \right)^k  
   = \frac{C}{R} \left( \frac{e}{4} \right)^{\frac{k}{2}} \leq \frac{\varepsilon}{R}
  \end{align*}
  The last inequality holds for sufficiently large $k$. We proved (\ref{eq: optimality 3}) for $\delta t \leq k/2$.

  \emph{Case 1.2: $2k \leq \delta t \leq \delta A$.} Condition (i) from Theorem \ref{thm: Optimality} yields $M_K^{-1}(c_1 t)\leq e^{\delta t/\alpha}$ for any $\alpha>0$ as long as $t$ is large enough. Thus, if we multiply (\ref{eq: LT from above}) by $M_K^{-1}(c_1 t)$ we get
  \begin{align*}
   M_K^{-1}(c_1 t)\abs{\LT\mu(t)} &\leq C\frac{\tau}{R} \sqrt{k} e^{-(1-\frac{1}{\alpha})\delta t} \left( \frac{e \delta t}{\delta A k} \right)^k \\
   &\leq \frac{C\sqrt{k}}{R} \left( \frac{2}{e^{1-\frac{2}{\alpha}}} \right)^{k} \leq \varepsilon
  \end{align*}
  for sufficiently large $k$. From the first to the second line we used that the maximum of the right-hand side of the first line is attained at $\delta t = 2k$ if $\alpha\geq2$. In the last estimate we used $e^{1-\frac{2}{\alpha}}>2$ which is true if $\alpha$ is large enough. We proved (\ref{eq: optimality 3}) for $2k\leq \delta t\leq \delta A$.
  
  \emph{Case 2: $t>A$.} Then we have
  \begin{align*}
   \abs{\LT\mu(t)} &\leq \frac{\tau}{R}(k+1)e^{-(\delta-A^{-1})t} \\
   &\leq \frac{C}{R} \sqrt{k} (\delta A)^k e^{-\delta A} e^{-(\delta-A^{-1})(t-A)}
  \end{align*}
  In the following we assume that $\delta-A^{-1}>0$ which is true for large $k$. 
  
  \emph{Case 2.1: $A<t<2A$.} In this case (using again $M_K^{-1}(c_1t)\leq e^{\delta t/\alpha}$ for large $t$) we get
  \begin{align*}
   M_K^{-1}(2c_1 A)\abs{\LT\mu(t)} &\leq \frac{C}{R} \sqrt{k} \left( kl(k) e^{-l(k)} \right)^k e^{\frac{2kl(k)}{\alpha}} \\
   &= \frac{C}{R} \sqrt{k} \left( kl(k) e^{-(1-\frac{2}{\alpha})l(k)} \right)^k \leq \varepsilon
  \end{align*}
  if we choose $\beta>1$ and let $\alpha$ satisfy $(1-\frac{2}{\alpha})^{-1}<\beta$ and if $k$ is large enough. We proved (\ref{eq: optimality 3}) for $A<t<2A$.
 
   \emph{Case 2.2: $t\geq 2A$.} If we use $\sqrt{k}(\delta A)^ke^{-\delta A}\leq 1$ for large $k$ we can calculate for an $\alpha>4$
  \begin{align*}
   M_K^{-1}(c_1 t)\abs{\LT\mu(t)} &\leq \frac{C}{R} e^{-(1-\frac{1}{kl(k)})(\delta t-\delta A)} e^{\frac{\delta t}{\alpha}} \\
   &\leq \frac{C}{R} e^{(\frac{1}{\alpha}-\frac{1}{4})\delta t} \leq \varepsilon .
  \end{align*}
  This finishes the proof of (\ref{eq: optimality 3}).
 
  \paragraph{\textbf{Part 2:} Estimation of $\CT\mu$.} First observe that as long as $z$ is no $(k+1)$-th root of unity we have
  \begin{align*}
   \sum_{j=0}^k \frac{q^j}{z-q^j} = \frac{k+1}{z^{k+1} - 1}.
  \end{align*}
  Clearly this equation must hold for some $k$-th order polynomial $p$ if one replace the term $k+1$ on the right-hand side by $p(z)$. Moreover the left-hand side is invariant under the substitution which replaces $z$ by $q z$. Thus $p(z)=p(qz)$. But this implies that $p$ is a constant. By plugging in $z=0$ we see that $p=k+1$.
 
  The observation yields for $z\in\Omega_M$
  \begin{align}\label{eq: nice formula for CTmu}
   \CT\mu(z) = \frac{\tau}{R} \frac{(k+1)A}{(A(z-w))^{k+1} - 1}.
  \end{align}
  Now it is not difficult to prove (\ref{eq: optimality 1}) for $\abs{\Im z - R}>2\delta$. The latter condition implies $\abs{z-w}>2\delta$. Thus, using (\ref{eq: nice formula for CTmu}) we get for $\abs{\Im z - R}>2\delta$ and $k$ large:
  \begin{align*}
   \abs{\CT\mu(z)} \leq C \frac{\tau}{R} kA (2\delta A)^{-k-1} \leq \frac{C\sqrt{k}}{\delta R} 2^{-k} \leq \varepsilon .
  \end{align*}
  If we don't have $\abs{\Im z - R}>2\delta$ we can merely estimate $\abs{z-w}\geq \delta - 1/M(\Im z)$. This yields for $z\in\Omega_M$ with $\abs{\Im z - R}>2\delta$ and for all $\gamma_1>1$
  \begin{align*}
   \abs{\CT\mu(z)} &\leq C \frac{\tau}{R} kA (\delta A(1-\frac{1}{\delta M(\Im z)}))^{-k-1} \\
   &\leq \frac{C\sqrt{k}}{\delta R} e^{\gamma_1\frac{k}{\delta M(\Im z)}} \\
   &\leq \frac{C\sqrt{k}}{\delta R} e^{\gamma_1 N(2\delta)\frac{k}{\delta M(R)}} \\
   &\leq \frac{C}{\delta R} \sqrt{M_K(R)} K(R)^{\frac{\gamma_1 N(2\delta)}{c_1}}.
  \end{align*}
  From the first to the second line we use the inequality $1-x\geq e^{-\gamma_1 x}$ which is valid for small $x\geq0$. If $M$ is bounded we choose $\delta$ large enough to make use of this inequality. From the second to the third line we used condition (ii) from Theorem \ref{thm: Optimality}. Choosing $\gamma=\gamma_1 N(2\delta)$ we get (\ref{eq: optimality 1}). Concerning Remark \ref{rem: Y} a reverse inequality for $\Im z = R$ can be proved analogously but in an even simpler way by using the inequality $1-x\leq e^{-x}$ which is valid for all $x\geq0$.
  
  \paragraph{\textbf{Part 3:} Estimation of $\LT'\mu$.} Finally we want to estimate the derivative of $\LT\mu$. 
  
  \emph{Case 1: $t\geq A$.} In this case we directly get for large $k$
  \begin{align*}
   \abs{\LT'\mu(t)} &\leq \frac{\tau}{R} (k+1) (R+A^{-1}) e^{-(\delta-A^{-1})t} \\
   &\leq C \frac{\sqrt{k}}{R} (\delta A)^k R e^{-\delta A} \leq \varepsilon .
  \end{align*}
  
  \emph{Case 2: $t<A$.} Let us first get a different representation of $\LT\mu$:
  \begin{align*}
   \LT'\mu(t) &= \frac{\tau}{R} \sum_{j=0}^k q^j(w+A^{-1}q^j) e^{(w+A^{-1}q^j)t} \\
   &= \frac{\tau}{R} e^{tw} \sum_{m=0}^{\infty} \frac{1}{m!} \left(\frac{t}{A}\right)^m \sum_{j=0}^k (wq^{(m+1)j}+A^{-1}q^{(m+2)j}) \\
   &= \frac{w}{R} \tau e^{tw} \frac{(k+1)t^k}{A^k k!} \sum_{n=1}^{\infty} \frac{k!}{(n(k+1)-1)!} \left( \frac{t}{A} \right)^{(n-1)(k+1)}
      \left[ 1 + \frac{n(k+1)-1}{wt} \right] .
  \end{align*}
  Note that if $t>t_0>0$ the sum at the end of the calculation is bounded by a constant which only depends on $t_0$.
  \begin{align}\nonumber
   \abs{\LT'\mu(t)} &\leq C\tau \sqrt{k} e^{-\delta t} \left( \frac{e \delta t}{\delta A k} \right)^k \left[ 1 + \frac{k}{Rt}\right] \\ \label{eq: LT' estimate}
   &\leq C e^{-\delta t} \left( \frac{e \delta t}{k} \right)^k \left[ 1 + \frac{k}{Rt}\right] 
  \end{align}
  Note that (\ref{eq: LT' estimate}) as a function in $t$ is increasing for $\delta t < k-1$ and decreasing for $\delta t > k$. Therefore we see that $\abs{\LT'\mu(t)}$ bounded by a constant not depending on $t$. This shows (\ref{eq: optimality 2}) for $k/2\delta \leq t \leq 2k/\delta$.

  \emph{Case 2.1: $\delta t \leq k/2$.} The maximum in (\ref{eq: LT' estimate}) is then attained for $\delta t = k/2$. This yields
  \begin{align*}
   \abs{\LT'\mu(t)} &\leq C e^{-\frac{k}{2}} \left( \frac{e}{2} \right)^k \leq C \left( \frac{e}{4} \right)^{\frac{k}{2}} \leq \varepsilon
  \end{align*}
  if $k$ is large enough.
  
    \emph{Case 2.2: $2k\leq \delta t \leq A$.} The maximum in (\ref{eq: LT' estimate}) is then attained for $\delta t = 2k$. This yields
  \begin{align*}
   \abs{\LT'\mu(t)} &\leq C e^{-2k} \left( 2e \right)^k \leq C \left( \frac{2}{e} \right)^{\frac{k}{2}} \leq \varepsilon
  \end{align*}
  if $k$ is large enough. This finishes the proof of Lemma \ref{lem: Optimality}.
 \end{proof}

 \begin{proof}[Proof of Theorem \ref{thm: Optimality}]
  For an $\varepsilon_0>0$ to be chosen later we define a sequence $(\varepsilon_n)$ by $\varepsilon_n=2^{-n}\varepsilon_0$. There exists a $\delta>0$, an increasing sequence of natural numbers $(k_n)$ and a sequence of measures $(\mu_n)$ according to Lemma \ref{lem: Optimality}. We may assume that $([R_n-2\delta,R_n+2\delta])$ and $([k_n/2\delta,2k_n/\delta])$ are sequences of pairwise disjoint intervals. Let us define
  \begin{align*}
   f(t) = \sum_{n=1}^{\infty} \LT\mu_n(t) \text{ for } t\geq0.
  \end{align*}
  The sum is uniformly convergent because of (\ref{eq: optimality 3}). The function $f$ is therefore continuous and since the sequence of derivatives converges uniformly (by (\ref{eq: optimality 2})) we see that $f$ has a bounded weak derivative given by
  \begin{align*}
   f'(t) = \sum_{n=1}^{\infty} \LT'\mu_n(t) \text{ for } t\geq0.
  \end{align*}
  By a similar argument the Laplace transform has the form
  \begin{align*}
   \fhat(z) = \sum_{n=1}^{\infty} \CT\mu_n(z)  \text{ for } z\in\Omega_M.
  \end{align*}
  Here the sum converges uniformly on compact subsets of $\overline{\Omega_M\cup\C_+}$ (by (\ref{eq: optimality 1})). We already know that the derivative of $f$ is bounded. The estimate (\ref{eq: optimality fhat estimate}) follows immediately from (\ref{eq: optimality 1}). It remains to prove (\ref{eq: optimality decay rate}). Let us set $t_n=k_n/\delta$ then we deduce from (\ref{eq: optimality 3}) and (\ref{eq: optimality 4}) that
  \begin{align*}
   \abs{f(t_n)} &\geq \frac{c}{R_n} - \varepsilon_0 \sum_{j\neq n} \frac{2^{-j}}{\max\{R_j, M_K^{-1}(c_1 t_n)\}} \\
   &\geq \frac{c}{R_n} - \varepsilon_0 \sum_{j\neq n} \frac{2^{-j}}{R_n} \\
   &\geq \frac{c}{R_n} = \frac{c}{M_K^{-1}(c_1 t_n)}.
  \end{align*}
  In the last line we chose $\varepsilon_0$ small enough.
 \end{proof}

\begin{remark}\label{rem: m}
 By the same technique one can also prove the optimality of Theorem \ref{thm: M_K theorem - Laplace variant} for $m>1$. To achieve this one just has to define the measure $\mu$ in Lemma \ref{lem: Optimality} by $\mu = \tau R^{-m} \sum_{j=0}^k q^j\delta_{w+A^{-1}q^j}$.
\end{remark}

\begin{remark}\label{rem: YY}
 With the help of remark \ref{rem: Y} one easily sees that for $\Im z = R_n$ the inequality (\ref{eq: optimality fhat estimate}) holds also in the reverse direction (for a different constant $C$).
\end{remark}

\subsection{On the optimality of the constant $c_1$ in Theorem \ref{thm: M_K theorem - Laplace variant}}\label{sec: Optimality of c1}
The literature seems not to pay much attention to the constant $c_1$ appearing in Theorem \ref{thm: M_K theorem - Laplace variant}. If we are interested in polynomial decay the constant does not influence the decay rate much. However, if for example $M_K^{-1}(t)=\exp(t^{\alpha})$ for some $\alpha\in(0,1]$ we immediately see that $c_1$ influences the decay rate in a crucial way. 
The aim of this subsection is to give a partial answer concerning the question of the optimality of $c_1$. Under not too restrictive conditions on $M$ and $K$ we show that Theorem \ref{thm: M_K theorem - Laplace variant} is valid for any $c_1<1$ and false for $c_1>1$. Unfortunately we have to exclude the important special case of exponential decay from our discussion.

\begin{theorem}\label{thm: Optimality c1}
 Let $p=\infty$. (a) In addition to the assumptions in Theorem \ref{thm: M_K theorem - Laplace variant} assume that $K$ increases faster than any polynomial and assume that $K(s)\geq c(1+s)^{-m}M(2s)^2$. Then (\ref{eq: rate}) holds for all $c_1<1$. (b) Let $M,K$ satisfy the assumptions of Theorem \ref{thm: M_K theorem - Laplace variant}. Assume in addition that for some $\gamma_0\geq1$
 \begin{align}\label{eq: additional condition on M}
  \forall s_1>0 \exists s_0>0 \forall s\geq s_0,s'\leq s_1: M(s+s')\leq \gamma_0 M(s).
 \end{align}
 Assume furthermore that $K$ increases faster than any polynomial in $sM(s)$. Let $c_1>\gamma_0$. Then there exists a locally integrable function $f:\R_+\rightarrow\C$, satisfying the assumptions of Theorem \ref{thm: M_K theorem - Laplace variant} such that (\ref{eq: rate}) does not hold for this choice of $c_1$.
\end{theorem}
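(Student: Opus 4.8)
The plan is to prove the two parts separately, in both cases returning to the proofs in Sections \ref{sec: Proof of Theorem Fourier}--\ref{sec: Optimality} and tracking the multiplicative constants that were left implicit there. \emph{Part (a).} First I would redo the reduction from the Laplace to the Fourier setting (the proof of Theorem \ref{thm: M_K theorem - Laplace variant} via Lemma \ref{lem: Fourier vs Laplace}(b)) keeping constants sharp. With $M_2(s)=M(s+1/M(s))$ and $K_2(s)=K(s+1/M(s))+C_f(1+s)^{-m}M_2(s)^2+C_f'$, the extra hypothesis $K(s)\ge c(1+s)^{-m}M(2s)^2$ lets one bound the middle term by a constant multiple of $K(s+1/M(s))$, and since $K$ grows faster than any polynomial one also swallows the additive constant. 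Hence for every $\rho,\eta>0$ one gets $M_2(s)\log K_2(s)\le(1+\eta)\,M((1+\rho)s)\log K((1+\rho)s)$ for large $s$, which translates into $w_{(M_2)_{K_2}}(at)\ge(1+\rho)^{-1}w_{M_K}\!\big(\tfrac{a}{1+\eta}t\big)$. So it suffices to establish the conclusion \eqref{eq: rate - Fourier variant} of Theorem \ref{thm: M_K theorem - Fourier variant}, applied with $M_2,K_2$ in place of $M,K$, for every $c_1<1$.

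For that I would revisit the estimate of $J_2$. The only lossy step is the passage to \eqref{eq: no observation}: Stirling's formula and the $N+1$ summands contribute only a factor polynomial in $N$, and a polynomial in $N$ is $\le(1+\eta)^N$ for large $N$ and any $\eta>0$. Thus $\norm{R^m J_2(t,R)}\le C_\eta R^{m+1}K(R)\big(\tfrac{(1+\eta)NM(R)}{et}\big)^N B$, with $B$ still bounded by condition (ii). Choosing $N=\lfloor\lambda t/M(R)\rfloor$ and $R=w_{(M_2)_{K_2}}(c_1 t)$ (so $\log K(R)\asymp c_1 t/M(R)$) gives $\norm{R^m J_2(t,R)}\le C_\eta\,R^{m+1}K(R)^{\,1-\lambda(1-\log(1+\eta)-\log\lambda)/c_1}$, and since $\sup_{\lambda>0}\lambda(1-\log\lambda)=1$ (attained at $\lambda=1$), for any $c_1<1$ one picks $\lambda$ near $1$ and $\eta$ small so that the exponent is negative; because $K$ grows faster than any polynomial, $R^{m+1}K(R)^{-\kappa}$ is then bounded. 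Together with Lemma \ref{lem: final J1 estimate}(i) for $J_1$ this gives \eqref{eq: rate - Fourier variant} for all $c_1<1$, and with the previous paragraph, part (a).

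\emph{Part (b).} Here I would run the construction behind Lemma \ref{lem: Optimality} and Theorem \ref{thm: Optimality}, but applied to the pair $(M,\tilde K)$ with $\tilde K:=\max\{2,K^{1/\gamma}\}$, where $\gamma>\gamma_0$ is the exponent that \eqref{eq: additional condition on M} (hypothesis (ii$'$) of Lemma \ref{lem: Optimality}) makes available in \eqref{eq: optimality 1}, chosen so close to $\gamma_0$ that $\gamma<c_1$, and with target constant $c_1^{\mathrm{lem}}$ chosen so that $c_1^{\mathrm{lem}}\gamma\in(\gamma_0,c_1)$. Since $\tilde K^{\gamma}\asymp K$, the resulting $\fhat$ obeys $\abs{\fhat(z)}\le \tfrac{C}{\abs{\Im z}}M(\abs{\Im z})^{1/2}K(\abs{\Im z})+\varepsilon_0$ on the relevant strips; using $K\ge M$ (condition (i)) together with the fact that $K$ dominates every power of $sM(s)$, the prefactor $\tfrac1s M(s)^{1/2}$ is harmless (after, if necessary, a rescaling of the measures that is immaterial in the regime where the statement is not already vacuous), so $f$ satisfies the hypotheses of Theorem \ref{thm: M_K theorem - Laplace variant}, in particular \eqref{eq: fhat condition}, and $f'\in L^\infty(\R_+)$. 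On the other hand $M_{\tilde K}=M\log\tilde K\asymp\gamma^{-1}M_K$, so by \eqref{eq: optimality decay rate} there is a sequence $t_n\to\infty$ with $\abs{f(t_n)}\ge c\,/\,w_{M_K}(c_1^{\mathrm{lem}}\gamma\,t_n)$; since $c_1^{\mathrm{lem}}\gamma<c_1$,
\[
 w_{M_K}(c_1 t_n)^m\abs{f(t_n)}\ \ge\ c\,\frac{w_{M_K}(c_1 t_n)^m}{w_{M_K}(c_1^{\mathrm{lem}}\gamma\,t_n)}\ \longrightarrow\ \infty
\]
(the ratio diverges precisely in the range where $c_1$ influences the rate, which is the only case in which the assertion has content), so \eqref{eq: rate} fails for this $c_1$.

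The main obstacle is the quantitative bookkeeping in part (a): one must check that replacing the convenient constant $C_2$ in \eqref{eq: no observation} by $1+\eta$ leaves the bound on $B$ intact (it does, since condition (ii) enters only through $\log\log K$, which is insensitive to a constant change in $N$) and that the optimization over $\lambda$ genuinely recovers the threshold $1$. In part (b) the delicate point is the interplay of parameters: one needs \eqref{eq: additional condition on M} to bring the exponent of $K$ down near $\gamma_0$, and then a choice of $c_1^{\mathrm{lem}}$ small enough that the construction, rescaled so as to respect the \emph{sharp} bound \eqref{eq: fhat condition}, still decays no faster than $w_{M_K}(c_1^{\mathrm{lem}}\gamma\,t)^{-m}$.
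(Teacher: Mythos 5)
Your part (a) is essentially the paper's argument: the paper simply observes that the constant $C_2$ in (\ref{eq: no observation}) can be taken arbitrarily close to $1$, and that the $c<1$ loss in the Laplace-to-Fourier reduction of Section~\ref{sec: proof of Theorem Laplace} can likewise be pushed to $1$ using the extra hypothesis $K(s)\geq c(1+s)^{-m}M(2s)^2$. Your $\lambda$-optimization with $\sup_\lambda\lambda(1-\log\lambda)=1$ is exactly the substitution $\lambda=1/C_2$ made explicit, and your remark that the $B$-term only sees $\log\log K$ and is therefore insensitive to the change of $C_2$ is the same sanity check. So this part is correct and matches the paper.

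For part (b) you take a slightly different route, and the difference is instructive. The paper applies Theorem~\ref{thm: Optimality} directly to $(M,K)$ with the stated $c_1$, chooses $\gamma\in(\gamma_0,c_1)$, absorbs the prefactor $\tfrac{C}{R}M^{1/2}$ into $K^{1-\gamma/c_1}$, and then asserts that (\ref{eq: optimality decay rate}) already falsifies (\ref{eq: rate}). You instead run the construction on $(M,\tilde K)$ with $\tilde K=K^{1/\gamma}$ and a constant $c_1^{\mathrm{lem}}$ with $c_1^{\mathrm{lem}}\gamma\in(\gamma_0,c_1)$; this is really just a reparametrization of the same construction, and taking $c_1^{\mathrm{lem}}>1$ strictly would let you absorb $\tfrac{C}{\abs{\Im z}}M^{1/2}$ into $K^{1-1/c_1^{\mathrm{lem}}}$ cleanly instead of invoking a vague ``rescaling of the measures''. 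What you add that the paper glosses over is the final step: since your lower bound on $|f(t_n)|$ is in terms of $w_{M_K}(c_1^{\mathrm{lem}}\gamma\,t_n)$ with $c_1^{\mathrm{lem}}\gamma<c_1$, you are able to look at the ratio $w_{M_K}(c_1 t_n)/w_{M_K}(c_1^{\mathrm{lem}}\gamma\,t_n)$ and note that its divergence is what gives unboundedness of $w_{M_K}(c_1 t)f(t)$ (a $\limsup\geq c>0$ at $c_1$ itself, which is all the paper's direct application supplies, does not by itself contradict $\in L^\infty$). That is a genuine improvement in rigor. However, the parenthetical ``the ratio diverges precisely in the range where $c_1$ influences the rate, which is the only case in which the assertion has content'' is not a proof: the divergence of that ratio is not a consequence of the hypotheses of Theorem~\ref{thm: Optimality c1} as written (e.g.\ $M$ constant, $K(s)=e^s$ satisfies all listed conditions yet makes $M_K$ linear and the ratio bounded). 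If you want to make part (b) watertight you must either show that divergence follows from the hypotheses, or add it as an explicit hypothesis; as it stands this is the residual gap in your write-up.
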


\begin{remark}
 It is not difficult to find functions $M$ which satisfy (\ref{eq: additional condition on M}) for \emph{any} $\gamma_0>1$. Take for example $M$ to be a constant, a logarithm or a polynomial. It is also possible to take $M(s)=\exp(s^{\alpha})$ for $\alpha\in(0,1)$. On the other hand the example $M(s)=\exp(s)$ does not satisfy this condition for any $\gamma>1$. 
\end{remark}

\begin{remark}
 We think that the condition that $K$ increases faster than a polynomial in $s$ is natural in both parts of the theorem. On the other hand we don't know whether the growth condition on $K$ in terms of $M(s)$ or $M(2s)$ is a necessary assumption for the conclusion of Theorem \ref{thm: Optimality c1} to hold. Concerning (a) this condition is only necessary in the proof since we do not know whether Lemma \ref{lem: Fourier vs Laplace} is valid for $C_f=0$. Concerning (b) we need it because of the factor $M(\abs{\Im z})^{1/2}$ appearing in (\ref{eq: optimality fhat estimate}).
\end{remark}

\begin{proof}
 (a) The claim is proved by having a look into the proof of Theorem \ref{thm: M_K theorem - Laplace variant}. It is not difficult to see that in (\ref{eq: no observation}) is true for any $C_2>1$. To get (\ref{eq: J2 final estimate}) one has to choose $c_1$ in such a way that $K(R)^{\frac{1}{c_1C_2}-1}\geq cR^{m+1}$. Since $K$ grows super-polynomially in $s$ this means $c_1<1/C_2$. Now observe that in in the final step of the proof in Section \ref{sec: proof of Theorem Laplace}, before the proof of Lemma \ref{lem: Fourier vs Laplace}, one can choose any $c<1$. Here we use that $K(s) \geq c(1+s)^{-m}M(2s)^2$. Since $C_2$ can be chosen arbitrary close to $1$ the first assertion is proved.
 
 (b) Let $\gamma_0<\gamma<c_1$. First observe that the assumptions  of Theorem \ref{thm: Optimality} (including (ii')) are satisfied (concerning $m>1$ see also Remark \ref{rem: m}). Thus there exists a locally integrable function $f:\R_+\rightarrow\C$ such that the conclusion of Theorem \ref{thm: Optimality} is satisfied. Since $K$ grows faster than any polynomial of $M(s)$ we can withdraw the factor $M(\abs{\Im z})^{1/2}$ from (\ref{eq: optimality fhat estimate}) if we replace $\gamma/c_1$ by $1$ in this inequality. Now the function satisfies the assumptions of Theorem \ref{thm: M_K theorem - Laplace variant} but it fails to satisfy (\ref{eq: rate}) for our choice of $c_1$ by Theorem \ref{thm: Optimality}.
\end{proof}

 \section{Application: Local decay rates}\label{sec: Applications: decay of waves}
 Our results can be applied to calculate local decay rates for $C_0$-semigroups. To fix some of our notation let $T=(T(t))_{t\geq0}$ be a $C_0$-semigroup on a Banach space $X$ with generator $A:D(A)\rightarrow X$. We denote 
 \begin{equation}\nonumber
  \omega_0(T) = \inf \left\{ \omega\in\R; (t\mapsto \norm{e^{-\omega t}T(t)}) \text{ is bounded on $\R_+$} \right\}. 
 \end{equation}
 In Subsection \ref{sec: Local energy decay} we apply the abstract setting from Subsection \ref{sec: Decay of C0-semigroups} to local energy decay for the wave equation in an odd-dimensional exterior domain. In Subsection \ref{sec: Local energy decay} we naturally restrict our considerations to the case $p=\infty$. A discussion of $L^p$-rates for semigroups and an application to the wave equation can be found in \cite[Section 6]{BattyBorichevTomilov2016}.
 
  \subsection{Local decay of $C_0$-semigroups}\label{sec: Decay of C0-semigroups}
  The following is an immediate consequence of our main result Theorem \ref{thm: M_K theorem - Laplace variant}.
  \begin{corollary}[to Theorem \ref{thm: M_K theorem - Laplace variant}]\label{cor: M_K theorem - Laplace variant}
   Let $T$ be a $C_0$-semigroup on a Banach space $(X,\norm{\cdot})$ with generator $A$ and $\omega_0(T)\geq0$. Let $P_1$ and $P_2$ be two bounded operators on $X$, let $x\in X$ and let $1<p\leq\infty$. 
   Let $M, K:\R_+\rightarrow [2,\infty)$ be continuous and increasing functions satisfying
   \begin{enumerate}
    \item[(i)] $\forall s>1: K(s) \geq \max\{s, M(s)\}$,
    \item[(ii)] $\exists\varepsilon\in(0,1): K(s) = O\left(e^{e^{(sM(s))^{1-\varepsilon}}}\right)$ as $s\rightarrow\infty$.
   \end{enumerate}
   Let $G(z)=P_2(z-A)^{-1}P_1 x$ for $\Re z > 0$. Assume that $G$ extends analytically to the domain $\Omega_M\cup\C_+$ and satisfies the estimate
   \begin{equation}\label{eq: infty resolvent growth}
    \norm{G(z)} \leq K(\abs{\Im z}) \text{ for } z\in \Omega_M .
   \end{equation}
   Assume furthermore that $(t\mapsto \norm{P_2T(t)P_1x})\in L^p(\R_+)$. Then for all $m\in\N_1$ and $\omega > \omega_0(T)$ we have
   \begin{equation}\nonumber
    (t\mapsto w_{M_K}(t)^m\norm{P_2 T(t) (\omega-A)^{-m} P_1 x}) \in L^p(\R_+) 
   \end{equation}
   where $M_K(s)=M(s)\log(K(s))$.
  \end{corollary}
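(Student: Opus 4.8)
The plan is to apply Theorem~\ref{thm: M_K theorem - Laplace variant} to the function
\[
  f(t):=P_2 T(t)(\omega-A)^{-m}P_1 x .
\]
Since $(\omega-A)^{-m}P_1 x\in D(A^m)$, $f$ is smooth with $f^{(j)}(t)=P_2 T(t)A^j(\omega-A)^{-m}P_1 x$. Expanding $A^m(\omega-A)^{-m}=\bigl(\omega(\omega-A)^{-1}-I\bigr)^m$ via $A(\omega-A)^{-1}=\omega(\omega-A)^{-1}-I$ gives
\[
  f^{(m)}(t)=(-1)^m P_2 T(t)P_1 x+\sum_{k=1}^m\binom{m}{k}(-1)^{m-k}\omega^k\,P_2 T(t)(\omega-A)^{-k}P_1 x .
\]
The first summand lies in $L^p(\R_+;X)$ by hypothesis; for $1\le k\le m$ I would use $(\omega-A)^{-k}P_1 x=\frac{1}{(k-1)!}\int_0^\infty s^{k-1}e^{-\omega s}T(s)P_1 x\,ds$ and Young's inequality (the kernel $s\mapsto s^{k-1}e^{-\omega s}$ being in $L^1(\R_+)$) to see that $t\mapsto P_2 T(t)(\omega-A)^{-k}P_1 x$ is in $L^p(\R_+;X)$ as well. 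Hence $f^{(m)}\in L^p(\R_+;X)$; in particular $f$ is polynomially bounded, so $\fhat$ is well defined on the open right half-plane (Remark~\ref{rem: fhat well-defined}).

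The crux is to extend $\fhat$ analytically across the imaginary axis. For $\Re z>\omega_0(T)$ one has $\fhat(z)=P_2(z-A)^{-1}(\omega-A)^{-m}P_1 x$, and iterating the resolvent identity $(z-A)^{-1}(\omega-A)^{-1}=\bigl((z-A)^{-1}-(\omega-A)^{-1}\bigr)/(\omega-z)$ yields
\[
  \fhat(z)=\frac{G(z)}{(\omega-z)^m}-\sum_{j=1}^m\frac{P_2(\omega-A)^{-j}P_1 x}{(\omega-z)^{m-j+1}}=:\frac{G(z)}{(\omega-z)^m}+R(z),
\]
where $R$ is an explicit $X$-valued rational function whose only pole is at $z=\omega$. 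Since $G$ is analytic on $\Omega_M\cup\C_+$, the right-hand side is analytic there except possibly at $\omega\in\C_+$; but the Taylor coefficients of $G$ at $\omega$ are $\tfrac{1}{j!}G^{(j)}(\omega)=(-1)^j P_2(\omega-A)^{-(j+1)}P_1 x$, so $R$ is exactly minus the principal part of $(\omega-z)^{-m}G(z)$ at $\omega$; hence that singularity cancels, and by uniqueness of analytic continuation $\fhat$ extends analytically to $\Omega_M\cup\C_+$. I expect this pole cancellation to be the one genuinely delicate point: the naive candidate $(\omega-z)^{-m}G(z)$ has a spurious pole at $\omega\in\C_+$, and one has to produce the correction $R$ from the resolvent identity to see it is removable.

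It remains to bound $\fhat$ on $\Omega_M$ and to invoke the theorem. Because $\omega>\omega_0(T)\ge0$ while $\Omega_M\subset\{\Re z<0\}$, we have $\abs{\omega-z}\ge\max\{\omega,\abs{\Im z}\}$ on $\Omega_M$, so $\norm{(\omega-z)^{-m}G(z)}\le\max\{1,\omega^{-m}\}K(\abs{\Im z})$ there, while $\norm{R(z)}$ is bounded by a constant; hence $\norm{\fhat(z)}\le C_0 K(\abs{\Im z})$ on $\Omega_M$ for some $C_0\ge1$. Put $\tilde K:=C_0 K$: one readily checks that $\tilde K$ is continuous, increasing, maps into $[2,\infty)$, still satisfies (i) and (ii) (since $\tilde K\ge K$ and $\tilde K=O(K)$), and that $M_{\tilde K}(s)=M(s)\log\tilde K(s)$ is comparable to $M_K(s)$, so $w_{M_{\tilde K}}$ coincides with $w_{M_K}$ up to rescaling time by a constant. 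Applying Theorem~\ref{thm: M_K theorem - Laplace variant} to $f$ with the pair $(M,\tilde K)$ produces a $c_1>0$ with $\bigl(t\mapsto w_{M_{\tilde K}}(c_1 t)^m\norm{f(t)}\bigr)\in L^p(\R_+)$, and the comparison $M_{\tilde K}\asymp M_K$ turns this (after shrinking $c_1$ if necessary) into $\bigl(t\mapsto w_{M_K}(c_1 t)^m\norm{P_2 T(t)(\omega-A)^{-m}P_1 x}\bigr)\in L^p(\R_+)$, i.e.\ the asserted statement (with the constant $c_1$ furnished by that theorem).
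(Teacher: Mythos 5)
Your proof follows the same strategy as the paper's: define $f(t)=P_2 T(t)(\omega-A)^{-m}P_1 x$, express $f^{(m)}$ via $A^m(\omega-A)^{-m}=(\omega(\omega-A)^{-1}-I)^m$ and $\fhat$ via the iterated resolvent identity, verify the hypotheses of Theorem~\ref{thm: M_K theorem - Laplace variant}, and apply it. If anything you are slightly more careful than the paper, which displays a pointwise bound $\norm{P_2T(t)(\omega-A)^{-1}P_1x}\le\norm{P_2T(t)P_1x}$ that should really be read as the $L^p$ estimate you obtain from Young's/Minkowski's inequality, and which leaves implicit the removability of the apparent pole of $(\omega-z)^{-m}G(z)$ at $z=\omega$ that you address explicitly.
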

  
  \begin{remark}
   Observe that the condition $(t\mapsto \norm{P_2T(t)P_1x})\in L^p(\R_+)$ is trivially satisfied if $T$ is a bounded $C_0$-semigroup and $p=\infty$. If in this case we also have that $A$ is invertible then - as is clear from the proof - one can also take $\omega=0$. In in the case $P_1=P_2=1$ we note that if $p\neq\infty$ and $(t\mapsto \norm{T(t)x})\in L^p(\R_+)$ is true \emph{for all} $x\in X$ then by Datko's theorem (see e.g. \cite[Theorem 5.1.2]{ArendtBattyHieberNeubrander} the semigroup is automatically exponentially stable.
  \end{remark}
  \begin{remark}
   In the particular case $P_1=P_2=1$ one typically assumes that the resolvent extends continuously to the imaginary axis and satisfies an estimate $\norm{(is-A)^{-1}}\leq M(\abs{s})$ for $s\in\R$. This then implies that the resolvent extends analytically to $\Omega_M$ and it satisfies (\ref{eq: infty resolvent growth}) with $K$ being a multiple of $M$ in a slightly smaller domain. So in this situation our corollary does not improve known results. 
   
   However, our main interest in applying this theorem is to consider the case where $P_1$ and $P_2$ are not the identity. We think that a typical situation is that $M$ is a slowly increasing function (possibly constant) and $K$ is a (possibly much) faster increasing function. That is, we assume that the perturbed resolvent extends to a relatively large domain to the left of the imaginary axis, but only has to satisfy a mild growth condition. We illustrate this philosophy in Subsection \ref{sec: Local energy decay}.
  \end{remark}

  \begin{proof}
   Let us define $f(t)=P_2 T(t) (\omega-A)^{-m} P_1 x$. Then we have for $t>0$ and for $z\in\Omega_M$
   \begin{align*}
    f^{(m)}(t) = P_2 T(t) [\omega(\omega-A)^{-1} - 1]^m P_1 x \text{ and } \\
    \fhat(z) = \sum_{j=0}^{m-1} (\omega-z)^{-(j+1)}P_2(\omega-A)^{-(m-j)}P_1 x + (\omega-z)^{-m}G(z) .
   \end{align*}
   The second line immediately implies (\ref{eq: fhat condition}) up to a constant factor. The first line implies $\norm{f^{(m)}}\in L^p(\R_+)$ since
   \begin{align*}
    \norm{P_2T(t)(\omega-A)^{-1}P_1x} = \norm{\int_{0}^{\infty} P_2 e^{-\omega\tau} T(t+\tau) P_1 x d\tau} \leq \norm{P_2T(t)P_1x} .
   \end{align*}
   Thus the conclusion of the corollary follows from Theorem \ref{thm: M_K theorem - Laplace variant}.
  \end{proof}

  \subsection{Local energy decay for waves in exterior domains}\label{sec: Local energy decay}
  We want to show that Corollary \ref{cor: M_K theorem - Laplace variant} applies naturally to local energy decay for waves in exterior domains. It improves known decay rates and even simplifies the proofs. 
  
  Let $\Omega\subsetneqq\R^d$ be a connected open set with bounded complement and non-empty $C^{\infty}$-boundary. The dimension $d$ is assumed to be at least $2$. We consider the wave equation on this domain:
  \begin{equation}\label{eq: exterior wave equation}
   \left\{
   \begin{array}{lr}
    u_{tt}(t,x) - \Delta u(t,x) = 0 &  (t\in(0,\infty), x\in\Omega), \\
    u(t,x) = 0 & (t\in(0,\infty), x\in\partial\Omega), \\
    u(0,x) = u_0(x), u_t(0,x) = u_1(x) & (x\in\Omega) .
    \end{array}
   \right.
  \end{equation}
  Let us fix a radius $r>0$ such that the obstacle $K=\R^d\backslash\Omega$ is included in the open ball $B_r$ of radius $r$ and center $0$. We define a \emph{state} (at time $t$) of the system by $\xnice(t):=(u,v)(t):=(u(t),u_t(t))$. We define the local energy of a state by
  \begin{equation}\label{eq: local energy}
   E^{loc}(\xnice) = \int_{\Omega\cap B_r} \abs{\nabla u}^2 + \abs{v}^2 dx .
  \end{equation}
  Clearly equation (\ref{eq: local energy}) is well defined for all $u\in C_c^{\infty}(\Omega)$ and $v\in L^2(\Omega)$. Therefore it is also well defined on the \emph{energy space}
  \begin{equation}\nonumber
   \Ho = H_D^1(\Omega) \times L^2(\Omega),
  \end{equation}
  where $H_D^1(\Omega)$ is the completion of $C_c^{\infty}(\Omega)$ under the norm given by the quadratic form $u\mapsto\int_{\Omega}\abs{\nabla u}^2$.
  
  The wave equation (\ref{eq: exterior wave equation}) on the energy space $\Ho$ can be reformulated in the language of $C_0$-semigroups. Therefore we write $\xnice(t)=(u(t),u_t(t))$ set $\xnice_0=(u_0,u_1)$ and write
  \begin{equation}\label{eq: exterior CP}
    \left\{
    \begin{aligned}
    \dot{\xnice}(t) = \A \xnice(t) , \\
    \xnice(0) = \xnice_0 \in \Ho 
   \end{aligned}
   \right.
  \text{ where }
   \A = \left(
   \begin{array}{cc}
    0       & 1 \\
    \Delta_D  & 0     \\
   \end{array}\right)
   \text{ with }
   D(\A) = D(\Delta_D)\times H^1_D(\Omega) .
  \end{equation}
  The Dirichlet-Laplace operator $\Delta_D$ has the domain $D(\Delta_D)=\{u\in H^1_D(\Omega); \Delta u\in L^2(\Omega)\}$, where $\Delta$ denotes the Laplace operator in the sense of distributions. It can be proved that the wave operator $\A$ is skew-adjoint (see e.g. \cite[Theorem V.1.2]{LaxPhillips1967}). Therefore the following theorem follows by Stone's theorem (see e.g. \cite[Appendix 1, Theorem 2]{LaxPhillips1967}).
  \begin{theorem}
   The wave operator $\A$ generates a unitary $C_0$-group on $\Ho$.
  \end{theorem}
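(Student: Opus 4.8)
The plan is to verify the hypotheses of Stone's theorem: that $\A$ is a densely defined skew-adjoint operator on the Hilbert space $\Ho$, equipped with the inner product $\dual{(u_1,v_1)}{(u_2,v_2)} = \int_{\Omega}\nabla u_1\cdot\overline{\nabla u_2}\,dx + \int_{\Omega} v_1\overline{v_2}\,dx$. First I would recall that $\Delta_D$ is a nonpositive self-adjoint operator on $L^2(\Omega)$: it is the operator associated by the first representation theorem with the closed nonnegative symmetric form $u\mapsto\int_{\Omega}\abs{\nabla u}^2\,dx$ on $H^1_D(\Omega)$, and the identification of its domain with $\{u\in H^1_D(\Omega);\ \Delta u\in L^2(\Omega)\}$ is a standard consequence of elliptic regularity and the smoothness of $\partial\Omega$. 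In particular $I-\Delta_D$ is boundedly invertible on $L^2(\Omega)$ and maps $L^2(\Omega)$ onto $D(\Delta_D)$.

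Second, I would check that $\A$ is skew-symmetric on $D(\A)=D(\Delta_D)\times H^1_D(\Omega)$. For $(u,v),(\tilde u,\tilde v)\in D(\A)$ one has, using that the form domain of $\Delta_D$ is $H^1_D(\Omega)$ and $\int_{\Omega}(\Delta_D u)\overline{\tilde v}\,dx = -\int_{\Omega}\nabla u\cdot\overline{\nabla\tilde v}\,dx$,
\[
 \dual{\A(u,v)}{(\tilde u,\tilde v)} = \int_{\Omega}\nabla v\cdot\overline{\nabla\tilde u}\,dx - \int_{\Omega}\nabla u\cdot\overline{\nabla\tilde v}\,dx = -\dual{(u,v)}{\A(\tilde u,\tilde v)} .
\]
In particular $D(\A)$ is dense (it contains $C_c^{\infty}(\Omega)\times C_c^{\infty}(\Omega)$) and $\Re\dual{\A x}{x}=0$ for all $x\in D(\A)$.

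Third, to upgrade skew-symmetry to skew-adjointness it suffices to prove $\mathrm{ran}(I-\A)=\Ho$ and $\mathrm{ran}(I+\A)=\Ho$. Given $(g,h)\in\Ho$, the equation $(I-\A)(u,v)=(g,h)$ unravels to $v=u-g$ and $(I-\Delta_D)u=g+h\in L^2(\Omega)$; the latter has a unique solution $u\in D(\Delta_D)\subseteq H^1_D(\Omega)$, whence $v=u-g\in H^1_D(\Omega)$ and $(u,v)\in D(\A)$. The equation $(I+\A)(u,v)=(g,h)$ is handled identically and again reduces to the invertibility of $I-\Delta_D$. By the standard deficiency-index criterion a skew-symmetric operator with $\mathrm{ran}(1-\A)=\mathrm{ran}(1+\A)=\Ho$ is skew-adjoint; equivalently $\A=iB$ with $B=-i\A$ self-adjoint. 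Stone's theorem (in the form of \cite[Appendix 1, Theorem 2]{LaxPhillips1967}) then produces the unitary $C_0$-group generated by $\A$, which is exactly the assertion (compare also the skew-adjointness statement \cite[Theorem V.1.2]{LaxPhillips1967}).

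I expect the main obstacle to be the rigorous construction of $\Delta_D$ as a self-adjoint operator with the stated domain, together with the verification that $\A$ is closed; this is where the geometry of the exterior domain and the regularity of $\partial\Omega$ genuinely enter. Once $\Delta_D$ is in hand, the remaining steps are purely formal. An alternative to invoking Stone's theorem is to apply the Lumer--Phillips theorem to $\A$ and to $-\A$ separately---both are dissipative since $\Re\dual{\A x}{x}=0$, and the range conditions above supply the surjectivity hypotheses---thereby obtaining a $C_0$-group $(e^{t\A})_{t\in\R}$, and then to observe that $\tfrac{d}{dt}\norm{e^{t\A}x}^2 = 2\Re\dual{\A e^{t\A}x}{e^{t\A}x} = 0$ for $x\in D(\A)$ forces each $e^{t\A}$ to be a surjective isometry, hence unitary.
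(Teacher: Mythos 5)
Your strategy coincides with the paper's: establish that $\A$ is skew-adjoint on $\Ho$ and then invoke Stone's theorem (the paper simply cites \cite[Theorem V.1.2]{LaxPhillips1967} for the skew-adjointness). The skew-symmetry computation is fine, but there is a genuine gap in your construction of $\Delta_D$ and in your range argument, and it comes from a single overlooked fact: $H^1_D(\Omega)$ is here defined as the completion of $C_c^{\infty}(\Omega)$ under the \emph{homogeneous} norm $u\mapsto(\int_\Omega\abs{\nabla u}^2\,dx)^{1/2}$, and for an exterior domain this completion is \emph{not} a subspace of $L^2(\Omega)$. For instance in $d=3$ a smooth function that vanishes on $\partial\Omega$ and equals $\abs{x}^{-1}$ for large $\abs{x}$ has square-integrable gradient and compactly supported smooth Laplacian, so it belongs to $D(\Delta_D)=\{u\in H^1_D(\Omega):\Delta u\in L^2(\Omega)\}$, yet it is not in $L^2(\Omega)$. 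Consequently the form $u\mapsto\int_\Omega\abs{\nabla u}^2$ with form domain $H^1_D(\Omega)$ is not a densely defined form \emph{on $L^2(\Omega)$}, the first representation theorem cannot be applied as you state it, and $\Delta_D$ with the paper's domain is not a self-adjoint operator on $L^2(\Omega)$ (its domain is not even contained in $L^2(\Omega)$). In the range argument the same point kills the reduction: $g\in H^1_D(\Omega)$ need not be in $L^2(\Omega)$, so the assertion $(I-\Delta_D)u=g+h\in L^2(\Omega)$ is false in general and you cannot conclude by "bounded invertibility of $I-\Delta_D$ on $L^2(\Omega)$''. Replacing $H^1_D$ by $H^1_0(\Omega)=H^1(\Omega)\cap H^1_D(\Omega)$ would repair the form method but would also change the norm on the first slot and destroy unitarity; the homogeneous choice is precisely what makes the energy conserved.

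The repair must work directly in $\Ho$ rather than through a self-adjoint realization of $\Delta_D$ on $L^2(\Omega)$. From skew-symmetry $\norm{(I\mp\A)x}^2=\norm{x}^2+\norm{\A x}^2\geq\norm{x}^2$, so once one checks that $\A$ is closed, $I\mp\A$ has closed range and it suffices to prove density. If $(g,h)\in\Ho$ is orthogonal to $\operatorname{ran}(I-\A)$, testing against $(0,v)\in D(\A)$ for $v\in H^1_D(\Omega)\cap L^2(\Omega)$ gives $\int_\Omega\nabla v\cdot\overline{\nabla g}\,dx=\int_\Omega v\,\overline{h}\,dx$ for all such $v$, hence $\Delta g=-h\in L^2(\Omega)$ distributionally and so $g\in D(\Delta_D)$; testing against $(g,0)\in D(\A)$ then yields $\norm{\nabla g}_{L^2}^2+\norm{h}_{L^2}^2=0$, so $(g,h)=0$. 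The same argument with $I+\A$ finishes the job. This is the substance of the Lax--Phillips theorem the paper cites; as written, your proof sidesteps the real difficulty by treating $\Delta_D$ as an operator on $L^2(\Omega)$, which it is not in this homogeneous setting.
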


  Let $m\in\N_0$. We are interested in the uniform decay rate of the local energy with respect to sufficiently smooth initial data, compactly supported in the ball of radius $r$:
  \begin{equation}\label{eq: pm}
   p_m(t) := \sup \left\{ \left( \frac{E^{loc}(\xnice(t))}{\norm{\xnice_0}^2_{H^{m+1}\times H^m}} \right)^{\frac{1}{2}} ; \xnice_0\in H^{m+1}_{\text{comp}}\times H^m_{\text{comp}}(\overline{\Omega}\cap B_r) \right\}.
  \end{equation}
  Here by $H^m_{\text{comp}}(\overline{\Omega}\cap B_r)$ we denote all square-integrable functions, supported on $\overline{\Omega}\cap B_r$ for which all weak derivatives up to order $m$ are square-integrable too. It is well known that $p_0$ either does not decay to zero, or decays exponentially for $d$ odd and as $t^{-d}$ for $d$ even. Moreover the decay can be characterized by boundedness of the local resolvent of $\A$ on the imaginary axis. We refer to \cite{Vodev1999} and references therein for these facts. 
  
  In the following we assume $m\in\N_1$. Following the philosophy of the present article we see that we have to investigate the resolvent of $\A$. In the literature on local energy decay it is common to investigate the \emph{outgoing resolvent} of the stationary wave equation. For $\Re z>0$ and $f\in L^2(\Omega)$ the outgoing resolvent is defined as a Laplace transform:
  \begin{align*}
   R(z)f = \int_0^{\infty} e^{-zt} u(t) dt
  \end{align*}
  where $u$ is the first component of the solution to (\ref{eq: exterior CP}) for $x_0=(0,f)$. It is not difficult to show that $w=R(z)f$ then satisfies the stationary wave equation

  \begin{equation}\label{eq: stationary exterior wave equation}
   \left\{
   \begin{array}{lr}
    z^2 w(x) - \Delta w(x) = f(x) &  (x\in\Omega), \\
    w(x) = 0 & (x\in\partial\Omega). \\
    \end{array}
   \right.
  \end{equation}

  There is an important relation between this operator and the resolvent of $\A$: For $\Re z > 0$ we have
  \begin{equation}\label{eq: resolvents R vs A}
      (z-\A)^{-1} = \left(
   \begin{array}{cc}
    zR(z)       & R(z)   \\
    z^2R(z)-1   & zR(z)  \\
   \end{array}\right) .
  \end{equation}
  Let us fix a cut-off function $\chi\in C_c^{\infty}(\R^d)$ with $0\leq \chi \leq 1$ such that $\chi=1$ on a neighbourhood of $K$. We define the truncated resolvent by $R_{\chi}(z)=\chi R(z) \chi$ where we consider $\chi$ as a multiplication operator on $L^2(\Omega)$. From the definition we see that the outgoing truncated resolvent is an analytic function in the interior of $\C_+$. The next proposition illuminates its behaviour on the other half of the complex plane.
  \begin{proposition}\label{prop: Burq Appendix B}
   (i)\text{\cite[Appendix B]{Burq1998}} The truncated outgoing resolvent $R_{\chi}$ extends analytically across $i\R\backslash\{0\}$. Moreover, for any $\varepsilon>0$ $R_{\chi}:L^2(\Omega)\rightarrow L^2(\Omega)$ is bounded in a small neighbourhood of $0$ intersected with a sector $\{ z\in\C\backslash\{0\}; \abs{\arg z - \pi} \geq \varepsilon \}$. (ii)\cite[Corollary V.3.3 together with Remark V.4.3]{LaxPhillips1967} If the dimension $d$ is odd $R_{\chi}$ extends meromorphically to $\C$.
  \end{proposition}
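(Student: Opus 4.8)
The plan is to reduce everything to the \emph{free} outgoing resolvent $R_0(z)=(-\Delta+z^2)^{-1}$ on $\R^d$ and then transfer to the exterior domain by a standard gluing construction combined with analytic Fredholm theory. First I would recall the explicit kernel of $R_0(z)$: it is a function of $z\abs{x-y}$ built from modified Bessel functions, which for odd $d$ reduces to $e^{-z\abs{x-y}}$ times a polynomial in $1/(z\abs{x-y})$. Cutting off on both sides with $\chi$ confines $\abs{x-y}$ to a fixed bounded set, so one reads off at once that the family $z\mapsto\chi R_0(z)\chi$, a priori defined for $\Re z>0$, extends holomorphically across $i\R\setminus\{0\}$ as a family of bounded operators on $L^2$; for odd $d$ it in fact extends to an \emph{entire} operator-valued function, whereas for even $d$ it has a logarithmic branch point at the origin. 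A separate inspection of the expansion of $\chi R_0(z)\chi$ as $z\to0$, carried out inside the sector $\abs{\arg z-\pi}\geq\varepsilon$ which in even dimensions stays clear of the branch cut emanating from $0$, gives the local boundedness near $0$ asserted in~(i).

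For the passage to $\Omega$ I would glue $R_0(z)$, used away from the obstacle $K$, to the resolvent of an auxiliary Dirichlet problem on a large ball containing $K$, used near $K$; with suitable cut-offs this produces a parametrix $\mathcal E(z)$ such that $(-\Delta+z^2)\mathcal E(z)=I+\mathcal K(z)$, where $\mathcal K(z)$ is compactly supported in $x$, compact on $L^2(\Omega)$, and inherits the holomorphic (respectively entire) dependence on $z$ of $\chi R_0(z)\chi$. By the analytic Fredholm theorem $(I+\mathcal K(z))^{-1}$ extends meromorphically to the same region, hence so does $R(z)=\mathcal E(z)(I+\mathcal K(z))^{-1}$ after truncation by $\chi$. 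This immediately yields the meromorphic continuation of $R_\chi$ to all of $\C$ for odd $d$, that is~(ii), and at least the meromorphic continuation across $i\R\setminus\{0\}$ required for~(i).

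The essential step, and the one I expect to be the real obstacle, is to exclude \emph{poles} of $(I+\mathcal K(z))^{-1}$ on the punctured imaginary axis. A pole at $z=is$ with $s\neq0$ would produce a nonzero $w\in H^1_{\mathrm{loc}}(\Omega)$ solving $(-\Delta+z^2)w=0$ in $\Omega$ with $w|_{\partial\Omega}=0$ and satisfying the outgoing radiation condition inherited from $R_0$; since $\Re z=0$ this is the Helmholtz equation outside $B_r$ together with the Sommerfeld condition, so Rellich's uniqueness theorem forces $w\equiv0$ outside $B_r$, and unique continuation for $-\Delta$ then gives $w\equiv0$ on all of $\Omega$, a contradiction. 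Combining this with the $z\to0$ analysis above — together with invertibility of $I+\mathcal K(0)$ on the truncated space, which encodes the absence of a zero-energy resonance for the exterior Dirichlet problem — yields the analytic extension and local boundedness claimed in~(i). All of this is carried out in detail in \cite[Appendix B]{Burq1998} for~(i) and in the Lax--Phillips monograph \cite{LaxPhillips1967} for~(ii), so the paper simply quotes the result.
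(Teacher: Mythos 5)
The paper gives no proof of this Proposition: both parts are stated as citations, to \cite[Appendix B]{Burq1998} for (i) and to \cite[Corollary V.3.3, Remark V.4.3]{LaxPhillips1967} for (ii), and there is nothing in the text to compare a proof against. Your sketch is, as a mathematical argument, essentially sound and corresponds to the standard modern route to these facts: write a parametrix $\mathcal E(z)$ by gluing the cut-off free resolvent $\chi R_0(z)\chi$ (whose explicit Bessel-type kernel gives the entire extension for odd $d$ and the logarithmic branch point at $0$ for even $d$) to a compactly supported interior resolvent, apply analytic Fredholm theory to $(I+\mathcal K(z))^{-1}$, and then exclude poles on $i\R\setminus\{0\}$ by Rellich's uniqueness theorem plus unique continuation, with the sector condition $\abs{\arg z-\pi}\geq\varepsilon$ handling the behaviour near the origin. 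You correctly identify the exclusion of real poles as the genuinely scattering-theoretic step.

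One remark on faithfulness to the cited sources: what you describe is closer to the ``black box'' or Vainberg-style continuation than to the actual Lax--Phillips proof of (ii). Lax and Phillips obtain meromorphic continuation for odd $d$ not via a gluing parametrix and analytic Fredholm, but via the translation representation and the eventual compactness of the semigroup $Z(t)=P_+U(t)P_-$ on the orthogonal complement of the incoming and outgoing subspaces; the poles of the truncated resolvent are then read off from the spectrum of the generator of $Z$. Both routes establish the same result, and yours is arguably the more elementary and self-contained of the two, but it is not literally the argument of \cite[Corollary V.3.3]{LaxPhillips1967}. For (i), Burq's Appendix B does proceed in the spirit you describe, with particular care near $z=0$ (where the parametrix degenerates and the sector restriction becomes necessary); your sketch compresses that low-frequency analysis into a single sentence, which is the one place where the actual reference does significantly more work than your outline suggests.
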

  In the following we want to restrict our considerations to the odd-dimensional (i.e. $d$ is odd) case only. By Proposition \ref{prop: Burq Appendix B} together with (\ref{eq: resolvents R vs A}) we immediately see that $(z\mapsto\chi(z-\A)^{-1}\chi)$ for $\Re z>0$ extends to a meromorphic function $G_{\chi}$ on $\C$ which has no poles on $i\R$. Here we consider $\chi$ as an operator on $\Ho$ acting as $\chi(u_0,u_1)=(\chi u_0, \chi u_1)$. Since the spectrum of $\A$ is the entire imaginary axis the equality $G_{\chi}(z)= \chi(z-\A)^{-1}\chi$ \emph{does not hold} for $\Re z<0$ in general. 
  
  The following proposition is well-known in the literature on exterior wave equations. The proof is not difficult but rather lengthy. Unfortunately we could not find a proof in the literature but we refer to \cite[Section]{BurqHitrik2007} for a similar statement and the idea of the proof.  

  \begin{proposition}\label{prop: resolvent A vs R}
   Let $\delta>0$ and let $\tilde{\chi}$ be defined as $\chi$ but with $\tilde{\chi}=1$ on the support of $\chi$. Let $z$ with $-\delta<\Re z < 0$ be no pole of $R_{\chi}$, then
   \begin{equation}\nonumber
    \norm{G_\chi(z)} \leq C (1+\abs{\Im z}) \norm{R_{\tilde{\chi}}(z)}_{L^2\rightarrow L^2}
   \end{equation}
   holds with a constant $C>0$ independent of $z$. The reverse inequality - with a different constant and $\tilde{\chi}$ replaced by $\chi$ - is also true.
  \end{proposition}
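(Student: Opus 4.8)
The plan is to use the explicit relation (\ref{eq: resolvents R vs A}) to turn the statement about the ``vectorial'' resolvent of $\A$ into estimates for the truncated \emph{scalar} outgoing resolvent, and then to control the loss of derivatives in the latter by comparison with the free resolvent on $\R^d$. First I would record the $2\times2$-matrix form of $G_\chi$: for $\Re z>0$ the cutoff $\chi$ acts diagonally on $\Ho$, so substituting (\ref{eq: resolvents R vs A}) gives
\[
 G_\chi(z)=\chi(z-\A)^{-1}\chi=\begin{pmatrix} zR_\chi(z) & R_\chi(z) \\ z^2R_\chi(z)-\chi^2 & zR_\chi(z)\end{pmatrix},\qquad R_\chi(z)=\chi R(z)\chi.
\]
Both sides extend meromorphically to $\C$ — the left-hand side by the discussion preceding the proposition, the right-hand side because $R_\chi$ does by Proposition \ref{prop: Burq Appendix B}(ii) — so the identity persists at every $z$ which is not a pole of $R_\chi$, in particular on $-\delta<\Re z<0$. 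It therefore suffices to bound the four blocks as operators between the corresponding factors of $\Ho=H^1_D(\Omega)\times L^2(\Omega)$.

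Two of these are immediate. Nesting cutoffs, $\chi R(z)\chi=\chi\,(\tilde\chi R(z)\tilde\chi)\,\chi$ with $0\le\chi\le1$ gives $\norm{R_\chi(z)}_{L^2\to L^2}\le\norm{R_{\tilde\chi}(z)}$, which disposes of the lower‑right block with the bound $\abs{z}\norm{R_{\tilde\chi}(z)}$; it also yields the reverse inequality, since applying the matrix identity to $(0,g)$ gives $G_\chi(z)(0,g)=(R_\chi(z)g,zR_\chi(z)g)$, and because $R_\chi(z)g$ lies in $H^1_0$ of a fixed bounded set, Poincar\'e's inequality gives $\norm{R_\chi(z)g}_{L^2}\le C\norm{R_\chi(z)g}_{H^1_D}\le C\norm{G_\chi(z)}\norm{g}_{L^2}$ (no factor $1+\abs{\Im z}$, $\tilde\chi$ replaced by $\chi$). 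The three remaining blocks carry one net derivative relative to the $L^2\to L^2$ situation because $H^1_D$ replaces $L^2$ on the first factor, and for them one needs scalar estimates of the form
\[
 \norm{\chi\nabla R(z)\chi}_{L^2\to L^2}+\norm{\chi\big(z^2R(z)-1\big)\chi}_{H^1_D\to L^2}\le C(1+\abs{z})\norm{R_{\tilde\chi}(z)}+C,
\]
together with $\norm{R_\chi(z)}_{H^1_D\to H^1_D}\le C\norm{R_{\tilde\chi}(z)}+C(1+\abs{z})^{-1}$ for the upper‑left block. Granting these, summing the four block bounds and absorbing the additive constants — possible because $-\delta<\Re z<0$ forces $\norm{R_{\tilde\chi}(z)}\ge c(1+\abs{z})^{-1}$ — gives $\norm{G_\chi(z)}\le C(1+\abs{\Im z})\norm{R_{\tilde\chi}(z)}$.

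The scalar estimates are the heart of the matter, and I would obtain them by the standard gluing/commutator argument comparing $R(z)$ with the free outgoing resolvent $R_0(z)$ on $\R^d$ — which, $d$ being odd, also continues meromorphically to $\C$ — in the spirit of \cite{BurqHitrik2007}. Choosing cutoffs $\chi\prec\rho\prec\tilde\chi$ supported near the obstacle and writing $R(z)=(1-\rho)R_0(z)+\rho R_{\mathrm{int}}(z)-R(z)[\Delta,\rho]R_0(z)-\dots$, the operator $\chi R(z)\chi$ becomes $\chi R_0(z)\chi$ plus terms in which $R(z)$ is only ever applied to data supported near the obstacle — hence controlled by $\norm{R_{\tilde\chi}(z)}$ — sandwiched between first‑order, compactly supported differential operators and copies of $R_0(z)$. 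One then inserts the classical free‑resolvent bounds, valid on the strip $\Re z\ge-\delta$ for all $\Im z$, namely $\norm{\chi R_0(z)\chi}_{L^2\to L^2}\le C(1+\abs{z})^{-1}$ and $\norm{\chi\nabla R_0(z)\chi}_{L^2\to L^2}\le C$ (and their duals), which encode precisely that passing one derivative through the resolvent costs one power of $1+\abs{z}$. For the block $z^2R_\chi(z)-\chi^2=\chi\big(\Delta R(z)\big)\chi$ one uses in addition the elementary energy identity obtained, with $w=R(z)\chi f$ and $(z^2-\Delta)w=\chi f$, by testing against $\overline{\chi^2 w}$, which gives $\norm{\chi\nabla w}_{L^2}\le C(1+\abs{z})\norm{\tilde\chi w}_{L^2}+C\norm{\chi f}_{L^2}$; combined with an integration by parts this converts the apparent $\abs{z}^2$‑loss of $\Delta R(z)$ into a single power of $\abs{z}$, the saved power being exactly the $H^1_D$‑regularity of $f$. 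Alternatively this block follows by duality from the estimate for $\chi\nabla R(z)\chi$, using $R(z)^*=R(\bar z)$ and the symmetry $\norm{R_{\tilde\chi}(\bar z)}=\norm{R_{\tilde\chi}(z)}$.

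The main obstacle is exactly this control of the derivative loss: a na\"ive argument via elliptic regularity, which carries $z^2w$ on the right‑hand side, only yields a factor $(1+\abs{z})^2$, so one is forced to exploit the oscillatory structure of the stationary equation (the sharp free‑resolvent bounds on the strip) rather than plain energy/elliptic estimates, and to keep precise track of where the $H^1_D$‑regularity of the first component of the datum is spent, most delicately in the $z^2R_\chi(z)-\chi^2$ block. A secondary, purely bookkeeping difficulty is that on $-\delta<\Re z<0$ one must work throughout with the meromorphic continuations of $R(z)$, $R_0(z)$ and their adjoints, verify the conjugation symmetry used in the dual estimates, and check the lower bound $\norm{R_{\tilde\chi}(z)}\ge c(1+\abs{z})^{-1}$ needed to absorb the additive constants.
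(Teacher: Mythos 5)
The paper gives no proof of Proposition~\ref{prop: resolvent A vs R}; it only remarks that the argument is ``not difficult but rather lengthy'' and refers to \cite{BurqHitrik2007} for the idea. So there is nothing in the paper to compare against line by line, and I am assessing your sketch on its own merits.

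Your overall strategy --- writing $G_\chi(z)$ in the $2\times2$ block form coming from (\ref{eq: resolvents R vs A}), persisting that identity to $-\delta<\Re z<0$ by unique continuation of the meromorphic extensions, and reducing to four scalar estimates for the truncated outgoing resolvent --- is the standard route and is in the spirit of the reference the paper cites. You correctly identify the actual difficulty: a na\"ive elliptic estimate loses two powers of $z$, and one must instead use the stationary equation (energy identity with a nested cutoff, together with free--resolvent bounds on the strip and the gluing decomposition $R=E-R\cdot\mathrm{error}$) to spend the $H^1_D$-regularity of the first datum exactly once. The block-by-block targets you write down are the right ones, and the lower-right block plus the absorption of the additive constants (using $\norm{R_{\tilde\chi}(z)}\gtrsim(1+\abs{z})^{-1}$ on the strip) are handled correctly.

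There is, however, a genuine gap in your treatment of the \emph{reverse} inequality. The Proposition asserts $\norm{G_\chi(z)}\geq c(1+\abs{\Im z})\,\norm{R_\chi(z)}_{L^2\to L^2}$, and you only derive $\norm{G_\chi(z)}\geq c\,\norm{R_\chi(z)}$, explicitly noting ``no factor $1+\abs{\Im z}$''. This is weaker than what is claimed. The missing factor comes for free from the \emph{second} component of $G_\chi(z)(0,g)=(R_\chi(z)g,\,zR_\chi(z)g)$: one has $\abs{z}\,\norm{R_\chi(z)}\leq\norm{G_\chi(z)}$, and combining this with the Poincar\'e bound $\norm{R_\chi(z)}\leq C\norm{G_\chi(z)}$ on the strip gives $\max\{c,\abs{z}\}\,\norm{R_\chi(z)}\leq C\norm{G_\chi(z)}$, hence the $(1+\abs{\Im z})$ factor. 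You should add this line. Beyond that, the four scalar estimates --- in particular the $H^1_D\to H^1_D$ bound $\norm{R_\chi(z)}_{H^1_D\to H^1_D}\leq C\norm{R_{\tilde\chi}(z)}+C(1+\abs{z})^{-1}$ --- are only asserted as ``granting these'', and a complete proof would have to carry out the gluing bookkeeping and the energy identity carefully (including the analytic continuation of all operators to $\Re z<0$, which you do flag). These are not errors, but they constitute the ``lengthy'' part the paper alludes to, and a referee would expect the detail.
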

  
  It can happen that a whole strip $\{ z\in\C ; -\delta < \Re z < 0 \}$ is free of poles - see for instance \cite{Ikawa1988}. In \cite{BonyPetkov2006} the impact of the presence of such a strip on local energy decay was studied. There it was shown in a first step that such a strip implies that the norm of $G_{\chi}$ can be estimated by $C\exp(C\abs{\Im(z)}^{\alpha})$ on this strip for some $\alpha\geq1$. Indeed $\alpha=d-1$ in this article but it was not shown that this is optimal. In a second step the authors showed that this implies a bound of the form $(1+\abs{\Im z})^{\alpha}$ on $G_{\chi}$ in a region of the form $\{ z\in\C; -C(1+\abs{\Im z})^{-\alpha} < \Re z < 0 \}$. Finally in a third step they applied a Tauberain theorem (more precisely \cite[Proposition 1.4]{PopovVodev1999}) to get a $(\log(t)/t)^{1/\alpha}$ decay rate. 
  
  However, given a polynomial bound on the resolvent it would be desirable to have a polynomial decay of the local energy - \emph{without the logarithmic loss}! If we were not in a local situation then the results of \cite{BorichevTomilov2010} would help us to deduce our desired result without the logarithmic loss. Unfortunately it is not known whether \cite[Theorem 2.4]{BorichevTomilov2010} generalizes to local decay of semigroups on Hilbert spaces. In the following we show that with the help of Corollary \ref{cor: M_K theorem - Laplace variant} we get rid of the logarithmic loss. It even simplifies the proof in the sense that the second step is not necessary anymore since our preconditions in Corollary \ref{cor: M_K theorem - Laplace variant} are fulfilled by the local resolvent on the strip.

  By the preceding discussion it is reasonable to assume from now on the following conditions to be satisfied:
  \begin{enumerate}
   \item[(i)] There is a $\delta>0$ such that $R_{\chi}$ has no poles in $S_{\delta}=\{ z\in\C ; -\delta < \Re z < 0 \}$.
   \item[(ii)] There is a continuous and increasing function $\Mtilde:\R_+\rightarrow[2,\infty)$ satisfying $\Mtilde(s)\geq c\log(2+s)$ for any $s\geq0$ such that $\abs{\Im z} \norm{R_{\chi}(z)}_{L^2\rightarrow L^2} \leq C \exp(C \Mtilde(\abs{\Im z}))$ holds for all $z\in S_{\delta}$.
  \end{enumerate}
  Under these assumptions we can prove:
  \begin{theorem}
   Let $d$ be odd and let (i) and (ii) above be satisfied. Let $m\in\N_1$. Then
   \begin{equation}\nonumber
    p_m(t) \leq \frac{C}{\Mtilde^{-1}(c_1 t)^m}
   \end{equation}
   holds for a sufficiently small constant $c_1$ and a sufficiently large constant $C$. Here $\Mtilde^{-1}$ denotes the right-continuous right-inverse of $\Mtilde$.
  \end{theorem}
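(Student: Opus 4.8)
The plan is to deduce the theorem from Corollary~\ref{cor: M_K theorem - Laplace variant}, applied to the unitary group $T$ generated by $\A$ on the energy space $\Ho$ with $P_1=P_2=\chi$; here I additionally require of the cutoff that $\chi\equiv1$ on a neighbourhood of $\overline{B_r}$ (this changes nothing in hypotheses~(i),(ii), which are insensitive to the particular cutoff). Since $T$ is unitary, $\omega_0(T)=0$, and for $p=\infty$ the orbit $t\mapsto\norm{\chi T(t)\chi x}_{\Ho}$ is bounded; fix $\omega=1$. The functions playing the roles of $M$ and $K$ in the corollary will be a \emph{constant} $M\equiv M_0$ and $K(s)=C_0\exp(C_0\Mtilde(s))$ with $M_0,C_0$ large constants to be chosen --- exactly the regime ($M$ constant, $K$ of exponential type) for which Theorem~\ref{thm: M_K theorem - Laplace variant} was designed.

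First I would establish the growth bound on a strip. By Proposition~\ref{prop: Burq Appendix B}(ii) and the discussion after it, $G_{\chi}(z):=\chi(z-\A)^{-1}\chi$ continues from $\C_+$ to a meromorphic function on $\C$ with no poles on $i\R$, and by hypothesis~(i) it has no poles in the open strip $S_{\delta}$; hence, with $M_0=\max\{2,2/\delta\}$, $G_{\chi}$ is holomorphic on a neighbourhood of $\overline{\Omega_{M_0}}$. Combining Proposition~\ref{prop: resolvent A vs R} with hypothesis~(ii) --- which holds equally with $\chi$ replaced by the larger cutoff $\tilde\chi$, the cutoff-insensitivity of such bounds being classical --- one gets for $z\in S_{\delta}$ with $\abs{\Im z}\ge1$
\[
 \norm{G_{\chi}(z)}\le C(1+\abs{\Im z})\norm{R_{\tilde\chi}(z)}_{L^2\to L^2}\le C\exp\big(C\Mtilde(\abs{\Im z})\big),
\]
while on the compact set $\overline{\Omega_{M_0}}\cap\{\abs{\Im z}\le1\}$ the norm $\norm{G_{\chi}}$ is bounded by holomorphy. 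Using $\Mtilde(s)\ge c\log(2+s)$ one then fixes $C_0$ large enough that $K(s)=C_0\exp(C_0\Mtilde(s))$ is continuous, increasing, $[2,\infty)$-valued, satisfies $K(s)\ge\max\{s,M_0\}$ for $s>1$, and dominates $\norm{G_{\chi}(z)}$ on $\Omega_{M_0}$; condition~(ii) of the corollary then reduces to $\Mtilde(s)=O(e^{(M_0 s)^{1-\varepsilon}})$, which holds in every case of interest (in particular for $\Mtilde$ polynomial, as in the estimates recalled above), and I assume it.

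Next I would pass to the local energy. By continuity of $\xnice_0\mapsto E^{loc}(T(t)\xnice_0)^{1/2}$ on $\Ho\supset H^{m+1}\times H^m$ it suffices to prove the bound for $\xnice_0$ in a dense subset of the unit ball of $H^{m+1}_{\text{comp}}\times H^{m}_{\text{comp}}(\overline{\Omega}\cap B_r)$, which one may take inside $D(\A^m)$. For such $\xnice_0$ put $y:=(\omega-\A)^m\xnice_0\in\Ho$; since $\omega-\A$ is a local differential operator, $y$ is still supported in $\overline{\Omega}\cap\overline{B_r}$, so $\chi y=y$, and since $(\omega-\A)^m$ maps $H^{m+1}\times H^m$ boundedly into $\Ho$ we have $\norm{y}_{\Ho}\le C$ uniformly. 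Because $\chi\equiv1$ near $\overline{B_r}$,
\[
 E^{loc}(\xnice(t))^{1/2}\le\norm{\chi\,T(t)\xnice_0}_{\Ho}=\norm{\chi\,T(t)(\omega-\A)^{-m}\chi\,y}_{\Ho},
\]
and the right-hand side is $\norm{f(t)}$ for the function $f$ appearing in the proof of Corollary~\ref{cor: M_K theorem - Laplace variant} with $x=y$. From the formula for $\fhat$ given there, the bound $\abs{\omega-z}\ge1$ on $\Omega_{M_0}$, and the growth bound above, this $f$ satisfies the hypotheses of Theorem~\ref{thm: M_K theorem - Laplace variant} with the \emph{same} pair $(M_0,K)$ for all $y$ with $\norm{y}_{\Ho}\le C$, and $\norm{f^{(m)}}_{L^\infty}\le C$; since the constants in the proof of Theorem~\ref{thm: M_K theorem - Laplace variant} depend on $f$ only through $\norm{f^{(m)}}_{L^p}$ and through the fixed $(M,K)$, Corollary~\ref{cor: M_K theorem - Laplace variant} gives $E^{loc}(\xnice(t))^{1/2}\le C\,w_{M_K}(t)^{-m}$ with $C$ independent of $\xnice_0$. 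Finally, as $M_0$ is constant, $M_K(s)=M_0C_0\Mtilde(s)+M_0\log C_0\le C_1\Mtilde(s)$ for large $s$, whence $w_{M_K}(t)=M_K^{-1}(t)\ge\Mtilde^{-1}(t/C_1)$ for large $t$ (the small-$t$ range being trivial since $p_m$ is bounded); taking the supremum over $\xnice_0$ and putting $c_1=1/C_1$ yields $p_m(t)\le C\,\Mtilde^{-1}(c_1 t)^{-m}$.

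I expect the main obstacle to be the uniformity used in the previous step: Corollary~\ref{cor: M_K theorem - Laplace variant} is stated for a single function, whereas $p_m(t)$ is a supremum over a whole ball of initial data, so one must verify that the constants produced in the proof of Theorem~\ref{thm: M_K theorem - Laplace variant} are controlled by $\norm{f^{(m)}}_{L^p}$ together with the \emph{fixed} pair $(M,K)$ from the growth hypothesis --- which is what forces the normalisation and the observation that $(\omega-\A)^m$ keeps the data inside $B_r$. The remaining points --- interchangeability of $\chi$ and $\tilde\chi$ in (i),(ii), verifying condition~(ii) of the corollary for $K$ of exponential type, and the elliptic-regularity equivalence of the graph norm of $\A^m$ with $\norm{\cdot}_{H^{m+1}\times H^m}$ on data satisfying the natural boundary conditions --- are routine.
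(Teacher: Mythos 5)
Your proof follows the same overall strategy as the paper's: establish the exponential growth bound on $G_\chi$ on the strip $S_\delta$ via Proposition~\ref{prop: resolvent A vs R} and hypotheses (i), (ii); apply Corollary~\ref{cor: M_K theorem - Laplace variant} with a constant $M$ and $K$ of the form $C\exp(C\Mtilde)$; and pass to the $p_m$-bound via elliptic regularity for the Dirichlet Laplacian on $\Omega\cap B_r$. The two places where care is needed are handled differently, though. For the uniformity of the constant over a ball of initial data, the paper invokes the closed graph theorem, while you trace the constants through the proof of Theorem~\ref{thm: M_K theorem - Laplace variant} and observe they depend on $f$ only through $\norm{f^{(m)}}_{L^p}$ once $(M,K)$ is fixed; both work, yours being the more explicit. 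For reducing the local energy to the operator bound, the paper inserts a second cutoff $\chi_1$ and estimates the commutator $[\chi,1-\A]$, whereas you arrange $\chi\equiv1$ on a neighbourhood of $\overline{B_r}$ and use that $(\omega-\A)^m$ is a local differential operator, so $y=(\omega-\A)^m\xnice_0$ remains supported in $\overline{B_r}$ and $\chi y=y$ with no commutator correction. Your route is cleaner; indeed, with your stronger normalization of $\chi$ the paper's commutator term vanishes identically, so the two devices do the same work. Finally, you correctly flag that condition~(ii) of Corollary~\ref{cor: M_K theorem - Laplace variant} imposes an upper bound on the growth of $\Mtilde$, namely $\Mtilde(s)=O\bigl(e^{(Cs)^{1-\varepsilon}}\bigr)$ for some $\varepsilon>0$, which the paper does not state explicitly but tacitly assumes; this is satisfied whenever $\Mtilde$ grows polynomially or sub-exponentially, covering the intended applications, and you are right to make it an explicit hypothesis.
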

  \begin{proof}
   For $\Re z > 0$ let $G_{\chi}(z)=\chi (z-\A)^{-1} \chi$. Assumptions (i) and (ii) together with Proposition \ref{prop: resolvent A vs R} imply that $G_{\chi}$ extends analytically to $S_{\delta}\cup\overline{\C_+}$ and satisfies
   \begin{equation}\nonumber
    \norm{G_{\chi}(z)} \leq C \exp( C \Mtilde(\abs{\Im z}) ) \text{ for } z\in S_{\delta} .
   \end{equation}
   Thus by Corollary \ref{cor: M_K theorem - Laplace variant} (put $M=\delta$ and $K=C\exp\circ(C\Mtilde)$) we get (uniformly in $\xnice_0\in\Ho$)
   \begin{equation}
    \norm{\chi e^{t\A} (1-\A)^{-m} \chi \xnice_0} \leq \frac{C}{\Mtilde^{-1}_r(c_1 t)^m} \norm{\xnice_0} .
   \end{equation}
   The uniformity in $\Ho$ follows from the closed graph theorem. For simplicity we assume $m=1$ in the following. The general case can be treated almost the same way.
   
   Let $\chi_1\in C_c^{\infty}(\R^d)$ be a function such that $0\leq \chi_1 \leq 1$ and $\chi_1=1$ on $\spt \chi$. Of course Propositions \ref{prop: Burq Appendix B} and \ref{prop: resolvent A vs R} remain valid if one replaces $\chi$ by $\chi_1$. Note that the commutator $[\chi,1-\A]$ is a bounded operator on $\Ho$. Let $\xnice_1=(1-\A)^{-1}\xnice_0\in D(\A)$. Observe
   \begin{align*}
    \norm{\chi e^{t\A} \chi \xnice_1} 
    &\leq \norm{\chi e^{t\A} (1-\A)^{-1} \chi \xnice_0} + \norm{\chi (\chi_1 e^{t\A} (1-\A)^{-1} \chi_1) [\chi,(1-\A)] \xnice_1} \\
    &\leq \frac{C}{\Mtilde^{-1}_r(c_1 t)} (\norm{\xnice_0} + \norm{\xnice_1}) \\ 
    &\leq \frac{C}{\Mtilde^{-1}_r(c_1 t)} \norm{\xnice_1}_{D(\A)} .
   \end{align*}
  Without loss of generality we may assume that $\chi=1$ on $B_r$. Observe that the norm of elements of $D(\A)$, supported in $\overline{\Omega}\cap B_r$, is equivalent to the norm in the space $H^2\times H^1(\Omega)$. This follows from maximal regularity of the Dirichlet-Laplace operator on the bounded and smooth domain $\Omega\cap B_r$. Thus the last inequality (restricted to those $\xnice_1$ with support in $B_r$) implies the conclusion of the theorem.
  \end{proof}

 

 \subsection*{Acknowledgements} I am most grateful to Ralph Chill and Yuri Tomilov for valuable discussions on the topic of this article. I would like to thank the department of mathematics of the Nicolaus Copernicus University in Toru\'n for its hospitality. The idea to work on this topic came to me during a visit in december 2016.


\bibliographystyle{plainnat}
\bibliography{Refs}

\begin{thebibliography}{25}
\providecommand{\natexlab}[1]{#1}
\providecommand{\url}[1]{\texttt{#1}}
\expandafter\ifx\csname urlstyle\endcsname\relax
  \providecommand{\doi}[1]{doi: #1}\else
  \providecommand{\doi}{doi: \begingroup \urlstyle{rm}\Url}\fi

\bibitem[{Arendt} et~al.(2011){Arendt}, {Batty}, {Hieber}, and
  {Neubrander}]{ArendtBattyHieberNeubrander}
Wolfgang {Arendt}, Charles~J.K. {Batty}, Matthias {Hieber}, and Frank
  {Neubrander}.
\newblock \emph{{Vector-valued Laplace transforms and Cauchy problems. 2nd
  ed.}}
\newblock Basel: Birkh\"auser, 2nd ed. edition, 2011.

\bibitem[{B\'atkai} et~al.(2006){B\'atkai}, {Engel}, {Pr\"uss}, and
  {Schnaubelt}]{BatkaiEngelPruessSchnaubelt2006}
Andr\'as {B\'atkai}, Klaus-Jochen {Engel}, Jan {Pr\"uss}, and Roland
  {Schnaubelt}.
\newblock {Polynomial stability of operator semigroups.}
\newblock \emph{{Math. Nachr.}}, 279\penalty0 (13-14):\penalty0 1425--1440,
  2006.

\bibitem[{Batty} and {Duyckaerts}(2008)]{BattyDuyckaerts2008}
Charles {Batty} and Thomas {Duyckaerts}.
\newblock {Non-uniform stability for bounded semi-groups on Banach spaces.}
\newblock \emph{{J. Evol. Equ.}}, 8\penalty0 (4):\penalty0 765--780, 2008.

\bibitem[{Batty} et~al.(2016{\natexlab{a}}){Batty}, {Borichev}, and
  {Tomilov}]{BattyBorichevTomilov2016}
Charles~J.K. {Batty}, Alexander {Borichev}, and Yuri {Tomilov}.
\newblock {$L^{p}$-Tauberian theorems and $L^{p}$-rates for energy decay.}
\newblock \emph{{J. Funct. Anal.}}, 270\penalty0 (3):\penalty0 1153--1201,
  2016{\natexlab{a}}.

\bibitem[{Batty} et~al.(2016{\natexlab{b}}){Batty}, {Chill}, and
  {Tomilov}]{BattyChillTomilov2016}
Charles~J.K. {Batty}, Ralph {Chill}, and Yuri {Tomilov}.
\newblock {Fine scales of decay of operator semigroups.}
\newblock \emph{{J. Eur. Math. Soc. (JEMS)}}, 18\penalty0 (4):\penalty0
  853--929, 2016{\natexlab{b}}.

\bibitem[{Borichev} and {Tomilov}(2010)]{BorichevTomilov2010}
Alexander {Borichev} and Yuri {Tomilov}.
\newblock {Optimal polynomial decay of functions and operator semigroups.}
\newblock \emph{{Math. Ann.}}, 347\penalty0 (2):\penalty0 455--478, 2010.

\bibitem[{Burq}(1998)]{Burq1998}
Nicolas {Burq}.
\newblock {D\'ecroissance de l'\'energie locale de l'\'equation des ondes pour
  le probl\`eme ext\'erieur et absence de r\'esonance au voisinage du r\'eel.}
\newblock \emph{{Acta Math.}}, 180\penalty0 (1):\penalty0 1--29, 1998.

\bibitem[{Burq} and {Hitrik}(2007)]{BurqHitrik2007}
Nicolas {Burq} and Michael {Hitrik}.
\newblock {Energy decay for damped wave equations on partially rectangular
  domains.}
\newblock \emph{{Math. Res. Lett.}}, 14\penalty0 (1):\penalty0 35--47, 2007.

\bibitem[{Chill} and {Seifert}(2016)]{ChillSeifert2016}
Ralph {Chill} and David {Seifert}.
\newblock {Quantified versions of Ingham's theorem.}
\newblock \emph{{Bull. Lond. Math. Soc.}}, 48\penalty0 (3):\penalty0 519--532,
  2016.

\bibitem[{Chill} and {Tomilov}(2007)]{ChillTomilov2007}
Ralph {Chill} and Yuri {Tomilov}.
\newblock {Stability of operator semigroups: ideas and results.}
\newblock In \emph{{Perspectives in operator theory. Papers of the workshop on
  operator theory, Warsaw, Poland, April 19--May 3, 2004}}, pages 71--109.
  Warsaw: Polish Academy of Sciences, Institute of Mathematics, 2007.

\bibitem[{Cohen}(1968)]{Cohen68}
P.J. {Cohen}.
\newblock {A simple proof of the Denjoy-Carleman theorem.}
\newblock \emph{{Am. Math. Mon.}}, 75:\penalty0 26--31, 1968.

\bibitem[cois {Bony} and {Petkov}(2006)]{BonyPetkov2006}
Jean-Fran\c cois {Bony} and Vesselin {Petkov}.
\newblock {Resolvent estimates and local energy decay for hyperbolic
  equations.}
\newblock \emph{{Ann. Univ. Ferrara, Sez. VII, Sci. Mat.}}, 52\penalty0
  (2):\penalty0 233--246, 2006.

\bibitem[{Garnett}(1981)]{Garnett1981}
John~B. {Garnett}.
\newblock {Bounded analytic functions.}
\newblock {Pure and Applied Mathematics, 96. New York etc.: Academic Press, A
  subsidiary of Harcourt Brace Javanovich, Publishers. XVI, 467 p.}, 1981.

\bibitem[{Hartlapp}(2017)]{Hartlapp2017}
M.~{Hartlapp}.
\newblock {A quantified Tauberian Theorem for Laplace Stieltjes transforms.}
\newblock Preprint, 2017.

\bibitem[{H\"ormander}(2003)]{HoermanderI}
Lars {H\"ormander}.
\newblock \emph{{The analysis of linear partial differential operators. I:
  Distribution theory and Fourier analysis. Reprint of the 2nd edition 1990.}}
\newblock Berlin: Springer, reprint of the 2nd edition 1990 edition, 2003.

\bibitem[{Ikawa}(1988)]{Ikawa1988}
Mitsuru {Ikawa}.
\newblock {Decay of solutions of the wave equation in the exterior of several
  convex bodies.}
\newblock \emph{{Ann. Inst. Fourier}}, 38\penalty0 (2):\penalty0 113--146,
  1988.

\bibitem[{Ingham}(1935)]{Ingham1935}
A.E. {Ingham}.
\newblock {On Wiener's method in Tauberian theorems.}
\newblock \emph{{Proc. Lond. Math. Soc. (2)}}, 38:\penalty0 458--480, 1935.

\bibitem[{Korevaar}(2004)]{Korevaar}
Jacob {Korevaar}.
\newblock \emph{{Tauberian theory. A century of developments.}}
\newblock Berlin: Springer, 2004.

\bibitem[{Lax} and {Phillips}(1967)]{LaxPhillips1967}
Peter~D. {Lax} and Ralph~S. {Phillips}.
\newblock {Scattering theory.}
\newblock {Pure and Applied Mathematics (New York) 26. New York, London:
  Academic Press, xii, 276 p.}, 1967.

\bibitem[{Liu} and {Rao}(2005)]{LiuRao2005}
Zhuangyi {Liu} and Bopeng {Rao}.
\newblock {Characterization of polynomial decay rate for the solution of linear
  evolution equation.}
\newblock \emph{{Z. Angew. Math. Phys.}}, 56\penalty0 (4):\penalty0 630--644,
  2005.

\bibitem[{Mart{\'\i}nez}(2011)]{Martinez2011}
Mar{\'\i}a~M. {Mart{\'\i}nez}.
\newblock {Decay estimates of functions through singular extensions of
  vector-valued Laplace transforms.}
\newblock \emph{{J. Math. Anal. Appl.}}, 375\penalty0 (1):\penalty0 196--206,
  2011.

\bibitem[{Popov} and {Vodev}(1999)]{PopovVodev1999}
Georgi {Popov} and Georgi {Vodev}.
\newblock {Distribution of the resonances and local energy decay in the
  transmission problem.}
\newblock \emph{{Asymptotic Anal.}}, 19\penalty0 (3-4):\penalty0 253--265,
  1999.

\bibitem[{Seifert}(2015)]{Seifert2015}
David {Seifert}.
\newblock {A quantified Tauberian theorem for sequences.}
\newblock \emph{{Stud. Math.}}, 227\penalty0 (2):\penalty0 183--192, 2015.

\bibitem[{Seifert}(2016)]{Seifert2016}
David {Seifert}.
\newblock {Rates of decay in the classical Katznelson-Tzafriri theorem.}
\newblock \emph{{J. Anal. Math.}}, 130:\penalty0 329--354, 2016.

\bibitem[{Vodev}(1999)]{Vodev1999}
Georgi {Vodev}.
\newblock {On the uniform decay of the local energy.}
\newblock \emph{{Serdica Math. J.}}, 25\penalty0 (3):\penalty0 191--206, 1999.

\end{thebibliography}

\hfill

Technische Universit\"{a}t Dresden, Fachrichtung Mathematik, Institut f\"{u}r Analysis, 01062, Dresden, Germany. Email: \textit{Reinhard.Stahn@tu-dresden.de}

\end{document}